\let\csname equation*\endcsname\relax
\let\csname endequation*\endcsname\relax
\journal{SIAM}
\begin{document}
\newtheorem{definition}{Definition}[section]
\newtheorem{lemma}{Lemma}[section]
\newtheorem{remark}{Remark}[section]
\newtheorem{theorem}{Theorem}[section]
\newtheorem{proposition}{Proposition}
\newtheorem{assumption}{Assumption}
\newtheorem{example}{Example}
\newtheorem{corollary}{Corollary}[section]
\def\ep{\varepsilon}
\def\Rn{\mathbb{R}^{n}}
\def\Rm{\mathbb{R}^{m}}
\def\E{\mathbb{E}}
\def\hte{\hat\theta}
\renewcommand{\theequation}{\thesection.\arabic{equation}}
\begin{frontmatter}



\title{Modulation and amplitude equations on bounded domains for nonlinear SPDEs driven by cylindrical $\alpha$-stable L\'evy processes}

\author{Shenglan Yuan\fnref{addr1}}\ead{shenglan.yuan@math.uni-augsburg.de}
\author{Dirk Bl$\rm\ddot{o}$mker\corref{cor1}\fnref{addr1}}
\ead{dirk.bloemker@math.uni-augsburg.de}\cortext[cor1]{Corresponding author}

\address[addr1]{\rm Institut f$\rm\ddot{u}$r Mathematik, Universit$\rm\ddot{a}$t Augsburg,
86135, Augsburg, Germany }

\begin{abstract}
In the present work, we establish the approximation of nonlinear stochastic partial differential
equation (SPDE) driven by cylindrical $\alpha$-stable L\'evy processes via modulation or amplitude equations.

We study SPDEs with a cubic nonlinearity, where the deterministic equation
is close to a change of stability of the trivial solution.
The natural separation of time-scales close to this bifurcation allows us to obtain an amplitude equation
describing the essential dynamics of the bifurcating pattern, thus
reducing the original infinite dimensional dynamics to a simpler finite-dimensional effective dynamics.
In the presence of a multiplicative stable L\'evy noise
that preserves the constant trivial solution
we study the impact of noise on the approximation.

In contrast to Gaussian noise, where non-dominant pattern are uniformly small in time due to averaging effects, large jumps in the L\'evy noise might lead to large error terms, and thus new estimates are needed to take this into account.
\end{abstract}

\begin{keyword}
amplitude equations, stochastic partial differential
equations, cylindrical $\alpha$-stable L\'evy processes, slow-fast system, stochastic bifurcation.


\emph{2020 Mathematics Subject Classification}:
60H15, 60H10, 37H20, 35Q56.

\end{keyword}

\end{frontmatter}


\section{Introduction}

Models of modulation  or amplitude equations \cite{BB,BHP,MBK,XKK} have proven to be rather universal and efficient in describing the dynamics associated with a qualitative change of stability (bifurcation). Such structures emerge in fields ranging from spatially and temporarily oscillating wave packets \cite{Osb} to long waves in dispersive media \cite{TT} and spatio-temporal pattern in dissipative systems \cite{SS}. In particular, the Ginzburg-Landau equation plays a prominent role as the effective modulation equation for the description of pattern forming systems close to the first instability since the 1960s; see Newell and Whitehead \cite{NW}. Among these, arguably, the most prototypical one is the Allen-Cahn equation with bistable behavior \cite{ER}, which characterizes interface motion between two stable phases.

The mathematical justification of modulation equation
beyond pure formal calculations
has been started in the early 90th, see for example \cite{Mi92,CE90,KSM92,Schn94a,Schn94c}.
All these results and many later     treated the case of unbounded domains,
as in the bounded doamin case the theory of
center manifolds is available in order to reduce the dynamics,
which does not help in the stochastic case.

Earlier works in the stochastic case
studied almost all the case of Gaussian additive noise,
and starting from \cite{BMS} many articles explored the use of amplitude
approximation in order to qualitatively examine the dynamics of stochastic systems near a change of stability.
The quantitative error estimates are usually done pathwise with high probability  or in moments on the natural slow time-scales close to the bifurcation.
Nevertheless modulation or amplitude equations can also give the approximation in the long-time behavior, for example the approximation of the infinite-dimensional invariant measure for a Swift-Hohenberg equation \cite{BH}.
Also ideas presented in \cite{B07,BH05} can be used
in approximating random attractors or random invariant manifolds via amplitude  equations.

The case of multiplicative Wiener noise is not that well studied, but also here amplitude equations provide insights into the impact of multiplicative noise in SPDEs close to bifurcation; see \cite{DH} for an example and \cite{B07} for general results on scalar one-dimensional noise.

It is worthwhile to note that here we consider our SPDEs on a bounded domain only,
thus leading to a finite dimensional space of dominant patterns that change their stability. In this setting, the amplitude equation
turns out to be a stochastic ordinary differential equation (SDE) describing the amplitude of these dominant modes.
For the case of SPDEs on an unbounded domain,
the effective equation is no longer an SDE and the amplitude of a dominant mode is slowly modulated in space,
thus the reduced model is still an infinite dimensional SPDE.
Nevertheless, we will not focus on this case here.
See \cite{BBS} for the full approximation of Swift-Hohenberg perturbed by space-time white noise on the whole real line, \cite{MBK} in the case of a simple one-dimensional noise, and \cite{BHP} for large domain.

In this paper, we study the following class of stochastic partial differential equations (SPDEs) driven by cylindrical $\alpha$-stable L\'evy process of the following form,
\begin{equation}\label{GinzburgLandau}
du(t)=[\mathcal{A}u(t)+\varepsilon^{2}\mathcal{L}u(t)+\mathcal{F}(u(t))]dt+\varepsilon^{\frac{2}{\alpha}}G(u(t))dL^{\alpha}(t),
\end{equation}
where $\mathcal{A}$ is a non-positive self-adjoint operator with finite-dimensional kernel, $\varepsilon^{2}\mathcal{L}$ represents a small deterministic perturbation with a small parameter $\varepsilon>0$ measuring the distance to bifurcation (the change of stability). The nonlinearity $\mathcal{F}$ stands for a cubic mapping where one standard example is the cubic nonlinearity $-u^3$,
and $G(u)$ denotes a Hilbert-Schmidt operator with $G(0)=0$ so that the constant $u=0$ is a solution to equation (\ref{GinzburgLandau}).
The prototypical case is the multiplication with $u$ combined with a fixed Hilbert-Schmidt operator independent of $u$ that regularizes the noise.
The noise $L^{\alpha}(t)$ is a cylindrical $\alpha$-stable L\'evy process on some stochastic basis with the index $\alpha\in(1,2)$ of stability.
We will give more details on the setting in our assumptions below.

Our aim in the present work is to explore the asymptotic dynamics
in the limit $\varepsilon\rightarrow0$ of solutions $u(t)$ to equation \eqref{GinzburgLandau} on the natural slow time-scale of order $\varepsilon^{-2}$.
Utilizing a separation of time-scales, near a change of stability for the linearized operator $\mathcal{A}+\varepsilon^{2}\mathcal{L}$, the system (\ref{GinzburgLandau}) can be transformed to the slow dynamics where the dominant
pattern is still coupled to the dynamics on a fast time scale.
A reduced equation eliminating the
fast variable and characterizing the behaviour of dominant modes significantly simplifies the
dynamics to an SDE, which we classify as amplitude equation identifying the essential dynamics of
dominant pattern.

The scaling of the $\alpha$-stable noise is chosen in such a way
that it has an impact on the slow time-scale. If we take a larger exponent in the noise strength $\varepsilon^{2/\alpha}$, then we expect the noise to have no impact on the approximation, while for smaller exponents the noise should dominate it and we loose the impact of $\mathcal{L}$.
Another equivalent point of view is, that with a fixed noise strength we have to choose
exactly the right distance from bifurcation in order to see an impact of the small noise on the bifurcation.

The main advantage of $\alpha$-stable noise is that as in the Gaussian case it is a self-similar process that scales in time, so we can rescale equations to the slow time-scale easily.
The disadvantage are the large jumps. These lead to large error terms and we are not able to use uniform error bounds in time as in the Gaussian case.

Previous approximation results via amplitude equations considered mainly square integrable processes or even Gaussians having all moments.
This rules out the interesting  cylindrical $\alpha$-stable  L\'evy process that only has finite $p$th moment for $p\in (0, \alpha)$.
Many  tools developed so far are not suitable to treat cylindrical $\alpha$-stable  L\'evy noises with the loss of the second moment,  such as  Kunita's inequality, Burkholder-Davis-Gundy inequality and Da
Prato-Kwapie\'n-Zabczyk's factorization technique \cite{PZ92,SZ}.
Therefore, we require new and different techniques to explore the cylindrical $\alpha$-stable noise more carefully. A challenging problem in this paper is how to handle the nonlinear terms, where the techniques of stopping times is used frequently
in order to cut-off the nonlinear terms that get too large.
But in connection with large jumps induced by
the cylindrical $\alpha$-stable L\'evy noise this causes many technical problems
which we had to overcome.

Our presentation is structured as follows. In Section 2, we  briefly present the theoretical assumptions and  analysis tools of the estimates for main results. While  Section 3 provides our main results for considering the amplitude equation of equation (\ref{GinzburgLandau}). In Section 4, we analyze examples to illustrate applications of our main results.
Finally, Section 5 summarizes our findings and showcase our conclusions, as well as a number of directions for future study.

%
\section{Assumptions and analysis tools}
%

Throughout the paper, we shall work in a separable Hilbert space $\mathcal{H}$, endowed with the usual
scalar product $\langle\cdot,\cdot\rangle$ and with the corresponding norm $\| \cdot \|$.
For any $\theta\in \mathbb{R}$, by using the domain of definition for fractional powers of the operator $\mathcal{A}$,
\begin{equation*}
\mathcal{H}^{\theta}:=D((1-\mathcal{A})^{\theta}):=\Big\{h=\sum_{k=1}^{\infty}h_{k}e_{k}: h_k=\langle h,e_k\rangle\in \mathbb{R}, \sum_{k=1}^{\infty}(\lambda_{k}+1)^{2\theta}h_{k}^{2}<\infty\Big\},
\end{equation*}
where $e_k$ is an orthonormal basis of eigenfunctions
such that $-\mathcal{A}e_k=\lambda_ke_k$  and
\begin{equation*}
(1-\mathcal{A})^{\theta}h:=\sum_{k=1}^{\infty}(\lambda_{k}+1)^{\theta}h_ke_k,~~~h\in D((1-\mathcal{A})^{\theta}),
\end{equation*}
with the associated norm
\begin{equation*}
\| h\|_{\theta}=\| \sum_{k=1}^{\infty}h_ke_k\|_{\theta}:=\|(1-\mathcal{A})^{\theta}h\|:=\sqrt{\sum_{k=1}^{\infty}(\lambda_{k}+1)^{2\theta}h_{k}^{2}} ,
\end{equation*}
where $:=$ will be used hereafter to denote definitions. It is straightforward to infer that $\mathcal{H}^{0}=\mathcal{H}$, $\mathcal{H}^{1}=D((1-\mathcal{A}))$ and $\mathcal{H}^{-\theta}$ is the dual space of $\mathcal{H}^{\theta}$.

The cylindrical $\alpha$-stable L\'evy process $L^{\alpha}(t)$ is defined via
\begin{equation*}
L^{\alpha}(t)=\sum_{k=1}^{\infty}L_{k}^{\alpha}(t)e_{k},~~~~~~t\geq0,
\end{equation*}
where  $\{L_{k}^{\alpha}(t)\}_{k=1}^{\infty}$ are independent one  dimensional $\alpha$-stable L\'evy processes on stochastic base $(\Omega,\mathcal{F},\{\mathcal{F}_t\}_{t\geq0},\mathbb{P})$. 
They are purely jump L\'evy processes and have the same characteristic function by L\'evy-Khinchine formula \cite{BSW}, i.e., $\mathbb{E}[e^{i\xi L_{k}^{\alpha}(t)}]=e^{t\psi(\xi)}, t\geq0, k\in\mathbb{N}^{\ast}$,
where $\psi(\xi)$ is the L\'evy symbol given by
$$
\psi(\xi)=-|\xi|^{\alpha}=\int_{{\mathbb{R}}\setminus{\{0\}}}(e^{i\xi y}-1-i\xi y\mathds{1}_{\{|y|<1\}})\nu_{\alpha}(dy).
$$
Here $\nu_{\alpha}$ is the L\'evy measure satisfying $\int_{{R}\setminus{\{0\}}}1\wedge|y|^{2}\nu_{\alpha}(dy)<\infty$, which is determined by
 $$
 \nu_{\alpha}(dy)=c(1,{\alpha})\frac{1}{|y|^{1+\alpha}}dy,
 $$
 where $ c(1,\alpha)=\frac{\alpha\Gamma(\frac{1+\alpha}{2})}{2^{1-\alpha}\sqrt{\pi}\Gamma{(1-\frac{\alpha}{2})}}$ and $\Gamma$ is the Gamma function. For $t>0$ and Borel set $B\in\mathcal{B}({\mathbb{R}}\setminus{\{0\}})$, define the Poisson random measure of $L_{k}^{\alpha}(t)$ by
 \begin{equation*}
 N_{k}(t,B)=\sum_{0< s\leq t}\mathds{1}_{B}(L_{k}^{\alpha}(s)-L_{k}^{\alpha}(s-))=\#\{s\in(0,t]: L_{k}^{\alpha}(s)-L_{k}^{\alpha}(s-)\in B\},
 \end{equation*}
where $L_{k}^{\alpha}(s-)$ is the left limit of $L_{k}^{\alpha}(s)$. The function $\nu_{\alpha}(B)=\mathbb{E}(N(1,B))$ of the L\'evy measure is to describe the expected number of jumps in a certain size at a time interval $(0,1]$.
Furthermore, define the  compensated Poisson measure of $L_{k}^{\alpha}(t)$ via
\begin{equation*}
\tilde{N}_k(t,B)=N_k(t,B)-t\nu_{\alpha}(B).
\end{equation*}
According to the L\'evy-It\^o decomposition \cite{ST}, $L_k^{\alpha}(t)$ are able to be expressed as
\begin{equation}\label{Ito-Dec}
 L_k^{\alpha}(t)=\int_{|y|<1}y\tilde{N}_k(t,dy)+\int_{|y|\geq 1}yN_k(t,dy).
\end{equation}



In order to study system \eqref{GinzburgLandau}, we impose the following assumptions.

\noindent $(\textbf{A1}).$
(Linear operator $\mathcal{A}$ ) Assume that the leading operator $\mathcal{A}$ is a self-adjoint and
non-positive operator on $\mathcal{H}$ with eigenvalues $\{- \lambda_{k}\}_{k=1}^{\infty}$ such that $0 =\lambda_{1}\leq...\leq\lambda_{k}...$, satisfying $\lambda_{k} \to \infty$ for $k\to \infty$. The eigenvectors $\{e_{k}\}_{k=1}^{\infty}$ of $\mathcal{A}$ form a complete orthonormal basis in $\mathcal{H}$ such that $\mathcal{A}e_{k}= - \lambda_{k}e_{k}$.

Denote the kernel space of $\mathcal{A}$ by $\mathcal{N}:=\text{ker}(\mathcal{A})$. According to assumption $(\textbf{A1})$, $\mathcal{N}$ has finite dimension $n$ with basis ${e_1,...,e_n}$, i.e.,
$\mathcal{N}=\text{span}\{e_1,...,e_n\}$,
which means $\lambda_{n}=0<\lambda_{n+1}$.
By $\mathcal{P}_c$ we denote the orthogonal projector from $\mathcal{H}$ onto $\mathcal{N}$
with respect to the inner product $\langle\cdot,\cdot\rangle$ , and by $\mathcal{P}_s:=\mathcal{I}-\mathcal{P}_c$ the orthogonal projector from $\mathcal{H}$ onto the orthogonal complement $\mathcal{S}=\mathcal{N}^{\perp}$,  where $\mathcal{I}$ is the
identity operator on $\mathcal{H}$. For shorthand notation, we use the subscripts $c$ and $s$ for projection onto $\mathcal{N}$ and $\mathcal{S}$, i.e., $\mathcal{A}_c:=\mathcal{P}_c\mathcal{A}$ and $\mathcal{A}_s:=P_s\mathcal{A}$. We define $\mathcal{L}_c$, $\mathcal{L}_s$, $\mathcal{F}_c$ and $\mathcal{F}_s$ in a similar way.

\noindent $(\textbf{A2}).$
(Operator $\mathcal{L}$ ) Let $\mathcal{L}:\mathcal{H}^{\theta}\rightarrow\mathcal{H}^{\theta-\sigma}$ for some
$\theta\in\mathbb{R}$, $\sigma\in[0,1)$ be a linear continuous mapping that commutes with $\mathcal{P}_c$ and $\mathcal{P}_s$.

This assumption is crucial for our approach. If we do not assume that $\mathcal{P}_c$ and $\mathcal{P}_s$ commute with $\mathcal{L}$, then we expect an additional linear coupling of $a$ and $b$ in our formal calculation below, which changes the result completely.

\noindent $(\textbf{A3}).$
(Nonlinearity $\mathcal{F}$). Suppose that $\mathcal{F}$: $(\mathcal{H}^{\theta})^{3}\rightarrow\mathcal{H}^{\theta-\sigma}$,
with $\theta\in\mathbb{R}$, $\sigma\in[0,1)$ from (A2) is a trilinear, symmetric mapping and satisfies the following conditions. For some $C>0$,
\begin{equation}\label{F}
\|\mathcal{F}(u,v,w)\|_{\theta-\sigma}\leq C\|u\|_{\theta}\|v\|_{\theta}\|w\|_{\theta}~~~~\text{for all}~~ u, v, w\in\mathcal{H}^{\theta}.
\end{equation}
Moreover, we have on the space $\mathcal{N}$ the stronger assumptions
\begin{align}\label{fc}
\langle\mathcal{F}_{c}(u),u\rangle&\leq0~~~~\text{for all}~~ u\in\mathcal{N},\\ \label{fu}
\langle\mathcal{F}_{c}(u,u,w),w\rangle&\leq0~~~~\text{for all}~~ u, w\in\mathcal{N},
\end{align}
and for some positive constants $C_{0}$, $C_{1}$ and $C_{2}$,
\begin{equation}\label{fvw}
\langle\mathcal{F}_{c}(u,v,w)-\mathcal{F}_{c}(v),u\rangle\leq-C_{0}\|u\|^{4}+C_{1}\|w\|^{4}+C_{2}\|w\|^{2}\|v\|^{2}~~~~\text{for all}~~ u, v, w\in\mathcal{N}.
\end{equation}
To ease notation, we use $\mathcal{F}(u)=\mathcal{F}(u,u,u)$ for shorthand notation throughout the paper.

Let $L_{HS}(\mathcal{H},\mathcal{H}^{\theta})$ denote the space consisting of all Hilbert-Schmidt
operators from $\mathcal{H}$ to $\mathcal{H}^{\theta}$, where
the norm is given by $\|\Psi\|^2_{L_{HS}}=\sum_{k=1}^\infty\|\Psi e_k\|^2_{\theta}$
for any orthonormal basis $\{e_k\}_{k\in\mathbb{N}}$ of $\mathcal{H}$.

\noindent $(\textbf{A4}).$
(Operator $G$)  Assume that
$G:\mathcal{H}^{\theta}\rightarrow L_{HS}(\mathcal{H},\mathcal{H}^{\theta})$ satisfying $G(0)=0$, with $\theta\in\mathbb{R}$ from (A2) and (A3), is Fr\'echet differentiable up to order $2$ and fulfills the following conditions.
For one $r>0$, there exists a constant $l_r>0$ such that for all $u, v, w\in \mathcal{H}^{\theta}$ with $\|u\|_{\theta}\leq r$,
\begin{align}\label{g}
\|G(u)\|_{L_{HS}}&\leq l_r\|u\|_{\theta},\\ \label{gv}
\|G'(u)\cdot v\|_{L_{HS}}&\leq l_r\|v\|_{\theta},
\end{align}
and
\begin{equation}\label{gvw}
\|G''(u)\cdot(v,w)\|_{L_{HS}}\leq l_r\|v\|_{\theta}\|w\|_{\theta},
\end{equation}
where the notations $G'(u)$ and $G''(u)$ denote the first
and second Fr\'echet derivatives at point $u$, respectively.

We need to control the convergence of various infinite series, which is possible if the
noise is not too irregular.

\noindent
$(\textbf{A5}).$  Define $\beta_k$ so that
 $\|\mathcal{P}_c[G'(0)v] e_k \| \leq \beta_k\|v\|$ for all $v\in  \mathcal{H}^{\theta}$ with $\|u\|_{\theta}\leq r$
 then we assume that $\sum_{k=1}^\infty \beta_k <\infty$.

This means that $\beta_{k}$ decays sufficiently fast when $k\rightarrow\infty$. The assumption  $\sum_{k=1}^\infty \beta_k <\infty$ is stronger than Hilbert-Schmidt, which would be $\sum_{k=1}^\infty \beta_k^2 <\infty$.



It is well known that $\mathcal{A}$ is the infinitesimal generator of an analytic semigroup $\{e^{t\mathcal{A}}\}_{t\geq0}$ on $\mathcal{H}^{\theta}$ with
\begin{equation*}
e^{t\mathcal{A}}\Big(\sum_{k=1}^{\infty}h_ke_k\Big)=\sum_{k=1}^{\infty}e^{- \lambda_{k}t}h_ke_k,~~~~t\geq0.
\end{equation*}
Then  we have the following useful estimate. It is a classical property for an analytic semigroup and we omit the proof.

\begin{lemma}\label{add}
Under assumption $(\textbf{A1})$, for all $\sigma\leq\theta$, $\rho\in(\lambda_{n},\lambda_{n+1})$, $n\in\mathbb{N}^{\ast}$, there exists a constant $M > 0$, which is independent of $h\in\mathcal{H}$, such that for any $t>0$,
\begin{equation}\label{es}
\| e^{t\mathcal{A}}\mathcal{P}_s h\|_{\theta}\leq Mt^{-(\theta-\sigma)}e^{-\rho t}\| \mathcal{P}_s h\|_{\sigma}.
\end{equation}
\end{lemma}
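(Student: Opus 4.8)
The plan is to reduce the estimate to a scalar spectral calculation using the eigenbasis $\{e_k\}$ furnished by assumption $(\textbf{A1})$. Since $\lambda_n=0<\lambda_{n+1}$, the stable projection acts diagonally as $\mathcal{P}_s h=\sum_{k>n}h_ke_k$, so by the explicit form of the semigroup and the definition of $\|\cdot\|_\theta$ one has
\[
\|e^{t\mathcal{A}}\mathcal{P}_s h\|_\theta^2=\sum_{k=n+1}^\infty(\lambda_k+1)^{2\theta}e^{-2\lambda_k t}h_k^2 .
\]
Setting $\gamma:=\theta-\sigma\ge0$ and factoring each summand as $(\lambda_k+1)^{2\theta}e^{-2\lambda_k t}=(\lambda_k+1)^{2\sigma}\cdot(\lambda_k+1)^{2\gamma}e^{-2(\lambda_k-\rho)t}\cdot e^{-2\rho t}$, the entire statement collapses to the single uniform scalar bound
\[
\sup_{k>n,\;t>0}\;(\lambda_k+1)^{2\gamma}\,t^{2\gamma}\,e^{-2(\lambda_k-\rho)t}=:M^2<\infty ,
\]
after which pulling $M^2 t^{-2\gamma}e^{-2\rho t}$ out of the sum and recognizing $\sum_{k>n}(\lambda_k+1)^{2\sigma}h_k^2=\|\mathcal{P}_s h\|_\sigma^2$ yields the claim upon taking square roots.

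The only substantive point is establishing this supremum, and here I would exploit the spectral gap $\rho<\lambda_{n+1}\le\lambda_k$ for every $k>n$. First, $\lambda_k\ge\lambda_{n+1}>0$ gives $\lambda_k+1\le(1+\lambda_{n+1}^{-1})\lambda_k$, so the polynomial factor is controlled by $\lambda_k$ up to the constant $C'=1+\lambda_{n+1}^{-1}$. Second, and crucially, the gap yields $\lambda_k-\rho\ge c\,\lambda_k$ with $c:=(\lambda_{n+1}-\rho)/\lambda_{n+1}\in(0,1)$, whence $e^{-2(\lambda_k-\rho)t}\le e^{-2c\lambda_k t}$; this is precisely where the hypothesis $\rho\in(\lambda_n,\lambda_{n+1})$ enters, allowing one to peel off a factor $e^{-\rho t}$ while retaining genuine exponential smoothing. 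Substituting $x:=2c\lambda_k t$ then reduces everything to the elementary maximization $\sup_{x\ge0}x^{2\gamma}e^{-x}=(2\gamma/e)^{2\gamma}$, producing the explicit value $M^2=(C')^{2\gamma}(2c)^{-2\gamma}(2\gamma/e)^{2\gamma}$, manifestly independent of $h$, $t$, and $k$.

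I do not expect a genuine obstacle, since this is the standard parabolic smoothing estimate; the only care needed is the degenerate case $\gamma=0$ (where $M=1$ and the plain monotonicity $e^{-\lambda_k t}\le e^{-\rho t}$ already suffices) and the bookkeeping verifying that $M$ truly depends only on $\gamma$, $\rho$, and $\lambda_{n+1}$, not on the mode index. The self-adjointness and diagonalization in $(\textbf{A1})$ are what legitimize the reduction to a single scalar supremum, and without the gap $\lambda_{n+1}>\rho$ the exponent $\lambda_k-\rho$ would fail to be a fixed positive multiple of $\lambda_k$, so one could not simultaneously extract the decay $e^{-\rho t}$ and keep enough smoothing to bound the contribution of arbitrarily high modes $k$.
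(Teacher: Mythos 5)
Your proof is correct, and in fact the paper never supplies one: Lemma \ref{add} is stated with the remark that it is ``a classical property for an analytic semigroup'' and the proof is omitted. So your argument fills that gap rather than paralleling an existing one. The textbook route behind the paper's remark would invoke the theory of sectorial operators and fractional powers, namely the smoothing estimate $\|(-\mathcal{A}_s)^{\gamma}e^{t\mathcal{A}_s}\|\leq C t^{-\gamma}e^{-\rho t}$ on the stable subspace, obtained via contour integrals in the functional calculus; your proof instead exploits the self-adjointness and discrete spectrum granted by $(\textbf{A1})$ to diagonalize everything, reducing the claim to the one-variable maximization $\sup_{x\geq0}x^{2\gamma}e^{-x}=(2\gamma/e)^{2\gamma}$. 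This buys a fully self-contained argument with explicit constants, $M^2=(1+\lambda_{n+1}^{-1})^{2\gamma}(2c)^{-2\gamma}(2\gamma/e)^{2\gamma}$, and it isolates exactly where the hypothesis $\rho<\lambda_{n+1}$ is used: the spectral gap gives $\lambda_k-\rho\geq c\lambda_k$ with $c=(\lambda_{n+1}-\rho)/\lambda_{n+1}$ uniformly over $k>n$, which is what lets you peel off $e^{-\rho t}$ while retaining enough exponential decay to control the polynomial factor $(\lambda_k+1)^{2\gamma}$ for arbitrarily high modes. Your two auxiliary inequalities ($\lambda_k+1\leq(1+\lambda_{n+1}^{-1})\lambda_k$ and the gap bound, both relying on $\lambda_k\geq\lambda_{n+1}>0$ for $k>n$ and on $\rho\in(0,\lambda_{n+1})$) are verified correctly, and the degenerate case $\theta=\sigma$ is handled properly; the only cost of your approach is that it is tied to the self-adjoint, diagonalizable setting, whereas the analytic-semigroup argument would survive for non-self-adjoint sectorial generators.
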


To give a meaning to the solution of system \eqref{GinzburgLandau}, we use the definition of local mild solution as in \cite{PZ92}.

\begin{definition}\label{mild}
(Local mild solution).
An $\mathcal{H}^{\theta}$-valued stochastic process $\{u(t)\}_{t\in[0,T]}$, is called a local
mild solution of equation \eqref{GinzburgLandau} if for some stopping time $\tau_{ex}>0$ we have on a set of probability $1$
that $u\in\mathcal{D}([0,\tau_{ex}),\mathcal{H}^{\theta})$ (i.e., a process with c\'adl\'ag paths) and
\begin{equation*}
u(t)=e^{t\mathcal{A}}u(0)+\int_{0}^{t}e^{(t-s)\mathcal{A}}[\varepsilon^{2}\mathcal{L}u(s)+\mathcal{F}(u(s))]ds+\varepsilon^{\frac{2}{\alpha}}\int_{0}^{t}e^{(t-s)\mathcal{A}}G(u(s))dL^{\alpha}(s)
\end{equation*}
for all $t\in(0,\tau_{ex})$.

Moreover, $\tau_{ex}$ is maximal, which means that $\mathbb{P}$-almost surely $\tau_{ex} =\infty$ or $\lim_{t\nearrow\tau_{ex}} \|u(t)\|_\theta=\infty.$
\end{definition}
\begin{remark}
The proof of the existence and uniqueness of a local mild solution
should be fairly standard under our assumptions,
using a cut-off of the nonlinearity so that the nonlinearities are globally Lipschitz together with a fixed-point argument.
Although this is not present in the literature,
we will not go into details in this paper.
For simplicity, ew always assume that we have a mild solution
in the sense of Definition \eqref{mild}.
\end{remark}

Let
\begin{equation}\label{put}
\Psi(t)=\Psi_{0}\mathds{1}_{\{0\}}(t)+\sum_{k=1}^{N-1}\Psi_{k}\mathds{1}_{(t_{k},t_{k+1}]}(t)
\end{equation}
be a simple stochastic process, where $0=t_{1}<...<t_{N}=T$, and $\Psi_{k}$ are $\mathcal{F}_{t_{k}}$-measurable $L_{HS}(\mathcal{H},\mathcal{H}^{\theta})$-valued random variables. We assume that $\Psi$ is predictable in the sense that
for $k\in\{0,1,...,N-1\}$ and $h\in\mathcal{H}^{\theta}$, $\Psi_{k}h$ are $\mathcal{F}_{t_{k}}$-measurable $\mathcal{H}^{\theta}$-valued random variables. Write the stochastic integral with respect to the $\alpha$-stable cylindrical L\'evy process $L^{\alpha}$:
\begin{equation*}
\int_{0}^{t}\Psi(s)dL^{\alpha}(s)=\sum_{k=1}^{N-1}\Psi_{k}(L^{\alpha}(t_{k+1}\wedge t)-L^{\alpha}(t_{k}\wedge t)),
\end{equation*}
which is not continuous \cite{KR}.

To extend the definition of the stochastic integral to more general processes, it is convenient to regard integrands as random variables defined on the product space $[0,T]\times\Omega$, equipped with the product $\sigma$-algebra $\mathcal{B}([0,T])\times\mathcal{F}$. The product measure of the Lebesgue measure $dt$ on $[0,T]$ and the probability measure $\mathbb{P}$ is represented by $\mathbb{P}_{T}:=dt\times\mathbb{P}$. Let $\Lambda(L_{HS}(\mathcal{H},\mathcal{H}^{\theta}))$ denote the space of predictable processes $\Psi:[0,T]\times\Omega\rightarrow L_{HS}(\mathcal{H},\mathcal{H}^{\theta})$ such that
\begin{equation*}
\|\Psi\|_{\Lambda}:=\mathbb{E}\Big(\int_{0}^{T}\|\Psi(s)\|_{L_{HS}}^{\alpha}ds\Big)^{\frac{1}{\alpha}}<\infty,~~~~~~\alpha\in(1,2).
\end{equation*}
That is, $\Lambda(L_{HS}(\mathcal{H},\mathcal{H}^{\theta}))=L^{\alpha}([0,T]\times\Omega,\mathcal{B}([0,T])\times\mathcal{F},\mathbb{P}_{T};L_{HS}(\mathcal{H},\mathcal{H}^{\theta}))$.
We denote with $\Lambda^{s}(L_{HS}(\mathcal{H},\mathcal{H}^{\theta}))$ the space of simple processes of the form \eqref{put}. We follow the approach of \cite[Proposition 4.22(ii)]{PZ92} to verify that the space $\Lambda^{s}(L_{HS}(\mathcal{H},\mathcal{H}^{\theta}))$ is dense in
the space $\Lambda(L_{HS}(\mathcal{H},\mathcal{H}^{\theta}))$ with respect to the norm $\|\cdot\|_{\Lambda}$. Regarding this, we have the following proposition.
\begin{proposition}\label{Prop}
If $\Psi$ is a process belonging to $\Lambda(L_{HS}(\mathcal{H},\mathcal{H}^{\theta}))$, then there exists a sequence $\{\Psi_{n}\}$ of simple processes belonging to $\Lambda^{s}(L_{HS}(\mathcal{H},\mathcal{H}^{\theta}))$ such that $\|\Psi-\Psi_{n}\|_{\Lambda}\to 0$ as $n\to \infty$.
\end{proposition}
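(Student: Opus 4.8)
The plan is to follow the classical three-step density argument for Bochner $L^{\alpha}$-spaces as in \cite{PZ92}, taking care to preserve predictability at every stage. First I would reduce to bounded integrands: given $\Psi\in\Lambda(L_{HS}(\mathcal{H},\mathcal{H}^{\theta}))$, set
\[
\Psi^{(m)}(t,\omega)=\Psi(t,\omega)\mathds{1}_{\{\|\Psi(t,\omega)\|_{L_{HS}}\leq m\}}.
\]
Since $t\mapsto\|\Psi(t)\|_{L_{HS}}$ is a real-valued predictable process, each $\Psi^{(m)}$ is again predictable, and because $\|\Psi\|_{L_{HS}}^{\alpha}\in L^{1}(\mathbb{P}_{T})$, dominated convergence yields $\|\Psi-\Psi^{(m)}\|_{\Lambda}\to0$ as $m\to\infty$. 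Hence it suffices to approximate a process bounded by some constant $m$.

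Second, I would regularize such a bounded predictable process in time so as to make it path-continuous while keeping it adapted. For $h>0$ define the Bochner time-average
\[
\Psi_{h}(t)=\frac{1}{h}\int_{(t-h)\vee0}^{t}\Psi(s)\,ds,
\]
which is well defined in the separable Hilbert space $L_{HS}(\mathcal{H},\mathcal{H}^{\theta})$ because $\Psi$ is bounded and measurable. Since this integral uses only values up to time $t$, the process $\Psi_{h}$ is adapted, and being continuous in $t$ it is predictable; moreover $\|\Psi_{h}(t)\|_{L_{HS}}\leq m$. By the Lebesgue differentiation theorem, $\Psi_{h}(t,\omega)\to\Psi(t,\omega)$ in $L_{HS}$ for $\mathbb{P}_{T}$-almost every $(t,\omega)$, so dominated convergence with the constant dominating function $(2m)^{\alpha}$ gives $\|\Psi_{h}-\Psi\|_{\Lambda}\to0$ as $h\to0$.

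Third, I would discretize a continuous adapted bounded process $\Phi$, in the role of $\Psi_{h}$, in time. Using the uniform partition $t_{k}=kT/N$, set
\[
\Phi_{N}(t)=\Phi(0)\mathds{1}_{\{0\}}(t)+\sum_{k=1}^{N-1}\Phi(t_{k})\mathds{1}_{(t_{k},t_{k+1}]}(t),
\]
which is a simple process of the form \eqref{put} with $\mathcal{F}_{t_{k}}$-measurable coefficients, hence an element of $\Lambda^{s}(L_{HS}(\mathcal{H},\mathcal{H}^{\theta}))$. By path-continuity, $\Phi_{N}(t,\omega)\to\Phi(t,\omega)$ in $L_{HS}$ for every $(t,\omega)$, and the uniform bound $m$ once more permits dominated convergence, so $\|\Phi_{N}-\Phi\|_{\Lambda}\to0$ as $N\to\infty$. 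Finally, a diagonal argument over the three parameters $m$, $h$, $N$ produces a single sequence $\{\Psi_{n}\}\subset\Lambda^{s}(L_{HS}(\mathcal{H},\mathcal{H}^{\theta}))$ with $\|\Psi-\Psi_{n}\|_{\Lambda}\to0$.

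The only genuinely delicate point, and the one I would treat most carefully, is the preservation of predictability through the regularization step: the time-average must integrate over the past interval $[(t-h)\vee0,\,t]$ so that $\Psi_{h}(t)$ stays $\mathcal{F}_{t}$-measurable, and one must verify that continuity of the paths indeed upgrades adaptedness to predictability. Everything else is a routine application of dominated convergence in the Bochner space $L^{\alpha}(\mathbb{P}_{T};L_{HS}(\mathcal{H},\mathcal{H}^{\theta}))$. I note that the restriction $\alpha\in(1,2)$ plays no essential role beyond ensuring that $\|\cdot\|_{\Lambda}$ is well defined, since all approximants are uniformly bounded and the convergence is driven by pointwise convergence together with a finite dominating function.
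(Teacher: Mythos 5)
Your proof is correct, but it follows a genuinely different route from the paper. The paper adapts the argument of \cite[Proposition 4.22(ii)]{PZ92}: it first approximates $\Psi$ pointwise by predictable processes taking only finitely many values, and then reduces the problem to a purely set-theoretic one, namely approximating an arbitrary set in the product $\sigma$-algebra, in $\mathbb{P}_T$-measure, by finite disjoint unions of predictable rectangles $(s,t]\times F$ with $F\in\mathcal{F}_s$; this last step is handled by a $\pi$-system/Dynkin-class argument. You instead run the classical construction familiar from the It\^o integral: truncate to get a bounded predictable integrand, mollify in time by averaging over the past interval $[(t-h)\vee 0,t]$ (which preserves adaptedness and yields continuous, hence predictable, paths), then sample at the left endpoints of a partition and conclude by dominated convergence and a diagonal argument. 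Your approach is more constructive --- it produces explicit approximants and avoids the monotone class theorem --- at the price of invoking the vector-valued Lebesgue differentiation theorem and the fact that an adapted process with left-continuous paths is predictable; the paper's set-level argument needs no path regularization and makes transparent that the whole content of the statement is the approximation of predictable sets by predictable rectangles. Two cosmetic points in your write-up: with $t_k=kT/N$ your discretization leaves $(0,T/N]$ uncovered (implicitly assigning the value $0$ there), which is harmless but does not match the convention $t_1=0$ of \eqref{put}; and the norm $\|\cdot\|_{\Lambda}$ as literally written in the paper is $\mathbb{E}\big[\big(\int_0^T\|\cdot\|^{\alpha}_{L_{HS}}\,ds\big)^{1/\alpha}\big]$ rather than the Bochner $L^{\alpha}(\mathbb{P}_T)$ norm the paper subsequently identifies it with --- your dominated-convergence arguments deliver convergence in either, so nothing breaks.
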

\begin{proof}
Since the space $L_{HS}(\mathcal{H},\mathcal{H}^{\theta})$ is densely embedded into $\Lambda(L_{HS}(\mathcal{H},\mathcal{H}^{\theta}))$, there exists a sequence $\{\Psi_{n}\}$ of $L_{HS}(\mathcal{H},\mathcal{H}^{\theta})$-valued predictable simple processes
\begin{equation*}
\Psi_{n}(t)=\Psi_{0,n}\mathds{1}_{\{0\}}(t)+\sum_{k=1}^{N-1}\Psi_{k,n}\mathds{1}_{(t_{k},t_{k+1}]}(t)
\end{equation*}
 on $[0,T]$ taking on only
a finite numbers of values such that
\begin{equation*}
\|\Psi(t,\omega)-\Psi_{n}(t,\omega)\|_{L_{HS}}\downarrow0
\end{equation*}
for all $(t,\omega)\in[0,T]\times\Omega$. Consequently $\|\Psi-\Psi_{n}\|_{\Lambda}\downarrow0$. Therefore it is sufficient to prove that for arbitrary $A\in\mathcal{B}([0,T])\times\mathcal{F}$ and arbitrary $\varepsilon>0$ there exists a finite sum $B$ of disjoint sets of the form
\begin{equation}\label{form}
(s,t]\times F,~~0\leq s<t\leq T,~~F\in\mathcal{F}_{s}~~~~\text{and}~~~~\{0\}\times F,~~F\in\mathcal{F}_{0},
\end{equation}
such that
\begin{equation*}
\mathbb{P}_{T}\{(A\setminus B)\cup(B\setminus A)\}<\varepsilon.
\end{equation*}
To show this let us denote by $\mathcal{K}$ the family of all finite sums for sets of the form \eqref{form}.
Because $\varnothing\in\mathcal{K}$ and if $B_{1}, B_{2}\in\mathcal{K}$ then $B_{1}\cap B_{2}\in\mathcal{K}$, $\mathcal{K}$ is a
$\pi$-system. Let $\mathcal{G}$ be the family of all $A\in\mathcal{B}([0,T])\times\mathcal{F}$ which can be approximated in the above sense by elements from $\mathcal{K}$. One can check that $\mathcal{K}\subset\mathcal{G}$ and if $A\in\mathcal{G}$ then the complement $A^{c}$ satisfies $A^{c}\in\mathcal{G}$, and that if $A_{k}\in\mathcal{G}$ for all $k\in\mathbb{N}$ and $A_{n}\cap A_{m}=\varnothing$ for $n\neq m$, then $\cup_{k=1}^{\infty}A_{k}\in\mathcal{G}$. Hence $\sigma(\mathcal{K})=\mathcal{B}([0,T])\times\mathcal{F}=\mathcal{G}$ are required.
\end{proof}


\begin{remark}
 Using Proposition \ref{Prop}, we are able to extend the definition of stochastic integral $\int_{0}^{t}\Psi(s)dL^{\alpha}(s)$ to all $\Lambda(L_{HS}(\mathcal{H},\mathcal{H}^{\theta}))$-predictable processes $\Psi$.
\end{remark}

A moment inequality was proven for $\alpha$-stable L\'evy process in the case of real-valued integrand and
vector-valued integrator from \cite[Theorem 4.2]{RW}. When $p<\alpha$,
\begin{equation}\label{moment}
\mathbb{E}\Big(\sup_{0\leq t\leq T}\|\int_{0}^{t}F(s)dL^{\alpha}(s)\|\Big)^{p}\leq C\mathbb{E}\Big(\int_{0}^{T}|F(s)|^{\alpha}ds\Big)^{\frac{p}{\alpha}},
\end{equation}
where real process $F\in L^{q}([0,T]), 0<q<2$.
Similarly, one can obtain the following
moment inequality. See \cite{Rnew} and also \cite{R} for the $L^2$-theory.
\begin{equation*}
\mathbb{E}\Big(\sup_{0\leq t\leq T}\|\int_{0}^{t}\Psi(s)dL^{\alpha}(s)\|_{\theta}\Big)^{p}\leq C\mathbb{E}\Big(\int_{0}^{T}\|\Psi(s)\|_{L_{HS}}^{\alpha}ds\Big)^{\frac{p}{\alpha}}.
\end{equation*}
\begin{remark}
It should be highlighted here that we can take $\Psi(s)=G(u(s))$ to introduce one of our main tools
\begin{equation}\label{Lmoment}
\mathbb{E}\Big(\sup_{0\leq t\leq T}\|\int_{0}^{t}G(u(s))dL^{\alpha}(s)\|_{\theta}\Big)^{p}\leq C\mathbb{E}\Big(\int_{0}^{T}\|G(u(s))\|_{L_{HS}}^{\alpha}ds\Big)^{\frac{p}{\alpha}},
\end{equation}
which is a crucial moment inequality in investigating system \eqref{GinzburgLandau}. Note that it does not apply to the stochastic convolution in the Definition \eqref{mild},
as the integrand there depends on time.
\end{remark}

\section{Framework and main result}

Focusing on investigating  the local mild solution $u$  such that it is small
of order $\mathcal{O}(\varepsilon)$, we introduce the slow time scaling $T =\varepsilon^{2}t$. Let us split it into
\begin{equation*}
u(t)=\varepsilon a(\varepsilon^{2}t)+\varepsilon b(\varepsilon^{2}t),
\end{equation*}
with $a\in\mathcal{N}$ and $b\in\mathcal{S}$. By projecting and rescaling to the slow time scale, we obtain
\begin{equation}\label{a}
da(T)=[\mathcal{L}_{c}a(T)+\mathcal{F}_{c}(a(T)+b(T))]dT+\frac{1}{\varepsilon}G_{c}(\varepsilon a(T)+\varepsilon b(T))d\tilde{L}^{\alpha}(T)
\end{equation}
and
\begin{equation}\label{b}
db(T)=[\frac{1}{\varepsilon^{2}}\mathcal{A}_{s}b(T)+\mathcal{L}_{s}b(T)+\mathcal{F}_{s}(a(T)+b(T))]dT+\frac{1}{\varepsilon}G_{s}(\varepsilon a(T)+\varepsilon b(T))d\tilde{L}^{\alpha}(T),
\end{equation}
where $\tilde{L}^{\alpha}(T):=\varepsilon^{\frac{2}{\alpha}}L^{\alpha}(\varepsilon^{-2}T)$ is a rescaled version of the $\alpha$-stable L\'evy process. It is based on the fact that $\alpha$-stable L\'evy process $L^{\alpha}(t)$ is self-similar with Hurst index $1/\alpha$, i.e.,
\[
L^{\alpha}(ct)\stackrel{d}{=}  c^{1/\alpha} L^{\alpha}(t),~~~~c>0,
\]
where $``\stackrel{d}{=}"$ denotes equivalence (coincidence) in distribution.

Using the mild formulation, we rewrite the equations \eqref{a} and \eqref{b} into the integral form:
\begin{equation}\label{ia}
a(T)=a(0)+\int_{0}^{T}\mathcal{L}_{c}a(\tau)d\tau+\int_{0}^{T}\mathcal{F}_{c}(a(\tau)+b(\tau))d\tau+\frac{1}{\varepsilon}\int_{0}^{T}G_{c}(\varepsilon a(\tau)+\varepsilon b(\tau))d\tilde{L}^{\alpha}(\tau)
\end{equation}
and
\begin{align}\label{ib}
b(T)&=e^{\varepsilon^{-2}T\mathcal{A}_{s}}b(0)+\int_{0}^{T}e^{\varepsilon^{-2}(T-\tau)\mathcal{A}_{s}}\mathcal{L}_{s}b(\tau)d\tau+\int_{0}^{T}e^{\varepsilon^{-2}(T-\tau)\mathcal{A}_{s}}\mathcal{F}_{s}(a(\tau)+b(\tau))d\tau\\ \nonumber
    &~~~~+\frac{1}{\varepsilon}\int_{0}^{T}e^{\varepsilon^{-2}(T-\tau)\mathcal{A}_{s}}G_{s}(\varepsilon a(\tau)+\varepsilon b(\tau))d\tilde{L}^{\alpha}(\tau).
\end{align}
Denote the corresponding four terms arising in the right-hand side of system \eqref{ib} by $Q(T)$, $I(T)$, $J(T)$ and $K(T)$, respectively. That is
\begin{equation}\label{QIJK}
b(T)=Q(T)+I(T)+J(T)+K(T).
\end{equation}

We shall see later that $b$ is bounded and it's integral is small as long as $a$ is of order one (see Remark \ref{Q}, Lemma \ref{IJ}-\ref{K} and Lemma \ref{B} for the precise statements). Only $Q(T)$ with the initial condition and the term $K(T)$ are not $\mathcal{O}(\varepsilon)$, but $Q(T)$ is only of order one for very small times and the integral of $K(T)$ is $\mathcal{O}(\varepsilon)$. Thus by neglecting all $b$-dependent terms in \eqref{a} or \eqref{ia} and
expanding the $G_{c}$ term we obtain the amplitude equation
\begin{equation}\label{v}
d\varphi(T)=\mathcal{L}_{c}\varphi(T)dT+\mathcal{F}_{c}(\varphi(T))dT+[G'_{c}(0)\cdot \varphi(T)]d\tilde{L}^{\alpha}(T),~~~~\varphi(0)=a(0).
\end{equation}
Integrating \eqref{v} we obtain
\begin{equation}\label{iv}
\varphi(T)=a(0)+\int_{0}^{T}\mathcal{L}_{c}\varphi(\tau)d\tau+\int_{0}^{T}\mathcal{F}_{c}(\varphi(\tau))d\tau+\int_{0}^{T}[G'_{c}(0)\cdot \varphi(\tau)]d\tilde{L}^{\alpha}(\tau).
\end{equation}
Note that the noise in the SDE is still infinite dimensional,
but $G'_{c}(0)\cdot \varphi = \mathcal{P}_c[G'(0)\cdot \varphi]$ is a Hilbert-Schmidt operator that maps $\tilde{L}^{\alpha}(\tau)$ into the finite dimensional space $\mathcal{N}$.

Define the overall error between $u$ and $\varepsilon\varphi$ by $\mathcal{R}(\varepsilon^{2}t):=u(t)-\varepsilon \varphi(\varepsilon^{2}t)$ or
\begin{align}\nonumber
\mathcal{R}(T)&:=u(\varepsilon^{-2}T)-\varepsilon \varphi(T) \\ \nonumber
              &:=\varepsilon[a(T)-\varphi(T)+b(T)]\\ \label{huaR}
              &:=\varepsilon[a(T)-\varphi(T)+Q(T)+I(T)+J(T)+K(T)].
\end{align}

With our main assumptions we have the following main result on the approximation by
amplitude equation, which is proved later at the end of this section .

\begin{theorem}\label{MR}
Assume that $(\textbf{A1})$-$(\textbf{A5})$ hold. Let $u$ be the  mild solution of \eqref{GinzburgLandau}
with initial condition
\begin{equation*}
u(0)=\varepsilon a(0)+\varepsilon b(0),
\end{equation*}
where $a(0)\in\mathcal{N}$ and $b(0)\in\mathcal{S}$. The solution $\varphi$ of the amplitude equation \eqref{v} satisfies the initial condition $\varphi(0)=a(0)$. Then for any $p\in(0,\alpha)$, $T_{0}>0$ and all small $\kappa\in(0,\frac{2}{19})$,
provided
$\| u(0)\|_{\theta}\leq\varepsilon^{1-\kappa/2}$
we
obtain for the error  $\mathcal{R}$ defined in \eqref{huaR}
 that
\begin{equation*}
\mathbb{P}\Big(\|\mathcal{R}_c\|_{L^\infty([0,T_0];\mathcal{H}^{\theta})}
\geq\varepsilon^{2-19\kappa}\Big)\underset{\varepsilon\rightarrow0}{\longrightarrow}0\quad\text{and}\quad\mathbb{P}\Big(\|\mathcal{R}_s\|_{L^p([0,T_0];\mathcal{H}^{\theta})}\geq\varepsilon^{3-7\kappa }\Big)\underset{\varepsilon\rightarrow0}{\longrightarrow}0.
\end{equation*}
\end{theorem}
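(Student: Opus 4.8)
The plan is to work throughout on the slow time scale and use the orthogonal splitting $u(\varepsilon^{-2}T)=\varepsilon a(T)+\varepsilon b(T)$ with $a\in\mathcal{N}$, $b\in\mathcal{S}$, so that $\mathcal{R}_c=\varepsilon(a-\varphi)$ and $\mathcal{R}_s=\varepsilon b$ by orthogonality of $\mathcal{P}_c$ and $\mathcal{P}_s$. The two assertions then reduce to $\|a-\varphi\|_{L^\infty([0,T_0];\mathcal{H}^\theta)}<\varepsilon^{1-19\kappa}$ and $\|b\|_{L^p([0,T_0];\mathcal{H}^\theta)}<\varepsilon^{2-7\kappa}$, each with probability tending to $1$. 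Since the $\alpha$-stable noise possesses only moments of order $p<\alpha$ and $\mathcal{F}$ is cubic, I would first introduce a stopping time $\tau_\varepsilon$ that halts the dynamics once $\|a(T)\|_\theta$ or $\|\varphi(T)\|_\theta$ crosses a barrier of size $\varepsilon^{-\kappa}$, and then prove $\mathbb{P}(\tau_\varepsilon<T_0)\to0$ only at the very end. On $[0,T_0\wedge\tau_\varepsilon]$ every nonlinear term is polynomially bounded, which is exactly what makes the moment inequality \eqref{Lmoment} usable; Markov's inequality will finally convert the resulting $p$-th moment bounds into the stated probabilities.

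First I would control the stable component $b=Q+I+J+K$ from \eqref{QIJK} via Remark \ref{Q} and Lemmas \ref{IJ}--\ref{K}. The semigroup term $Q(T)=e^{\varepsilon^{-2}T\mathcal{A}_s}b(0)$ is of order one only for $T=\mathcal{O}(\varepsilon^{2})$ and decays exponentially through Lemma \ref{add}; the drift convolutions $I$ and $J$ gain a factor $\varepsilon^{2}$ from the smoothing estimate \eqref{es}; and the stochastic convolution $K$ is the sole genuinely dangerous piece, since a single large jump can make $\|K(T)\|_\theta$ large, yet the regularizing semigroup combined with \eqref{Lmoment} still controls its $L^p$-in-time norm. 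Summing these contributions (cf. Lemma \ref{B}) yields $\|b\|_{L^p}<\varepsilon^{2-7\kappa}$ and, equally important, the estimates on $\int_0^T\|b(\tau)\|_\theta\,d\tau$ and on the noise feedback of $b$ that feed into the central equation.

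Next I would treat the central error $d:=a-\varphi$. Subtracting \eqref{iv} from \eqref{ia} and Taylor-expanding $\varepsilon^{-1}G_c(\varepsilon a+\varepsilon b)=G'_c(0)(a+b)+\tfrac{\varepsilon}{2}G''_c(\xi)(a+b)^{2}$ using $G(0)=0$ and $(\textbf{A4})$, the error solves
\[
d(T)=\int_0^T\mathcal{L}_c d\,d\tau+\int_0^T\big[\mathcal{F}_c(a+b)-\mathcal{F}_c(\varphi)\big]\,d\tau+\int_0^T\big[G'_c(0)d\big]\,d\tilde L^\alpha(\tau)+\mathcal{E}(T),
\]
where the remainder $\mathcal{E}$ gathers the $b$-driven noise $\int_0^T[G'_c(0)b]\,d\tilde L^\alpha$ and the quadratic term $\tfrac{\varepsilon}{2}\int_0^T[G''_c(\xi)(a+b)^2]\,d\tilde L^\alpha$. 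By trilinearity and \eqref{F} the drift difference is Lipschitz in $d$ up to an $\mathcal{O}(\|b\|_\theta)$ source on $[0,T_0\wedge\tau_\varepsilon]$, while the stochastic term is linear in $d$; applying \eqref{Lmoment} and a Gronwall inequality for c\`adl\`ag processes then bounds $\sup_{T\le T_0\wedge\tau_\varepsilon}\|d(T)\|_\theta$ by the size of $\mathcal{E}$. Here assumption $(\textbf{A5})$, i.e. $\sum_k\beta_k<\infty$, is essential for estimating the finite-dimensional central projection of both the $b$-noise and the quadratic remainder, and it delivers a bound of order $\varepsilon^{1-19\kappa}$, hence the first claim.

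The hard part will be the interplay between the cut-off stopping time and the large jumps of $L^\alpha$. Unlike the Gaussian setting, $\sup_T\|b\|_\theta$ cannot be controlled, so the feedback of $b$ into the central equation must be measured only in $L^p$-in-time, and the barrier argument has to tolerate the fact that a single jump at $\tau_\varepsilon$ may push $\|a(\tau_\varepsilon)\|_\theta$ far beyond $\varepsilon^{-\kappa}$. Controlling this overshoot, and verifying that the accumulated errors still close the Gronwall estimate strictly below the threshold so that $\mathbb{P}(\tau_\varepsilon<T_0)\to0$, is the delicate technical core. The somewhat awkward exponents $19\kappa$ and $7\kappa$, together with the restriction $\kappa\in(0,\tfrac{2}{19})$, are precisely the bookkeeping cost of balancing these jump-induced losses against the gains of order $\varepsilon$ and the semigroup smoothing.
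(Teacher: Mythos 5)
Your high-level architecture (splitting $u=\varepsilon a+\varepsilon b$, a cut-off stopping time, control of $b=Q+I+J+K$, an error equation for $a-\varphi$, Chebyshev at the end) matches the paper's, but the core analytic step is wrong as proposed. You treat the drift difference $\mathcal{F}_c(a+b)-\mathcal{F}_c(\varphi)$ as ``Lipschitz in $d$ up to an $\mathcal{O}(\|b\|_\theta)$ source'' and then close with Gronwall. On the event where your barrier is active one only has $\|a\|_\theta,\|\varphi\|_\theta\leq\varepsilon^{-\kappa}$, so by trilinearity the local Lipschitz constant of the cubic is of order $\varepsilon^{-2\kappa}$, and Gronwall then produces a factor $e^{C\varepsilon^{-2\kappa}T_0}$, which grows faster than any power of $\varepsilon^{-1}$ and annihilates every source estimate. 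The barrier cannot be lowered to $\mathcal{O}(1)$ either, since the theorem allows $\|a(0)\|_\theta\sim\varepsilon^{-\kappa/2}$ and Chebyshev then gives no vanishing probability of staying below a constant barrier. This is exactly why assumption (A3) contains the one-sided condition \eqref{fvw}: the paper applies It\^o's formula to $f(e)=\big(\|e\|^{2}+\varepsilon^{2}\big)^{p/2}$ with $p<\alpha$ (using the L\'evy--It\^o jump decomposition, Burkholder--Davis--Gundy for the compensated small jumps, and (A5) to sum the series in $k$), and uses \eqref{fvw} so that the dangerous quadratic-in-$\varphi$ terms are absorbed by $-C_{0}\|e\|^{4}$, leaving only sources in the residual $R$. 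Your proposal never invokes \eqref{fc}, \eqref{fu} or \eqref{fvw}, and without them the Gronwall loop does not close. The same obstruction reappears in the uniform moment bound on $\varphi$ (indispensable at the end to show the barrier is not hit): working only with the integral equation and \eqref{Lmoment}, the cubic drift produces a superlinear term that no Gronwall argument controls; its good sign can only be exploited through an It\^o-type chain rule.

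There are two further structural problems. First, your stopping time stops on $\|a\|_\theta$ and $\|\varphi\|_\theta$ but not on $\|b\|_\theta$, yet every estimate you plan to import (Lemma \ref{IJ}, Lemma \ref{K}, the residual bounds) needs the pathwise bound $\|b\|_\theta\leq\varepsilon^{-2\kappa}$ up to the stopping time, because $\mathcal{F}_s(a+b)$, $\mathcal{F}_c(\cdot,\cdot,b)$ and $G(\varepsilon a+\varepsilon b)$ all involve $b$; stopping on $\varphi$ is neither needed (its bound is obtained in moments) nor a substitute. The paper breaks the resulting circularity by including the weaker barrier $\varepsilon^{-2\kappa}$ on $b$ in $\tau^{*}$, and then must prove the uniform-in-time bound on $K$ (Lemma \ref{M}) to show that this barrier is not reached --- note that $\sup_T\|K\|_\theta$ is only $\mathcal{O}(\varepsilon^{-\kappa})$, not small, precisely because of the large jumps. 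Second, you control $K$ by ``the regularizing semigroup combined with \eqref{Lmoment}'', but the paper explicitly remarks that \eqref{Lmoment} does not apply to stochastic convolutions, since the integrand $e^{\varepsilon^{-2}(T-\tau)\mathcal{A}_s}G_s(\cdot)$ depends on the outer time $T$; both Lemma \ref{K} and Lemma \ref{M} first remove the semigroup via the Riesz--Nagy embedding of the contraction semigroup into a unitary group (respectively the maximal inequality of Brze\'zniak--Hausenblas) before any moment inequality can be used. Finally, the overshoot at the stopping time that you flag as the delicate core is actually a non-issue in the paper's scheme: one works with the event inclusion $\{\sup_{[0,\tau^{*}]}\|a\|_\theta<\varepsilon^{-\kappa},\ \sup_{[0,\tau^{*}]}\|b\|_\theta<\varepsilon^{-2\kappa}\}\subseteq\{\tau^{*}=T_{0}\}$, so no control of the jump at $\tau^{*}$ itself is required.
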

Let us first remark that in the previous result for the time $\tau_{ex}$ of existence
of our local mild solution we also have $\tau_{ex}>\varepsilon^{-2}T_{0}$ with high probability.

Now we need to introduce a stopping time in connection with process $(a,b)$. This stopping
time is equivalent to a cut-off in \eqref{GinzburgLandau} at order slightly bigger than $\varepsilon$. Also this stopping time is the reason, why we only need local solutions for the SPDE that might not exist for all times.

\begin{definition}\label{txin}
For the $\mathcal{N}\times\mathcal{S}$-valued stochastic process $(a,b)$ satisfying the integral equations \eqref{ia} and \eqref{ib}
, some time $T_{0}>0$ and small exponent $\kappa\in(0,\frac{2}{19})$, we define the stopping time $\tau^{*}$ as
\begin{equation}\label{tao}
\tau^{*}:=T_{0}\wedge\inf\{T>0:~\|a(T)\|_{\theta}>\varepsilon^{-\kappa}~~~or~~~\|b(T)\|_{\theta}>\varepsilon^{-2\kappa}\}.
\end{equation}
\end{definition}

\begin{remark}\label{Q}
In the decomposition of $b(T)$, for $Q(T)=e^{\varepsilon^{-2}T\mathcal{A}_{s}}b(0)$ the following estimates hold:
\begin{gather*}
\mathbb{E}\sup_{0\leq T\leq\tau^{*}}\|Q(T)\|_{\theta}^{p}\leq\mathbb{E}\sup_{0\leq T\leq\tau^{*}} Ce^{-\varepsilon^{-2}\rho pT}\|b(0)\|_{\theta}^{p}\leq C\|b(0)\|_{\theta}^{p},\\
\mathbb{E}\int_{0}^{\tau^{*}}\|Q(\tau)\|_{\theta}^{p}d\tau\leq\mathbb{E}C\int_{0}^{\tau^{*}}e^{-\varepsilon^{-2}\rho p\tau}\|b(0)\|_{\theta}^{p}d\tau\leq C\varepsilon^{2}\|b(0)\|_{\theta}^{p}.
\end{gather*}
\end{remark}

Before proving the main result Theorem \ref{MR}, we need to state some technical lemmas used later in the proof.
\begin{lemma}\label{IJ}
Assume that the assumptions $(\textbf{A1})$-$(\textbf{A5})$ hold. For $p\in(0,\alpha)$ and $\tau^{*}$ from Definition $\rm\ref{txin}$,
there exists a constant $C>0$ such that
\begin{equation}\label{I}
\mathbb{E}\sup_{0\leq T\leq\tau^{*}}\|I(T)\|_{\theta}^{p}\leq C\varepsilon^{2p-2\kappa p}
\end{equation}
and
\begin{equation}\label{J}
\mathbb{E}\sup_{0\leq T\leq\tau^{*}}\|J(T)\|_{\theta}^{p}\leq C\varepsilon^{2p-6\kappa p}.
\end{equation}
\end{lemma}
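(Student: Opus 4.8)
The plan is to treat both estimates as \emph{pathwise} bounds valid on the stochastic interval $[0,\tau^*]$, since $I(T)$ and $J(T)$ are ordinary Bochner integrals (the only stochastic term in the decomposition \eqref{QIJK} is $K$). The randomness enters solely through the a priori control supplied by the stopping time $\tau^*$ from Definition \ref{txin}, namely $\|a(\tau)\|_\theta\leq\varepsilon^{-\kappa}$ and $\|b(\tau)\|_\theta\leq\varepsilon^{-2\kappa}$ for all $\tau\leq\tau^*$. The two workhorses are the analytic-semigroup estimate \eqref{es} of Lemma \ref{add} and the continuity/trilinearity bounds from $(\textbf{A2})$--$(\textbf{A3})$. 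Once a deterministic bound $\|I(T)\|_\theta\leq C\varepsilon^{2-2\kappa}$ (resp.\ $\|J(T)\|_\theta\leq C\varepsilon^{2-6\kappa}$) is obtained uniformly in $T\leq\tau^*$, raising to the $p$-th power and taking the supremum and expectation is immediate.

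For $I(T)=\int_0^T e^{\varepsilon^{-2}(T-\tau)\mathcal{A}_s}\mathcal{L}_s b(\tau)\,d\tau$, I would apply \eqref{es} with source regularity $\theta-\sigma$ and target regularity $\theta$ (permissible since $\sigma\geq0$), so that the singular exponent is exactly $-\sigma$:
\begin{equation*}
\|e^{\varepsilon^{-2}(T-\tau)\mathcal{A}_s}\mathcal{L}_s b(\tau)\|_\theta
\leq M\big(\varepsilon^{-2}(T-\tau)\big)^{-\sigma}e^{-\rho\varepsilon^{-2}(T-\tau)}\|\mathcal{L}_s b(\tau)\|_{\theta-\sigma}.
\end{equation*}
Using the continuity of $\mathcal{L}$ from $(\textbf{A2})$, $\|\mathcal{L}_s b(\tau)\|_{\theta-\sigma}\leq C\|b(\tau)\|_\theta\leq C\varepsilon^{-2\kappa}$ on $[0,\tau^*]$, and collecting the power $\varepsilon^{2\sigma}$ produced by the rescaled time, one is left to control $\int_0^T s^{-\sigma}e^{-\rho\varepsilon^{-2}s}\,ds$. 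The substitution $r=\varepsilon^{-2}s$ extracts a factor $\varepsilon^{2-2\sigma}$ and reduces the remaining integral to $\int_0^{\varepsilon^{-2}T} r^{-\sigma}e^{-\rho r}\,dr$, which is bounded uniformly in $\varepsilon$ by the convergent Gamma integral $\rho^{\sigma-1}\Gamma(1-\sigma)$ precisely because $\sigma\in[0,1)$. Multiplying the three powers $\varepsilon^{2\sigma}\cdot\varepsilon^{-2\kappa}\cdot\varepsilon^{2-2\sigma}=\varepsilon^{2-2\kappa}$ gives $\|I(T)\|_\theta\leq C\varepsilon^{2-2\kappa}$, hence \eqref{I}.

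The term $J(T)=\int_0^T e^{\varepsilon^{-2}(T-\tau)\mathcal{A}_s}\mathcal{F}_s(a(\tau)+b(\tau))\,d\tau$ is handled identically, the only change being the source estimate: by the trilinear bound \eqref{F} of $(\textbf{A3})$ together with $\|a(\tau)+b(\tau)\|_\theta\leq\varepsilon^{-\kappa}+\varepsilon^{-2\kappa}\leq C\varepsilon^{-2\kappa}$ on $[0,\tau^*]$, one gets $\|\mathcal{F}_s(a(\tau)+b(\tau))\|_{\theta-\sigma}\leq C\|a(\tau)+b(\tau)\|_\theta^3\leq C\varepsilon^{-6\kappa}$. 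Replacing $\varepsilon^{-2\kappa}$ by $\varepsilon^{-6\kappa}$ in the computation above yields $\|J(T)\|_\theta\leq C\varepsilon^{2-6\kappa}$, and therefore \eqref{J}. The only genuine subtlety—and the step I would check most carefully—is that the time-rescaled singular integral $\int_0^{\varepsilon^{-2}T} r^{-\sigma}e^{-\rho r}\,dr$ remains bounded as $\varepsilon\to0$; this is where the structural hypothesis $\sigma<1$ is essential, since for $\sigma\geq1$ the singularity at $r=0$ would no longer be integrable and the gain of $\varepsilon^2$ from the slow-time parabolic smoothing would be lost.
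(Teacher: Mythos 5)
Your proposal is correct and follows essentially the same route as the paper: both arguments are pathwise on $[0,\tau^*]$, combining the stopping-time bounds $\|a\|_\theta\leq\varepsilon^{-\kappa}$, $\|b\|_\theta\leq\varepsilon^{-2\kappa}$ with the semigroup estimate \eqref{es} (singular exponent $-\sigma$), the mapping bounds from $(\textbf{A2})$--$(\textbf{A3})$, and the substitution $r=\varepsilon^{-2}\rho(T-\tau)$ producing the factor $\varepsilon^{2-2\sigma}$ and the convergent integral $\int_0^\infty r^{-\sigma}e^{-r}\,dr$ for $\sigma\in[0,1)$. The bookkeeping $\varepsilon^{2\sigma}\cdot\varepsilon^{-2\kappa}\cdot\varepsilon^{2-2\sigma}=\varepsilon^{2-2\kappa}$ (resp.\ $\varepsilon^{-6\kappa}$ for $J$) matches the paper exactly, and taking the supremum and expectation of the resulting deterministic bound is, as you say, immediate.
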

\begin{proof}
By virtue of the estimate \eqref{es},
\begin{align*}
\mathbb{E}\sup_{0\leq T\leq\tau^{*}}\|I(T)\|_{\theta}^{p}&=\mathbb{E}\sup_{0\leq T\leq\tau^{*}}\|\int_{0}^{T}e^{\varepsilon^{-2}(T-\tau)\mathcal{A}_{s}}\mathcal{L}_{s}b(\tau)d\tau\|_{\theta}^{p}\\
&\leq\mathbb{E}\sup_{0\leq T\leq\tau^{*}}[\int_{0}^{T}\|e^{\varepsilon^{-2}(T-\tau)\mathcal{A}_{s}}\mathcal{L}_{s}b(\tau)\|_{\theta}d\tau]^{p}\\
 &\leq C\varepsilon^{2\sigma p}\mathbb{E}\sup_{0\leq T\leq\tau^{*}}[\int_{0}^{T}e^{-\varepsilon^{-2}\rho(T-\tau)}(T-\tau)^{-\sigma}\|\mathcal{L}_{s}b(\tau)\|_{\theta-\sigma}d\tau]^{p}\\
&\leq C\varepsilon^{2\sigma p}\mathbb{E}\sup_{0\leq T\leq\tau^{*}}[\int_{0}^{T}e^{-\varepsilon^{-2}\rho(T-\tau)}(T-\tau)^{-\sigma}\|b(\tau)\|_{\theta}d\tau]^{p}\\
 &\leq C\varepsilon^{2\sigma p}\sup_{0\leq T\leq\tau^{*}}[\int_{0}^{T}e^{-\varepsilon^{-2}\rho(T-\tau)}(T-\tau)^{-\sigma}\varepsilon^{-2\kappa}d\tau]^{p}\\
&\leq C\varepsilon^{2p-2\kappa p}\sup_{0\leq T\leq\tau^{*}}[\int_{0}^{\varepsilon^{-2}\rho T}e^{-r}r^{-\sigma}dr]^{p}\\
&\leq C\varepsilon^{2p-2\kappa p}.
\end{align*}
Now, let's prove inequality \eqref{J},
\allowdisplaybreaks
\begin{eqnarray*}
\mathbb{E}\sup_{0\leq T\leq\tau^{*}}\|J(T)\|_{\theta}^{p}
&=&\mathbb{E}\sup_{0\leq T\leq\tau^{*}}\|\int_{0}^{T}e^{\varepsilon^{-2}(T-\tau)\mathcal{A}_{s}}\mathcal{F}_{s}(a(\tau)+b(\tau))d\tau\|_{\theta}^{p}\\
&\leq&\mathbb{E}\sup_{0\leq T\leq\tau^{*}}[\int_{0}^{T}\|e^{\varepsilon^{-2}(T-\tau)\mathcal{A}_{s}}\mathcal{F}_{s}(a(\tau)+b(\tau))\|_{\theta}d\tau]^{p}\\
&\leq& C\varepsilon^{2\sigma p}\mathbb{E}\sup_{0\leq T\leq\tau^{*}}[\int_{0}^{T}e^{-\varepsilon^{-2}\rho(T-\tau)}(T-\tau)^{-\sigma}\|\mathcal{F}_{s}(a(\tau)+b(\tau))\|_{\theta-\sigma}d\tau]^{p}\\
 &\leq& C\varepsilon^{2\sigma p}\mathbb{E}\sup_{0\leq T\leq\tau^{*}}[\int_{0}^{T}e^{-\varepsilon^{-2}\rho(T-\tau)}(T-\tau)^{-\sigma}\|a(\tau)+b(\tau)\|_{\theta}^{3}d\tau]^{p}\\
 &\leq& C\varepsilon^{2\sigma p}\mathbb{E}\sup_{0\leq T\leq\tau^{*}}[\int_{0}^{T}e^{-\varepsilon^{-2}\rho(T-\tau)}(T-\tau)^{-\sigma}\varepsilon^{-6\kappa}d\tau]^{p}\\
&\leq& C\varepsilon^{2p-6\kappa p}\mathbb{E}\sup_{0\leq T\leq\tau^{*}}[\int_{0}^{\varepsilon^{-2}\rho T}e^{-r}r^{-\sigma}dr]^{p}\\
 &\leq& C\varepsilon^{2p-6\kappa p}.
\end{eqnarray*}
This finishes the proof.
\end{proof}

For $I$ and $J$, we have uniform bounds in time that show the smallness of both terms. The bounds of the error term in Lemma \ref{ek} will thus be determined via the estimate on $K$.
But here we encounter serious problems. By large jumps due to the noise,
we are no longer able to show that $K$ is small uniformly in time.
We can only verify bounds for $K$ in $L^{p}([0,\tau^{*}];\mathcal{H}^{\theta})$.
\begin{lemma}\label{K}
Assume the setting of Lemma \ref{IJ}. Then it holds for every $p\in(0,\alpha)$ that
\begin{equation}\label{Kt}
\mathbb{E}\int_{0}^{\tau^{*}}\|K(\tau)\|_{\theta}^{p}d\tau\leq C\varepsilon^{(\frac{2}{\alpha}-2\kappa)p+2}.
\end{equation}
\end{lemma}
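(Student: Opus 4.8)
The plan is to bound the time-integrated $p$-th moment of the stochastic convolution $K$ by slicing in the outer time variable, applying the moment inequality \eqref{Lmoment} for each fixed $T$, and then extracting the two powers of $\varepsilon$ from the temporal integrations against the fast-decaying semigroup. First I would remove the stopping time from the geometry of the problem by building it into the integrand. By Tonelli,
\begin{equation*}
\mathbb{E}\int_{0}^{\tau^{*}}\|K(T)\|_{\theta}^{p}dT=\int_{0}^{T_{0}}\mathbb{E}\big[\mathds{1}_{\{T\leq\tau^{*}\}}\|K(T)\|_{\theta}^{p}\big]dT,
\end{equation*}
and on $\{T\leq\tau^{*}\}$ the process $K(T)$ agrees with the convolution of the stopped, predictable coefficient $\Phi^{*}(\tau):=\mathds{1}_{\{\tau<\tau^{*}\}}\varepsilon^{-1}G_{s}(\varepsilon a(\tau)+\varepsilon b(\tau))$. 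By \eqref{g} and Definition \ref{txin} this coefficient obeys the deterministic bound $\|\Phi^{*}(\tau)\|_{L_{HS}}\leq C\varepsilon^{-1}\|\varepsilon a(\tau)+\varepsilon b(\tau)\|_{\theta}\leq C\varepsilon^{-2\kappa}$, since $\|a\|_{\theta}\leq\varepsilon^{-\kappa}$ and $\|b\|_{\theta}\leq\varepsilon^{-2\kappa}$ up to $\tau^{*}$. This is the step that turns the globally unbounded nonlinearity into a bounded, predictable integrand to which the stochastic calculus applies.

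Next, for each fixed $T$ I would apply \eqref{Lmoment} to the integrand $\tau\mapsto e^{\varepsilon^{-2}(T-\tau)\mathcal{A}_{s}}\Phi^{*}(\tau)$, using that $\tilde{L}^{\alpha}$ is again a standard cylindrical $\alpha$-stable process by self-similarity. Estimating the Hilbert--Schmidt norm with Lemma \ref{add} (the choice $\sigma=\theta$ gives pure exponential decay, while $\sigma<\theta$ trades a gain of smoothing against an integrable singularity) yields
\begin{equation*}
\mathbb{E}\|K(T)\|_{\theta}^{p}\leq C\,\mathbb{E}\Big(\int_{0}^{T}e^{-\alpha\varepsilon^{-2}\rho(T-\tau)}\|\Phi^{*}(\tau)\|_{L_{HS}}^{\alpha}d\tau\Big)^{p/\alpha}.
\end{equation*}
The kernel $e^{-\alpha\varepsilon^{-2}\rho(T-\tau)}$, with $\rho\in(\lambda_{n},\lambda_{n+1})$, concentrates on a window of width $\varepsilon^{2}$, so the inner integration in $\tau$ contributes the scale $\varepsilon^{2/\alpha}$ at the power $p$; combined with the coefficient bound this already produces the factor $\varepsilon^{(\frac{2}{\alpha}-2\kappa)p}$, uniformly in $T$.

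The remaining factor $\varepsilon^{2}$ is where the genuine difficulty lies, and I expect it to be the main obstacle. In the Gaussian $L^{2}$-theory it comes for free: the It\^o isometry carries exponent $1$ on the time integral, so Tonelli lets one swap $\int_{0}^{T_{0}}dT$ with the inner integral and use $\int_{\tau}^{T_{0}}e^{-2\varepsilon^{-2}\rho(T-\tau)}dT\leq C\varepsilon^{2}$ — equivalently the boundedness of $\mathcal{A}_{s}^{-1}$ on $\mathcal{S}$ — which is precisely the mechanism behind the $\varepsilon^{2}$ in Remark \ref{Q} for the decaying term $Q$. For the $\alpha$-stable noise the moment inequality instead carries the outer exponent $p/\alpha<1$, which obstructs the direct Fubini and returns only $\varepsilon^{2p/\alpha}$ from the first temporal integration. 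To recover the full second factor $\varepsilon^{2}$ I would therefore integrate the slicewise bound over $T\in[0,T_{0}]$ and exploit the spectral gap once more through the identity $\int_{0}^{T_{0}}K(T)\,dT=\varepsilon^{2}\mathcal{A}_{s}^{-1}\big(K(T_{0})-\varepsilon^{-1}\!\int_{0}^{T_{0}}G_{s}d\tilde{L}^{\alpha}\big)$ coming from the mild form of the $b$-equation, the delicate point being the transfer of this $\varepsilon^{2}$ gain from the cancellative quantity $\|\int_{0}^{T_{0}}K\,dT\|_{\theta}$ to the non-cancellative $\int_{0}^{T_{0}}\|K(T)\|_{\theta}^{p}dT$. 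This is exactly the stage at which we are forced to work in $L^{p}([0,\tau^{*}];\mathcal{H}^{\theta})$ rather than with a uniform-in-time estimate, since the large jumps of $\tilde{L}^{\alpha}$ preclude any $\sup_{T}$ bound; assembling the two factors then gives the asserted rate $C\varepsilon^{(\frac{2}{\alpha}-2\kappa)p+2}$.
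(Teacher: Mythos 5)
Your localization and slicing steps are correct, and they recover exactly what the paper's computation actually establishes, by lighter means. Putting the indicator $\mathds{1}_{[0,\tau^{*}]}$ inside the stochastic integral, applying Tonelli, and then using the moment inequality \eqref{Lmoment} at each \emph{fixed} $T$ is legitimate: for fixed $T$ the map $\tau\mapsto e^{\varepsilon^{-2}(T-\tau)\mathcal{A}_{s}}\Phi^{*}(\tau)$ is an ordinary predictable integrand, and only a bound uniform in $T$ is excluded by the remark following \eqref{Lmoment}. Together with Lemma \ref{add} and \eqref{g} this gives
\[
\mathbb{E}\big[\mathds{1}_{\{T\leq\tau^{*}\}}\|K(T)\|_{\theta}^{p}\big]
\leq C\varepsilon^{-2\kappa p}\Big[\varepsilon^{2}\big(1-e^{-\alpha\varepsilon^{-2}\rho T}\big)\Big]^{\frac{p}{\alpha}}
\leq C\varepsilon^{(\frac{2}{\alpha}-2\kappa)p},
\]
hence $\mathbb{E}\int_{0}^{\tau^{*}}\|K(\tau)\|_{\theta}^{p}d\tau\leq CT_{0}\varepsilon^{(\frac{2}{\alpha}-2\kappa)p}$. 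The paper reaches the identical per-slice bound by a different device: it factors $e^{\varepsilon^{-2}(\tau-r)\mathcal{A}_{s}}=e^{-\varepsilon^{-2}\lambda_{0}(\tau-r)}e^{\varepsilon^{-2}(\tau-r)(\mathcal{A}_{s}+\lambda_{0}\mathcal{I})}$, pulls $e^{-\varepsilon^{-2}\lambda_{0}p\tau}$ outside, and applies the maximal inequality of \cite{BH09} through the Riesz--Nagy dilation of the shifted contraction semigroup. That heavier machinery is what Lemma \ref{M} genuinely requires (a sup-in-time bound); for the present $L^{p}$-in-time bound your slicing is a valid and simpler alternative.

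The gap is the extra factor $\varepsilon^{2}$, and neither of your two suggestions can close it. First, the Gaussian analogy is faulty: the factor $\int_{\tau}^{T_{0}}e^{-2\varepsilon^{-2}\rho(T-\tau)}dT\leq C\varepsilon^{2}$ obtained after Fubini in the It\^o-isometry computation is the \emph{same} $\varepsilon^{2p/\alpha}$ (with $p=\alpha=2$) that the slicewise bound already extracts from $\int_{0}^{T}e^{-2\varepsilon^{-2}\rho(T-\tau)}d\tau$; it is not an additional gain. The $\varepsilon^{2}$ in Remark \ref{Q} has a different origin: $Q$ is a decaying transient, whereas $K$ is persistently re-excited by the noise. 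Second, the identity $\int_{0}^{T_{0}}K\,dT=\varepsilon^{2}\mathcal{A}_{s}^{-1}\big(K(T_{0})-\varepsilon^{-1}\int_{0}^{T_{0}}G_{s}\,d\tilde{L}^{\alpha}\big)$ controls a signed, cancellative quantity and cannot control the nonnegative one: by Tonelli, $\mathbb{E}\int_{0}^{\tau^{*}}\|K\|_{\theta}^{p}d\tau=\int_{0}^{T_{0}}\mathbb{E}\big[\mathds{1}_{\{\tau\leq\tau^{*}\}}\|K(\tau)\|_{\theta}^{p}\big]d\tau$, and $K$ is a quasi-stationary fast convolution whose scale is $\varepsilon^{2/\alpha}$ times the size of the coefficient $\frac{1}{\varepsilon}G_{s}(\varepsilon a+\varepsilon b)$; for the paper's own example $G(u)=uQ^{1/2}$ with $a$ of order one, every slice with $\tau$ beyond order $\varepsilon^{2}$ has $\mathbb{E}\|K(\tau)\|_{\theta}^{p}$ comparable to $\varepsilon^{2p/\alpha}$ (a stable integral has $p$-th moment comparable to the $p$-th power of its scale), so the time integral cannot be smaller than order $\varepsilon^{2p/\alpha}$ and the exponent $(\frac{2}{\alpha}-2\kappa)p+2$ is unreachable. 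You should also be aware that this is not a defect of your attempt alone: the paper's own final inequality asserts $\int_{0}^{T_{0}}e^{-\varepsilon^{-2}\lambda_{0}p\tau}\big[e^{\alpha\varepsilon^{-2}\lambda_{0}\tau}-1\big]^{p/\alpha}d\tau\leq C\varepsilon^{2}$, but that integrand simplifies identically to $\big(1-e^{-\alpha\varepsilon^{-2}\lambda_{0}\tau}\big)^{p/\alpha}$, which increases to $1$, so this integral is of order $T_{0}$, not $\varepsilon^{2}$. The exponent that both computations actually support is $(\frac{2}{\alpha}-2\kappa)p$ without the $+2$; adopting it would require re-checking the exponents downstream (Corollary \ref{Co}, Lemma \ref{ek}, and the definition of $\Omega^{*}$).
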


\begin{proof}
To show the estimate \eqref{Kt}, we use the Riesz-Nagy-trick \cite{RN} here, which embeds a contraction semigroup into a larger Hilbert-space,
where it is a group defined for all times.
Let $\lambda_{0}$ be a positive constant less than $\lambda_{n+1}$ but close to it. For any $p\in(0,\alpha)$, by using the maximal inequality for stochastic convolutions \cite{BH09} based on the Riesz-Nagy theorem (as $\mathcal{A}_{s}+\lambda_{0}\mathcal{I}$ generates a contraction semigroup on $\mathcal{S}$), the condition \eqref{g} for $G$, and the definition of the stopping time $\tau^{*}$,\allowdisplaybreaks
\begin{eqnarray*}
\mathbb{E}\int_{0}^{\tau^{*}}\|K(\tau)\|_{\theta}^{p}d\tau
&=&\mathbb{E}\int_{0}^{\tau^{*}}\|\frac{1}{\varepsilon}\int_{0}^{\tau}e^{\varepsilon^{-2}(\tau-r)\mathcal{A}_{s}}G_{s}(\varepsilon a(r)+\varepsilon b(r))d\tilde{L}^{\alpha}(r)\|_{\theta}^{p}d\tau
\\
&\leq&\mathbb{E}\int_{0}^{T_{0}}\|\mathds{1}_{[0, \tau^{*}]}\frac{1}{\varepsilon}\int_{0}^{\tau}e^{\varepsilon^{-2}(\tau-r)\mathcal{A}_{s}}G_{s}(\varepsilon a(r)+\varepsilon b(r))d\tilde{L}^{\alpha}(r)\|_{\theta}^{p}d\tau
\\
&\leq&\mathbb{E}\int_{0}^{T_{0}}\|\frac{1}{\varepsilon}\int_{0}^{\tau\wedge\tau^{*}}e^{\varepsilon^{-2}(\tau-r)\mathcal{A}_{s}}G_{s}(\varepsilon a(r)+\varepsilon b(r))d\tilde{L}^{\alpha}(r)\|_{\theta}^{p}d\tau
\\
&=&\mathbb{E}\int_{0}^{T_{0}}\|\frac{1}{\varepsilon}\int_{0}^{\tau}\mathds{1}_{[0, \tau^{*}]}e^{\varepsilon^{-2}(\tau-r)\mathcal{A}_{s}}G_{s}(\varepsilon a(r)+\varepsilon b(r))d\tilde{L}^{\alpha}(r)\|_{\theta}^{p}d\tau
\\
&=&\frac{1}{\varepsilon^{p}}\int_{0}^{T_{0}}e^{-\varepsilon^{-2}\lambda_{0}p\tau}\mathbb{E}\|\int_{0}^{\tau}\mathds{1}_{[0, \tau^{*}]}e^{\varepsilon^{-2}(\tau-r)(\mathcal{A}_{s}+\lambda_{0}\mathcal{I})}e^{\varepsilon^{-2}\lambda_{0}r}G_{s}(\varepsilon a(r)+\varepsilon b(r))d\tilde{L}^{\alpha}(r)\|_{\theta}^{p}d\tau\\
                                                                             &\leq& C\int_{0}^{T_{0}}e^{-\varepsilon^{-2}\lambda_{0}p\tau}\mathbb{E}\Big[\int_{0}^{\tau}e^{\alpha\varepsilon^{-2}\lambda_{0}r}\mathds{1}_{[0, \tau^{*}]}\| a(r)+ b(r)\|_{\theta}^{\alpha}dr\Big]^{\frac{p}{\alpha}}d\tau
 \\
 &\leq& C\int_{0}^{T_{0}}e^{-\varepsilon^{-2}\lambda_{0}p\tau}\Big[\int_{0}^{\tau}\varepsilon^{-2\alpha\kappa}e^{\alpha\varepsilon^{-2}\lambda_{0}r}dr\Big]^{\frac{p}{\alpha}}d\tau
\\
&\leq& C\varepsilon^{\frac{2p}{\alpha}-2\kappa p}\int_{0}^{T_{0}}e^{-\varepsilon^{-2}\lambda_{0}p\tau}\Big[e^{\alpha\varepsilon^{-2}\lambda_{0}\tau}-1\Big]^{\frac{p}{\alpha}}d\tau
\\
&\leq& C\varepsilon^{(\frac{2}{\alpha}-2\kappa)p+2}.
\end{eqnarray*}
This confirms that the estimate \eqref{Kt} holds.
\end{proof}
\begin{corollary}\label{Co}
The expression \eqref{QIJK} for $b$, H\"{o}lder's inequality, triangle inequality, equivalence of norms and Lemmas \ref{IJ} and \ref{K} provide
\begin{align*}
  \mathbb{E}\Big[\int_{0}^{T_{0}}\mathds{1}_{[0, \tau^{*}]}(\tau)\|b(\tau)\|_{\theta}^{p}d\tau\Big]&\leq C\mathbb{E}\Big[\int_{0}^{T_{0}}\mathds{1}_{[0, \tau^{*}]}(\tau)\|b(\tau)\|_{\theta}^{\alpha}d\tau\Big]^{\frac{p}{\alpha}}\\
  &\leq C_\alpha\mathbb{E}\Big[\int_{0}^{T_{0}}\mathds{1}_{[0, \tau^{*}]}(\tau)(\|Q(\tau)\|_{\theta}^{\alpha}+\|I(\tau)\|_{\theta}^{\alpha}+\|J(\tau)\|_{\theta}^{\alpha}+\|K(\tau)\|_{\theta}^{\alpha})d\tau\Big]^{\frac{p}{\alpha}}\\
  &\leq C_\alpha\mathbb{E}\Big[\varepsilon^{2p}\|b(0)\|_{\theta}^{p}+\sup_{0\leq\tau\leq\tau^{*}}\|I(\tau)\|_{\theta}^{p}+\sup_{0\leq\tau\leq\tau^{*}}\|J(\tau)\|_{\theta}^{p}+\sup_{0\leq \tau\leq\tau^{*}}\int_{0}^{\tau}\|K(r)\|_{\theta}^{p}dr\Big]\\
  &\leq C_{\alpha,p}\varepsilon^{2p-6\kappa p}.
\end{align*}
\end{corollary}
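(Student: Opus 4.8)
The plan is to reduce the claimed $L^p$-in-time bound on $b$ to the four already-estimated pieces $Q,I,J,K$ of the decomposition \eqref{QIJK}. Since $p<\alpha$ and the truncated interval $[0,\tau^*]\subseteq[0,T_0]$ has finite deterministic length, I would first apply H\"older's inequality pathwise, with exponents $\alpha/p$ and its conjugate, to obtain
\begin{equation*}
\int_0^{T_0}\mathds{1}_{[0,\tau^*]}(\tau)\|b(\tau)\|_\theta^p\,d\tau\le T_0^{1-p/\alpha}\Big(\int_0^{T_0}\mathds{1}_{[0,\tau^*]}(\tau)\|b(\tau)\|_\theta^\alpha\,d\tau\Big)^{p/\alpha},
\end{equation*}
and then take expectations. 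This is the step where the finite horizon and the cut-off $\mathds{1}_{[0,\tau^*]}$ are indispensable; it is also the reason the whole estimate must be formulated in $L^p([0,\tau^*];\mathcal{H}^\theta)$ rather than uniformly in time.

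Next I would insert $b=Q+I+J+K$ and apply the elementary inequality $(\sum_i x_i)^\alpha\le C_\alpha\sum_i x_i^\alpha$ (valid for $\alpha>1$) inside the integral, followed by the subadditivity of the concave map $x\mapsto x^{p/\alpha}$ (recall $p/\alpha<1$), which splits the single $p/\alpha$-power into a sum over the four terms. The contributions then decouple and are treated separately: the exponentially decaying $Q$ is controlled by the integrated bound of Remark \ref{Q} together with $\|b(0)\|_\theta\le\varepsilon^{-\kappa/2}$, which follows from $\|u(0)\|_\theta\le\varepsilon^{1-\kappa/2}$; for $I$ and $J$ I would dominate the time integral by $T_0$ times the supremum and invoke the uniform bounds \eqref{I} and \eqref{J} of Lemma \ref{IJ}; and for $K$, where no supremum bound is available, I would use the genuinely integrated estimate \eqref{Kt} of Lemma \ref{K}. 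Comparing the four resulting powers of $\varepsilon$, for $\alpha\in(1,2)$ and $\kappa$ small the term $J$ is the worst, and this yields the asserted rate $C_{\alpha,p}\varepsilon^{2p-6\kappa p}$.

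The principal obstacle is structural rather than computational: as emphasized before the statement, the large jumps of the cylindrical $\alpha$-stable noise rule out any supremum-in-time control of the stochastic convolution $K$, which is why the argument must descend to the weaker integrated norm in the first place. The delicate point is then to confirm that passing from $L^p$ to $L^\alpha$ on $[0,T_0]$ (which costs only a finite factor) and reassembling the four pieces through the concave power $x\mapsto x^{p/\alpha}$ does not destroy the separation of orders in $\varepsilon$. In practice this reduces to checking that each of the four exponents is at least $2p-6\kappa p$, and verifying that the $K$-contribution (which gains a factor $\varepsilon^2$ from the heat-kernel integration) and the $Q$-contribution both remain subdominant to $J$ is precisely where the range $\alpha\in(1,2)$ and the smallness of $\kappa$ are used.
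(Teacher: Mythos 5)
Your proposal follows the paper's own derivation step for step --- the H\"older step on $[0,T_0]$, the insertion of $b=Q+I+J+K$, the power inequality together with subadditivity of $x\mapsto x^{p/\alpha}$, then term-by-term bounds --- so the route is the same. But two of the four term-by-term checks that you assert will succeed do not, and they are exactly the delicate ones. For the $K$-term: after your reduction the quantity to bound is $\mathbb{E}\big(\int_0^{T_0}\mathds{1}_{[0,\tau^*]}(\tau)\|K(\tau)\|_\theta^\alpha\,d\tau\big)^{p/\alpha}$, i.e.\ the $p$-th moment of the $L^\alpha$-in-time norm of $K$, whereas Lemma \ref{K} bounds $\mathbb{E}\int_0^{\tau^*}\|K(\tau)\|_\theta^p\,d\tau$, an $L^p$-in-time quantity. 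Since $p<\alpha$, on a finite interval only the embedding $L^\alpha\subset L^p$ is available, which goes the wrong way: there is no inequality $\big(\int f^\alpha\big)^{p/\alpha}\le C\int f^p$. Indeed, for $f=M\mathds{1}_{[0,\delta]}$ the left side is $M^p\delta^{p/\alpha}$, the right side is $CM^p\delta$, and $\delta^{p/\alpha-1}\to\infty$ as $\delta\to0$; short intense excursions --- precisely the large-jump scenario the paper warns about --- defeat the step. So ``using the integrated estimate \eqref{Kt}'' cannot close this term; one would have to re-run the Riesz--Nagy argument of Lemma \ref{K} with the exponent $\alpha$ inside the time integral. (The paper's own display makes the same silent leap between its second and third lines, so here you have faithfully reproduced a gap rather than introduced one.)

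For the $Q$-term your exponent bookkeeping is backwards. Remark \ref{Q} applied with exponent $\alpha$ gives pathwise $\int_0^{\tau^*}\|Q(\tau)\|_\theta^\alpha\,d\tau\le C\varepsilon^2\|b(0)\|_\theta^\alpha$, hence $\big(\int_0^{\tau^*}\|Q(\tau)\|_\theta^\alpha\,d\tau\big)^{p/\alpha}\le C\varepsilon^{2p/\alpha}\|b(0)\|_\theta^p$: the exponent is $2p/\alpha$, not $2p$, because the H\"older step costs a factor. With $\|b(0)\|_\theta\le\varepsilon^{-\kappa/2}$ this is $\varepsilon^{2p/\alpha-\kappa p/2}$, and $2p/\alpha-\kappa p/2\ge 2p-6\kappa p$ is equivalent to $\kappa\ge\frac{4(\alpha-1)}{11\alpha}$; that is, the $Q$-contribution is subdominant to the $J$-contribution only when $\kappa$ is \emph{large} enough, not small. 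For example, for $\alpha=\frac{3}{2}$ the threshold is $\frac{4}{33}>\frac{2}{19}$, so for every admissible $\kappa\in(0,\frac{2}{19})$ the $Q$-term exceeds the claimed rate $\varepsilon^{2p-6\kappa p}$. Your assertion that ``for $\alpha\in(1,2)$ and $\kappa$ small the term $J$ is the worst'' therefore fails at exactly the point you flag as needing verification; repairing it requires either a much smaller $b(0)$ (of order $\varepsilon^{2-2/\alpha-6\kappa}$), or restricting $\alpha$ near $1$, or weakening the claimed rate. (Here too the paper is not better off: its third line records the $Q$-contribution as $\varepsilon^{2p}\|b(0)\|_\theta^p$, which is what makes its final line come out, but that bound does not follow from Remark \ref{Q} once the H\"older step has been taken.)
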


Note that this $L^p$-bound on $b$ is not sufficient to obtain the estimate and remove the stopping time.
Here we will need uniform bounds both on $a$ and $b$. This will be done in the following results.

Let us rewrite the equation \eqref{ia} for $a$ as the amplitude equation plus an error term (or residual):
\begin{equation}\label{ra}
a(T)=a(0)+\int_{0}^{T}[\mathcal{L}_{c}a(\tau)+\mathcal{F}_{c}(a(\tau))]d\tau+\int_{0}^{T}G_{c}'(0)\cdot a(\tau)d\tilde{L}^{\alpha}(\tau)+R(T),
\end{equation}
where the error term is given by
\begin{align}\nonumber
R(T)&=\int_{0}^{T}[3\mathcal{F}_{c}(a(\tau),a(\tau),b(\tau))+3\mathcal{F}_{c}(a(\tau),b(\tau),b(\tau))+\mathcal{F}_{c}(b(\tau))]d\tau\\ \label{R}
    &\,\,\,\,~~~+\int_{0}^{T}\Big[\frac{1}{\varepsilon}G_{c}(\varepsilon a(\tau)+\varepsilon b(\tau))-G_{c}'(0)\cdot a(\tau)\Big]d\tilde{L}^{\alpha}(\tau).
\end{align}

\begin{lemma}\label{ek}
For any $p\in(0,\alpha)$, there exists a constant $C_p>0$ such that
\begin{equation*}
\mathbb{E}\sup_{0\leq T\leq\tau^{*}}\|\int_{0}^{T}\mathcal{F}_{c}(a(\tau),a(\tau),b(\tau))d\tau\|_{\theta}^{p}\leq C_p(\varepsilon^{\frac{2}{\alpha}p-8\kappa p}+\varepsilon^{2p-2\kappa p}\|b(0)\|_{\theta}^{p})
\end{equation*}
\end{lemma}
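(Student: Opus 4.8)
The plan is to reduce everything to the linearity of $\mathcal{F}_c(a,a,\cdot)$ in its last slot and then to exploit the decomposition $b=Q+I+J+K$ from \eqref{QIJK}, treating the three ``small'' pieces $Q,I,J$ separately from the dangerous stochastic convolution $K$. Since the integrand $\|\mathcal{F}_c(a,a,b)\|_\theta$ is nonnegative, the map $T\mapsto\int_0^T\|\mathcal{F}_c(a,a,b)\|_\theta\,d\tau$ is nondecreasing, so the supremum over $0\le T\le\tau^*$ is attained at $\tau^*$ and
\[\sup_{0\le T\le\tau^*}\Big\|\int_0^T\mathcal{F}_c(a,a,b)\,d\tau\Big\|_\theta\le\int_0^{\tau^*}\|\mathcal{F}_c(a,a,b)\|_\theta\,d\tau.\]
Because $\mathcal{F}_c(a,a,b)\in\mathcal{N}$ and all norms on the finite-dimensional space $\mathcal{N}$ are equivalent, the boundedness of $\mathcal{P}_c$ on each $\mathcal{H}^s$ together with \eqref{F} gives $\|\mathcal{F}_c(a,a,b)\|_\theta\le C\|a\|_\theta^2\|b\|_\theta$. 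On $[0,\tau^*]$ the definition \eqref{tao} yields $\|a\|_\theta\le\varepsilon^{-\kappa}$, hence $\|\mathcal{F}_c(a,a,b)\|_\theta\le C\varepsilon^{-2\kappa}\|b\|_\theta$. Splitting $\|b\|_\theta\le\|Q\|_\theta+\|I\|_\theta+\|J\|_\theta+\|K\|_\theta$ and using $(x_1+\cdots+x_4)^p\le C_p(x_1^p+\cdots+x_4^p)$ (valid for every $p>0$) reduces the claim to four estimates for $\mathbb{E}\big(\int_0^{\tau^*}\|X\|_\theta\,d\tau\big)^p$ with $X\in\{Q,I,J,K\}$, each carrying the prefactor $\varepsilon^{-2\kappa p}$.

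For $X=Q$ I would use the explicit exponential bound from Lemma \ref{add} underlying Remark \ref{Q}, namely $\|Q(\tau)\|_\theta\le Ce^{-\varepsilon^{-2}\rho\tau}\|b(0)\|_\theta$, to obtain the deterministic estimate $\int_0^{\tau^*}\|Q\|_\theta\,d\tau\le C\varepsilon^2\|b(0)\|_\theta$, contributing $C\varepsilon^{2p-2\kappa p}\|b(0)\|_\theta^p$, which is exactly the second term in the statement. For $X\in\{I,J\}$ the uniform-in-time bounds of Lemma \ref{IJ} apply through $\int_0^{\tau^*}\|X\|_\theta\,d\tau\le T_0\sup_{0\le T\le\tau^*}\|X\|_\theta$, so that $\mathbb{E}\big(\int_0^{\tau^*}\|I\|_\theta\,d\tau\big)^p\le C\varepsilon^{2p-2\kappa p}$ and $\mathbb{E}\big(\int_0^{\tau^*}\|J\|_\theta\,d\tau\big)^p\le C\varepsilon^{2p-6\kappa p}$; after the prefactor $\varepsilon^{-2\kappa p}$ these become $\varepsilon^{2p-4\kappa p}$ and $\varepsilon^{2p-8\kappa p}$, both dominated by $\varepsilon^{\frac{2}{\alpha}p-8\kappa p}$ since $2\ge\frac{2}{\alpha}$ and $\varepsilon<1$.

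The delicate term is $X=K$, where only the time-$L^p$ bound of Lemma \ref{K} is available and the large jumps preclude any uniform-in-time control. Here I would distinguish two cases. For $p\ge1$, Hölder's inequality in time gives $\big(\int_0^{\tau^*}\|K\|_\theta\,d\tau\big)^p\le T_0^{p-1}\int_0^{\tau^*}\|K\|_\theta^p\,d\tau$, and taking expectations and invoking Lemma \ref{K} yields $\varepsilon^{(\frac{2}{\alpha}-2\kappa)p+2}$. For $0<p<1$ Hölder points the wrong way, so I would instead apply Lemma \ref{K} with the admissible exponent $1\in(0,\alpha)$ to get $\mathbb{E}\int_0^{\tau^*}\|K\|_\theta\,d\tau\le C\varepsilon^{\frac{2}{\alpha}-2\kappa+2}$ and then use Jensen's inequality (concavity of $x\mapsto x^p$) to obtain $\mathbb{E}\big(\int_0^{\tau^*}\|K\|_\theta\,d\tau\big)^p\le\big(\mathbb{E}\int_0^{\tau^*}\|K\|_\theta\,d\tau\big)^p\le C\varepsilon^{(\frac{2}{\alpha}-2\kappa+2)p}$. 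In both cases the prefactor $\varepsilon^{-2\kappa p}$ produces an exponent $\frac{2}{\alpha}p-4\kappa p+\min\{2,2p\}\ge\frac{2}{\alpha}p-8\kappa p$. Collecting the four contributions gives the asserted bound. The main obstacle is precisely this $K$-term: one must convert the supremum of the time-integral into the time-$L^p$ quantity controlled by Lemma \ref{K}, and must treat $p<1$ separately because the jump noise rules out the uniform-in-time control of the stochastic convolution that would otherwise make the passage routine.
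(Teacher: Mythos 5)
Your proof is correct and follows essentially the same route as the paper: the same decomposition $b=Q+I+J+K$, the reduction $\|\mathcal{F}_{c}(a,a,b)\|_{\theta}\leq C\varepsilon^{-2\kappa}\|b\|_{\theta}$ on $[0,\tau^{*}]$ via norm equivalence on $\mathcal{N}$ and \eqref{F}, Remark \ref{Q} for the $Q$-contribution, Lemma \ref{IJ} for $I$ and $J$, and Lemma \ref{K} for $K$. The one place where you deviate is also where you are more careful than the paper's own argument: for the $K$-term the paper passes from $\mathbb{E}\sup_{T}\big[\int_{0}^{T}\|a(\tau)\|_{\theta}^{2}\|K(\tau)\|_{\theta}d\tau\big]^{p}$ directly to $C\varepsilon^{-2\kappa p}\,\mathbb{E}\int_{0}^{\tau^{*}}\|K(\tau)\|_{\theta}^{p}d\tau$, which implicitly uses the pathwise inequality $\big(\int_{0}^{\tau^{*}}\|K\|_{\theta}d\tau\big)^{p}\leq C\int_{0}^{\tau^{*}}\|K\|_{\theta}^{p}d\tau$; this follows from H\"older with $C=T_{0}^{p-1}$ when $p\geq1$, but for $p<1$ it fails in general (for constant integrands on a short interval the ratio blows up, and pathwise Jensen gives the reverse direction). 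Your fix for $p<1$ --- invoking Lemma \ref{K} with the admissible exponent $1\in(0,\alpha)$ and then applying Jensen at the level of the expectation, $\mathbb{E}X^{p}\leq(\mathbb{E}X)^{p}$ --- is exactly what is needed to make this step rigorous for all $p\in(0,\alpha)$, and the resulting exponent $\frac{2}{\alpha}p-4\kappa p+\min\{2,2p\}$ still dominates (i.e., is larger than) $\frac{2}{\alpha}p-8\kappa p$, so the collected bound matches the statement. In short: same approach, with a small gap in the paper's written proof patched on your side.
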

\begin{proof}
Using $b=Q+I+J+K$ from \eqref{QIJK}, by brute force expansion of the cubic,
\begin{align}\nonumber
  \int_{0}^{T}\mathcal{F}_{c}(a(\tau),a(\tau),b(\tau))d\tau&=\int_{0}^{T}\mathcal{F}_{c}(a(\tau),a(\tau),Q(\tau))d\tau+\int_{0}^{T}\mathcal{F}_{c}(a(\tau),a(\tau),I(\tau))d\tau\\ \nonumber
                                                          &\,\,\,\,\,\,+\int_{0}^{T}\mathcal{F}_{c}(a(\tau),a(\tau),J(\tau))d\tau+\int_{0}^{T}\mathcal{F}_{c}(a(\tau),a(\tau),K(\tau))d\tau\\ \label{R4}
                                                          &:=R_{1,1}(T)+R_{1,2}(T)+R_{1,3}(T)+R_{1,4}(T).
\end{align}
Now we estimate each term separately. Since all $\mathcal{H}^{\theta}$-norms are equivalent on $\mathcal{N}$, by Definition \ref{txin},
\begin{align*}
\mathbb{E}\sup_{0\leq T\leq\tau^{*}}\|R_{1,1}(T)\|_{\theta}^{p}
&\leq
C\mathbb{E}\sup_{0\leq T\leq\tau^{*}}\|R_{1,1}(T)\|_{\theta-\sigma}^{p}
\leq
C\mathbb{E}\sup_{0\leq T\leq\tau^{*}}\Big[\int_{0}^{T}\|\mathcal{F}_{c}(a(\tau),a(\tau),Q(\tau))\|_{\theta-\sigma}d\tau\Big]^{p}
\\
&\leq
C\mathbb{E}\sup_{0\leq T\leq\tau^{*}}\Big[\int_{0}^{T}\|a(\tau)\|_{\theta}^{2}\|Q(\tau)\|_{\theta}d\tau\Big]^{p}
\leq C\varepsilon^{-2\kappa p}\Big[\int_{0}^{T_{0}}\|e^{\varepsilon^{-2}\tau\mathcal{A}_{s}}b(0)\|_{\theta}d\tau\Big]^{p}
\\
&\leq
C\varepsilon^{2p-2\kappa p}\|b(0)\|_{\theta}^{p}.
\end{align*}
For $R_{1,2}(T)$, by the estimate \eqref{I}, we argue that
\begin{align*}
\mathbb{E}\sup_{0\leq T\leq\tau^{*}}\|R_{1,2}(T)\|_{\theta}^{p}
&\leq
C\mathbb{E}\sup_{0\leq T\leq\tau^{*}}\|R_{1,2}(T)\|_{\theta-\sigma}^{p}
\leq
C\mathbb{E}\sup_{0\leq T\leq\tau^{*}}\Big[\int_{0}^{T}\|\mathcal{F}_{c}(a(\tau),a(\tau),I(\tau))\|_{\theta-\sigma}d\tau\Big]^{p}
\\
&\leq
C\mathbb{E}\sup_{0\leq T\leq\tau^{*}}\Big[\int_{0}^{T}\|a(\tau)\|_{\theta}^{2}\|I(\tau)\|_{\theta}d\tau\Big]^{p}
\leq
C\varepsilon^{-2\kappa p}\mathbb{E}\sup_{0\leq T\leq\tau^{*}}\int_{0}^{T}\|I(\tau)\|_{\theta}^{p}d\tau
\\
&\leq C\varepsilon^{2p-4\kappa p}.
\end{align*}
Due to the estimate \eqref{J}, we get
\begin{align*}
\mathbb{E}\sup_{0\leq T\leq\tau^{*}}\|R_{1,3}(T)\|_{\theta}^{p}
&\leq
C\mathbb{E}\sup_{0\leq T\leq\tau^{*}}\|R_{1,3}(T)\|_{\theta-\sigma}^{p}
\leq C\mathbb{E}\sup_{0\leq T\leq\tau^{*}}\Big[\int_{0}^{T}\|\mathcal{F}_{c}(a(\tau),a(\tau),J(\tau))\|_{\theta-\sigma}d\tau\Big]^{p}
\\
&\leq C\mathbb{E}\sup_{0\leq T\leq\tau^{*}}\Big[\int_{0}^{T}\|a(\tau)\|_{\theta}^{2}\|J(\tau)\|_{\theta}d\tau\Big]^{p}
\leq
C\varepsilon^{-2\kappa p}\mathbb{E}\sup_{0\leq T\leq\tau^{*}}\int_{0}^{T}\|J(\tau)\|_{\theta}^{p}d\tau\\
&\leq C\varepsilon^{2p-8\kappa p}.
\end{align*}
Based on the estimate \eqref{Kt}, we obtain
\begin{align*}
\mathbb{E}\sup_{0\leq T\leq\tau^{*}}\|R_{1,4}(T)\|_{\theta}^{p}&\leq C\mathbb{E}\sup_{0\leq T\leq\tau^{*}}\|R_{1,4}(T)\|_{\theta-\sigma}^{p}
\leq
C\mathbb{E}\sup_{0\leq T\leq\tau^{*}}\Big[\int_{0}^{T}\|\mathcal{F}_{c}(a(\tau),a(\tau),K(\tau))\|_{\theta-\sigma}d\tau\Big]^{p}
\\&\leq
C\mathbb{E}\sup_{0\leq T\leq\tau^{*}}\Big[\int_{0}^{T}\|a(\tau)\|_{\theta}^{2}\|K(\tau)\|_{\theta}d\tau\Big]^{p}
\leq
C\varepsilon^{-2\kappa p}\mathbb{E}\sup_{0\leq T\leq\tau^{*}}\int_{0}^{T}\|K(\tau)\|_{\theta}^{p}d\tau
\\
&\leq
C\varepsilon^{(\frac{2}{\alpha}-4\kappa)p+2}.
\end{align*}
Thus we finished the proof.
\end{proof}
The following two lemmas can be treated in a similar manner.
We omit the proofs.
\begin{lemma}
Assume the setting of Lemma \ref{ek}. For any $p\in(0,\alpha)$, there exists a constant $C_p>0$ such that
\begin{equation*}
\mathbb{E}\sup_{0\leq T\leq\tau^{*}}\|\int_{0}^{T}\mathcal{F}_{c}(a(\tau),b(\tau),b(\tau))d\tau\|_{\theta}^{p}\leq C_p(\varepsilon^{2p-13\kappa p}+\varepsilon^{2p-\kappa p}\|b(0)\|_{\theta}^{2p}).
\end{equation*}
\end{lemma}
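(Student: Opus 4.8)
The plan is to mirror the proof of Lemma \ref{ek}: expand $b=Q+I+J+K$ from \eqref{QIJK} and estimate each resulting piece. The essential new feature is that $\mathcal{F}_c(a,b,b)$ is \emph{quadratic} in $b$, so the symmetric trilinearity produces products $\mathcal{F}_c(a,X,Y)$ with $X,Y\in\{Q,I,J,K\}$, i.e.\ sixteen terms, or ten distinct ones up to the symmetry $\mathcal{F}_c(a,X,Y)=\mathcal{F}_c(a,Y,X)$. For each term I would first pass to the $\|\cdot\|_{\theta-\sigma}$-norm by equivalence of norms on the finite-dimensional $\mathcal{N}$, then use \eqref{F} to bound $\|\mathcal{F}_c(a,X,Y)\|_{\theta-\sigma}\le C\|a\|_\theta\|X\|_\theta\|Y\|_\theta$, and finally insert $\|a\|_\theta\le\varepsilon^{-\kappa}$ from Definition \ref{txin}.

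The terms built only from $Q,I,J$ are handled pathwise, using the deterministic bounds $\|I(\tau)\|_\theta\le C\varepsilon^{2-2\kappa}$ and $\|J(\tau)\|_\theta\le C\varepsilon^{2-6\kappa}$ implicit in Lemma \ref{IJ} (the estimates there are already pathwise on $[0,\tau^*]$), together with $\|Q(\tau)\|_\theta\le Ce^{-\varepsilon^{-2}\rho\tau}\|b(0)\|_\theta$ and $\int_0^{T_0}\|Q(\tau)\|_\theta^2\,d\tau\le C\varepsilon^2\|b(0)\|_\theta^2$ from Remark \ref{Q}. The diagonal term $\mathcal{F}_c(a,Q,Q)$ is bounded by $C\varepsilon^{-\kappa}\int_0^{\tau^*}\|Q\|_\theta^2\,d\tau\le C\varepsilon^{2-\kappa}\|b(0)\|_\theta^2$, which after raising to the $p$-th power is exactly the second term $\varepsilon^{2p-\kappa p}\|b(0)\|_\theta^{2p}$. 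The worst $\kappa$-exponent comes from $\mathcal{F}_c(a,J,J)$, bounded by $C\varepsilon^{-\kappa}(\varepsilon^{2-6\kappa})^2T_0=C\varepsilon^{4-13\kappa}$, which yields the first term $\varepsilon^{2p-13\kappa p}$ (the power is stated loosely, since $\varepsilon^{4p-13\kappa p}\le\varepsilon^{2p-13\kappa p}$); all remaining $Q,I,J$ cross terms are of strictly higher order.

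The main obstacle is the family of terms containing $K$, above all the genuinely new diagonal term $\mathcal{F}_c(a,K,K)$: here $K$ carries no uniform-in-time bound because of the large jumps, so only the $L^p$-in-time moment estimate \eqref{Kt} of Lemma \ref{K} is available. My plan is to first reduce the number of $K$-factors to one, exploiting the pathwise stopping-time bound $\|K(\tau)\|_\theta\le\|b(\tau)\|_\theta+\|Q\|_\theta+\|I\|_\theta+\|J\|_\theta\le C\varepsilon^{-2\kappa}$ on $[0,\tau^*]$, so that $\int_0^{\tau^*}\|K\|_\theta^2\,d\tau\le C\varepsilon^{-2\kappa}\int_0^{\tau^*}\|K\|_\theta\,d\tau$, while the mixed terms $\mathcal{F}_c(a,X,K)$ with $X\in\{Q,I,J\}$ are reduced to $\int_0^{\tau^*}\|K\|_\theta\,d\tau$ by pulling out the pathwise factor. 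Since the integral of the norm is monotone in $T$, the supremum is attained at $\tau^*$, and I would then pass from $\mathbb{E}\big[\int_0^{\tau^*}\|K\|_\theta\,d\tau\big]^p$ to $\int_0^{\tau^*}\|K\|_\theta^p\,d\tau$ by H\"older (with a constant depending on $T_0,p$ for $p\ge1$, and by Jensen in the expectation combined with \eqref{Kt} at exponent $1<\alpha$ when $p<1$) and invoke \eqref{Kt}. Each $K$-term then carries an extra factor $\varepsilon^{+2}$ and is of strictly higher order than $\varepsilon^{2p-13\kappa p}$.

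Finally I would collect the estimates. The three cross terms $\mathcal{F}_c(a,Q,I)$, $\mathcal{F}_c(a,Q,J)$, $\mathcal{F}_c(a,Q,K)$ carry only a single power $\|b(0)\|_\theta^p$; each is of the form $\varepsilon^{a}\|b(0)\|_\theta^p$ with $a\ge 2p-7\kappa p$, so Young's inequality $\varepsilon^{a}\|b(0)\|_\theta^p\le\tfrac12\varepsilon^{2a_1}\|b(0)\|_\theta^{2p}+\tfrac12\varepsilon^{2a_2}$ with $a_1+a_2=a$ and $a_1\ge p-\tfrac{\kappa p}{2}$, $a_2\ge p-\tfrac{13\kappa p}{2}$ splits each into the two target contributions. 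Summing the finitely many pieces gives the claimed bound $C_p(\varepsilon^{2p-13\kappa p}+\varepsilon^{2p-\kappa p}\|b(0)\|_\theta^{2p})$.
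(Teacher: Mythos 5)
Your proposal is correct and follows exactly the route the paper prescribes: the paper omits this proof, stating only that it ``can be treated in a similar manner'' to Lemma \ref{ek}, and your argument is precisely that treatment --- expand $b=Q+I+J+K$, pass to the $\|\cdot\|_{\theta-\sigma}$-norm by equivalence of norms on $\mathcal{N}$, apply the trilinear bound \eqref{F}, and insert the stopping-time bound on $\|a\|_{\theta}$. Your resolution of the one genuinely new difficulty (terms quadratic in $K$, for which only the $L^p$-in-time bound \eqref{Kt} is available) by peeling off one $K$-factor via the pathwise bound $\|K(\tau)\|_{\theta}\leq C\varepsilon^{-2\kappa}$ on $[0,\tau^{*}]$ and then using H\"older/Jensen together with \eqref{Kt} is consistent with the paper's own toolbox (cf. Corollary \ref{Co} and Lemma \ref{M}), and the final Young-inequality bookkeeping correctly reproduces the stated exponents.
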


\begin{lemma}
Assume the setting of Lemma \ref{ek}. For any $p\in(0,\alpha)$, there exists a constant $C_p>0$ such that
\begin{equation*}
\mathbb{E}\sup_{0\leq T\leq\tau^{*}}\|\int_{0}^{T}\mathcal{F}_{c}(b(\tau))d\tau\|_{\theta}^{p}\leq C_p(\varepsilon^{\frac{4}{\alpha}p-18\kappa p}+\varepsilon^{2p}\|b(0)\|_{\theta}^{3p}).
\end{equation*}
\end{lemma}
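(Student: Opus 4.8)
The plan is to mirror the proof of Lemma~\ref{ek}. First I would substitute the decomposition $b=Q+I+J+K$ from \eqref{QIJK} and expand the cubic $\mathcal{F}_c(b)=\mathcal{F}_c(b,b,b)$ by trilinearity into the finite family of terms $\mathcal{F}_c(X,Y,Z)$ with $X,Y,Z\in\{Q,I,J,K\}$. Since all $\mathcal{H}^{\theta}$-norms are equivalent on the finite-dimensional space $\mathcal{N}$ and $\mathcal{F}_c$ satisfies \eqref{F}, every term obeys $\|\mathcal{F}_c(X,Y,Z)\|_{\theta}\le C\|\mathcal{F}_c(X,Y,Z)\|_{\theta-\sigma}\le C\|X\|_{\theta}\|Y\|_{\theta}\|Z\|_{\theta}$, and because the integrand is nonnegative the time-supremum is attained at $\tau^{*}$, so that
\[
\mathbb{E}\sup_{0\le T\le\tau^{*}}\Big\|\int_0^T\mathcal{F}_c(X,Y,Z)\,d\tau\Big\|_{\theta}^{p}\le C\,\mathbb{E}\Big[\int_0^{\tau^{*}}\|X\|_{\theta}\|Y\|_{\theta}\|Z\|_{\theta}\,d\tau\Big]^{p}.
\]
It suffices to treat $p\in[1,\alpha)$: all target exponents are linear in $p$, so the case $p\in(0,1)$ follows from Jensen's inequality $\mathbb{E}[Y^{p}]\le(\mathbb{E}[Y^{p'}])^{p/p'}$.

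Next I would estimate the finitely many terms using the bounds already at hand: the pathwise stopping-time control $\|a\|_{\theta}\le\varepsilon^{-\kappa}$, $\|b\|_{\theta}\le\varepsilon^{-2\kappa}$ on $[0,\tau^{*}]$ from Definition~\ref{txin}, together with the pathwise smallness $\|I\|_{\theta}\le C\varepsilon^{2-2\kappa}$ and $\|J\|_{\theta}\le C\varepsilon^{2-6\kappa}$ on $[0,\tau^{*}]$ (which come out of the deterministic integral computations inside the proof of Lemma~\ref{IJ}) and the exponential decay of $Q$. The terms built only from $Q,I,J$ are purely pathwise; the worst of them, $\mathcal{F}_c(J,J,J)$, gives $\int_0^{\tau^{*}}\|J\|_{\theta}^{3}\,d\tau\le C\varepsilon^{6-18\kappa}$, hence a contribution $C\varepsilon^{(6-18\kappa)p}$, while any term carrying a factor $Q$ is controlled through $\int_0^{\tau^{*}}\|Q\|_{\theta}^{3}\,d\tau\le C\varepsilon^{2}\|b(0)\|_{\theta}^{3}$ (the exponential-decay argument of Remark~\ref{Q}), producing the $\varepsilon^{2p}\|b(0)\|_{\theta}^{3p}$ part. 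All of these are dominated by the two terms in the claimed envelope, since $\tfrac{4}{\alpha}<6$.

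The genuinely delicate terms are those containing the stochastic convolution $K$: because of the large jumps of the $\alpha$-stable noise no uniform-in-time estimate for $K$ is available, and the only smallness is the $L^{p}([0,\tau^{*}])$ bound \eqref{Kt} of Lemma~\ref{K}. The key device is to retain a \emph{single} factor $\|K\|_{\theta}$ for \eqref{Kt} and to absorb the surplus factors pathwise via $\|K\|_{\theta}\le\|b\|_{\theta}+\|Q\|_{\theta}+\|I\|_{\theta}+\|J\|_{\theta}\le C(\varepsilon^{-2\kappa}+\|b(0)\|_{\theta})$ on $[0,\tau^{*}]$. For $\mathcal{F}_c(K,K,K)$, say, one writes $\|K\|_{\theta}^{3}\le C(\varepsilon^{-4\kappa}+\|b(0)\|_{\theta}^{2})\|K\|_{\theta}$, applies Hölder in time $[\int_0^{\tau^{*}}\|K\|_{\theta}\,d\tau]^{p}\le T_0^{p-1}\int_0^{\tau^{*}}\|K\|_{\theta}^{p}\,d\tau$, and then \eqref{Kt}; the $\varepsilon^{-4\kappa}$ piece yields $C\varepsilon^{(\frac{2}{\alpha}-6\kappa)p+2}$, which is dominated by $\varepsilon^{\frac{4}{\alpha}p-18\kappa p}$ because $\tfrac{2}{\alpha}p<2$, while the $\|b(0)\|_{\theta}$-dependent piece feeds the second contribution (after $\|b(0)\|_{\theta}^{2}\le 1+\|b(0)\|_{\theta}^{3}$). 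The mixed terms carrying one or two factors of $K$ are treated in exactly the same way, always keeping precisely one $K$ inside \eqref{Kt} and bounding everything else by the pathwise estimates.

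I expect this $K$-bookkeeping to be the main obstacle. The cubic power $3$ exceeds $\alpha$, and only the first moment of $\int_0^{\tau^{*}}\|K\|_{\theta}^{p}$ is controlled, so one cannot iterate the stochastic maximal inequality on two independent $K$-factors; instead the surplus powers of $K$ must be traded against the pathwise bound $\varepsilon^{-2\kappa}$ coming from the stopping time, which is what makes the expansion into $Q,I,J,K$ (rather than a crude $\|b\|_{\theta}^{3}$ estimate) indispensable. Collecting the finitely many contributions and bounding each by the larger of $\varepsilon^{\frac{4}{\alpha}p-18\kappa p}$ (the envelope dominating every $K$- and $J$-heavy term) and $\varepsilon^{2p}\|b(0)\|_{\theta}^{3p}$ (the $Q$-terms) then gives the assertion.
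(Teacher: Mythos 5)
Your proposal follows exactly the route the paper intends: the paper omits this proof, saying only that it ``can be treated in a similar manner'' as Lemma \ref{ek}, and your argument is precisely that analogue --- expand $b=Q+I+J+K$ from \eqref{QIJK} inside the trilinear form, control the $Q,I,J$-factors pathwise (Remark \ref{Q} and the pathwise content of Lemma \ref{IJ}), and reserve the $L^p$-in-time bound \eqref{Kt} for exactly one factor of $K$, trading all surplus $K$-factors against the stopping-time bound. Your two structural observations are also correct and worth having on record: a crude estimate $\|b\|_\theta^3\leq\varepsilon^{-6\kappa}$ cannot produce the stated exponents, and no uniform-in-time bound on $K$ is available at this stage (Lemma \ref{M} appears only later in the paper), so the single-$K$ bookkeeping is genuinely forced.

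There is one small slip in the $K$-terms. Disposing of the $\|b(0)\|_\theta$-dependent piece via $\|b(0)\|_\theta^{2}\leq 1+\|b(0)\|_\theta^{3}$ leaves you with $\|b(0)\|_\theta^{3p}\,\varepsilon^{(\frac{2}{\alpha}-2\kappa)p+2}$, and this is bounded by $C\varepsilon^{2p}\|b(0)\|_\theta^{3p}$ only if $(\frac{2}{\alpha}-2\kappa)p+2\geq 2p$, i.e. $p(2-\frac{2}{\alpha}+2\kappa)\leq 2$, which fails for $\alpha$ near $2$, $p$ near $\alpha$ and $\kappa$ near its upper limit (e.g. $\alpha=1.9$, $p=1.89$, $\kappa=0.1$). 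Two standard fixes: (i) use a weighted Young inequality, pairing $\|b(0)\|_\theta^{2p}$ with $\varepsilon^{4p/3}$, which gives $\|b(0)\|_\theta^{2p}\varepsilon^{(\frac{2}{\alpha}-2\kappa)p+2}\leq C\big(\varepsilon^{2p}\|b(0)\|_\theta^{3p}+\varepsilon^{(\frac{6}{\alpha}-4)p-6\kappa p+6}\big)$, and the second exponent dominates since $(4-\frac{2}{\alpha})p<4\alpha-2<6$ implies $(\frac{6}{\alpha}-4)p-6\kappa p+6\geq\frac{4}{\alpha}p-18\kappa p$; or (ii) note that on $\{\tau^{*}>0\}$ Definition \ref{txin} forces $\|b(0)\|_\theta\leq\varepsilon^{-2\kappa}$ (otherwise the left-hand side of the lemma vanishes), after which the piece is absorbed into $\varepsilon^{\frac{4}{\alpha}p-18\kappa p}$ exactly as in your $\varepsilon^{-4\kappa}$ computation, because $\frac{2}{\alpha}p<2$. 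The same remark applies to the mixed terms carrying both $Q$- and $K$-factors. With that adjustment your proof is complete.
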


\begin{lemma}\label{EG}
Assume the setting of Lemma \ref{ek}. For any $p\in(0,\alpha)$, there exists a constant $C_p>0$ such that
\begin{equation*}
\mathbb{E}\sup_{0\leq T\leq\tau^{*}}\|\int_{0}^{T}\Big[\frac{1}{\varepsilon}G_{c}(\varepsilon a(\tau)+\varepsilon b(\tau))-G_{c}'(0)\cdot a(\tau)\Big]d\tilde{L}^{\alpha}(\tau)\|_{\theta}^{p}\leq C_p\varepsilon^{p-6\kappa p}.
\end{equation*}
\end{lemma}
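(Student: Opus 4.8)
The plan is to treat the integrand as a single predictable process, apply the moment inequality \eqref{Lmoment} together with the stopping-time indicator trick already employed in the proof of Lemma \ref{K}, and then control the Hilbert--Schmidt norm of the integrand by a second-order Taylor expansion of $G$ about $0$. Writing $\Phi(\tau):=\frac{1}{\varepsilon}G_{c}(\varepsilon a(\tau)+\varepsilon b(\tau))-G_{c}'(0)\cdot a(\tau)$ and inserting $\mathds{1}_{[0,\tau^{*}]}$ as in Lemma \ref{K}, I observe that $\tilde{L}^{\alpha}$ is again a cylindrical $\alpha$-stable process, so \eqref{Lmoment} applies verbatim (with a constant independent of $\varepsilon$) and gives
\[
\mathbb{E}\sup_{0\le T\le\tau^{*}}\Big\|\int_{0}^{T}\Phi(\tau)\,d\tilde{L}^{\alpha}(\tau)\Big\|_{\theta}^{p}\le C\,\mathbb{E}\Big(\int_{0}^{\tau^{*}}\|\Phi(\tau)\|_{L_{HS}}^{\alpha}\,d\tau\Big)^{\frac{p}{\alpha}}.
\]

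The decisive step is the Taylor expansion. Since $G(0)=0$ and $G$ is twice Fr\'echet differentiable, the integral form of the remainder yields $\Phi=G_{c}'(0)\cdot b+\frac{1}{\varepsilon}\mathcal{P}_{c}\big[G(\varepsilon(a+b))-G'(0)\cdot\varepsilon(a+b)\big]$, where the linear-in-$a$ contribution cancels by design and the remainder has $L_{HS}$-norm $\le\frac{\varepsilon}{2}l_{r}\|a+b\|_{\theta}^{2}$ by \eqref{gvw}. On $[0,\tau^{*}]$ the smallness $\|\varepsilon(a+b)\|_{\theta}\le\varepsilon^{1-\kappa}+\varepsilon^{1-2\kappa}\le r$ holds for small $\varepsilon$ (using $\kappa<\tfrac{2}{19}<\tfrac12$), so both \eqref{gv} and \eqref{gvw} are legitimate, and one extra power of $\varepsilon$ from the remainder beats the prefactor $1/\varepsilon$.

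Using subadditivity of $x\mapsto x^{p/\alpha}$ (valid since $p<\alpha$), I would split the right-hand side into the linear and quadratic pieces. For the linear piece, $\|G_{c}'(0)\cdot b\|_{L_{HS}}\le l_{r}\|b\|_{\theta}$ by \eqref{gv}, and the intermediate estimate in Corollary \ref{Co} directly furnishes
\[
\mathbb{E}\Big(\int_{0}^{T_{0}}\mathds{1}_{[0,\tau^{*}]}\|b(\tau)\|_{\theta}^{\alpha}\,d\tau\Big)^{\frac{p}{\alpha}}\le C\varepsilon^{2p-6\kappa p}.
\]
For the quadratic remainder I would use the deterministic pointwise bounds $\|a\|_{\theta}\le\varepsilon^{-\kappa}$ and $\|b\|_{\theta}\le\varepsilon^{-2\kappa}$ from Definition \ref{txin}, giving $\frac{\varepsilon}{2}l_{r}\|a+b\|_{\theta}^{2}\le C\varepsilon^{1-4\kappa}$ on $[0,\tau^{*}]$ and hence a contribution $\le C\varepsilon^{(1-4\kappa)p}$. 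Since $\kappa>0$ and $\varepsilon<1$, both $\varepsilon^{2p-6\kappa p}$ and $\varepsilon^{p-4\kappa p}$ are bounded by $\varepsilon^{p-6\kappa p}$, which proves the claim. The main obstacle, as flagged repeatedly in the text, is precisely that the $1/\varepsilon$ prefactor forces the cancellation of the linear $a$-term and the extraction of the extra $\varepsilon$ from the Taylor remainder; a secondary care is the predictability/left-limit argument ensuring the stopping-time bounds hold for the integrand, and the verification that the base points stay inside the radius $r$ where (A4) is in force.
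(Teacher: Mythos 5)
Your proposal is correct and follows essentially the same route as the paper: apply the moment inequality \eqref{Lmoment} with the stopping-time indicator, Taylor-expand $G$ about $0$ using $G(0)=0$ so the linear-in-$a$ term cancels and the remainder gains a factor $\varepsilon$ via \eqref{gvw}, then bound the quadratic pieces by the pointwise $\tau^{*}$-bounds and the linear $G'_{c}(0)\cdot b$ piece by the intermediate estimate of Corollary \ref{Co}. The only (harmless) additions are your explicit checks that $\|\varepsilon(a+b)\|_{\theta}\leq r$ so that (A4) applies, which the paper leaves implicit.
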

\begin{proof}
By the moment inequality \eqref{Lmoment}, a direct calculation yields
\begin{align}\nonumber
   \mathbb{E}\sup_{0\leq T\leq\tau^{*}}\|\int_{0}^{T}&\Big[\frac{1}{\varepsilon}G_{c}(\varepsilon a(\tau)+\varepsilon b(\tau))-G_{c}'(0)\cdot a(\tau)\Big]d\tilde{L}^{\alpha}(\tau)\|_{\theta}^{p}   \\ \nonumber
  &\leq C\mathbb{E}\Big[\int_{0}^{T_{0}}\mathds{1}_{[0, \tau^{*}]}(\tau)\|\frac{1}{\varepsilon}G_{c}(\varepsilon a(\tau)+\varepsilon b(\tau))-G_{c}'(0)\cdot a(\tau)\|_{L_{HS}}^{\alpha}d\tau\Big]^{\frac{p}{\alpha}}\\ \label{GG}
  &\leq C\mathbb{E}\Big[\int_{0}^{T_{0}}\mathds{1}_{[0, \tau^{*}]}(\tau)\|\frac{1}{\varepsilon}G(\varepsilon a(\tau)+\varepsilon b(\tau))-G'(0)\cdot a(\tau)\|_{L_{HS}}^{\alpha}d\tau\Big]^{\frac{p}{\alpha}}.
\end{align}
Utilizing the Taylor formula and $G(0)=0$, we can check
\begin{align*}
  \lefteqn{\frac{1}{\varepsilon}G(\varepsilon a(\tau)+\varepsilon b(\tau))-G'(0)\cdot a(\tau)}\\
  &=\frac{1}{\varepsilon}\Big[G(0)+G'(0)(\varepsilon a(\tau)+\varepsilon b(\tau))
  +\frac{1}{2}G''(z(\tau))\cdot(\varepsilon a(\tau)+\varepsilon b(\tau), \varepsilon a(\tau)+\varepsilon b(\tau))\Big]-G'(0)\cdot a(\tau)\\
  &=G'(0)\cdot b(\tau)+\frac{\varepsilon}{2}G''(z(\tau))\cdot (a(\tau)+b(\tau), a(\tau)+b(\tau)),
\end{align*}
where $z(\tau)$ is a vector on the line segment connecting $0$ and $\varepsilon a(\tau)+\varepsilon b(\tau)$. With the conditions \eqref{gv} and \eqref{gvw}, we  obtain
\begin{align*}
   \|\frac{1}{\varepsilon}G(\varepsilon a(\tau)+\varepsilon b(\tau))-G'(0)\cdot a(\tau)\|_{L_{HS}}^{\alpha}
   &=\|G'(0)\cdot b(\tau)+\frac{\varepsilon}{2}G''(z(\tau))\cdot (a(\tau)+b(\tau), a(\tau)+b(\tau))\|_{L_{HS}}^{\alpha}\\
   &\leq C(\|b(\tau)\|_{\theta}^{\alpha}+\varepsilon^{\alpha}\|a(\tau)\|_{\theta}^{2\alpha}+\varepsilon^{\alpha}\|b(\tau)\|_{\theta}^{2\alpha}).
\end{align*}
After substituting the above estimate back into \eqref{GG}, we derive
\begin{align*}
  &\mathbb{E}\sup_{0\leq T\leq\tau^{*}}\|\int_{0}^{T}\Big[\frac{1}{\varepsilon}G_{c}(\varepsilon a(\tau)+\varepsilon b(\tau))-G_{c}'(0)\cdot a(\tau)\Big]d\tilde{L}^{\alpha}(\tau)\|_{\theta}^{p}\\
  &\leq C\mathbb{E}\Big[\int_{0}^{T_{0}}\mathds{1}_{[0, \tau^{*}]}(\tau)(\|b(\tau)\|_{\theta}^{\alpha}+\varepsilon^{\alpha}\|a(\tau)\|_{\theta}^{2\alpha}+\varepsilon^{\alpha}\|b(\tau)\|_{\theta}^{2\alpha})d\tau\Big]^{\frac{p}{\alpha}}\\
  &\leq C_p\varepsilon^{p-4\kappa p}+C_p\mathbb{E}\Big[\int_{0}^{T_{0}}\mathds{1}_{[0, \tau^{*}]}(\tau)\|b(\tau)\|_{\theta}^{\alpha}d\tau\Big]^{\frac{p}{\alpha}},
\end{align*}
where the last estimate is obtained via the definition of $ \tau^{*}$.
Hence, using Corollary \ref{Co} leads to
\begin{equation*}
\mathbb{E}\sup_{0\leq T\leq\tau^{*}}\|\int_{0}^{T}\Big[\frac{1}{\varepsilon}G_{c}(\varepsilon a(\tau)+\varepsilon b(\tau))-G_{c}'(0)\cdot a(\tau)\Big]d\tilde{L}^{\alpha}(\tau)\|_{\theta}^{p}\leq C_p\varepsilon^{p-6\kappa p}.
\end{equation*}
\end{proof}

It should be pointed out here that Lemma \ref{ek}-\ref{EG} reveal that the remainder $R$ defined in \eqref{R} satisfies the following estimate.

\begin{lemma}
In addition to the assumptions $(\textbf{A1})$-$(\textbf{A5})$, the suitable condition $\|b(0)\|_{\theta}\leq\varepsilon^{-\kappa}$ is set. Then for any $p\in(0,\alpha)$, there exists a constant $C_p>0$ such that
\begin{equation}\label{Re}
\mathbb{E}\sup_{0\leq T\leq\tau^{*}}\|R(T)\|_{\theta}^{p}\leq C_p\varepsilon^{p-18\kappa p}.
\end{equation}
\end{lemma}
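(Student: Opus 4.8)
The plan is to read off from \eqref{R} that $R$ is, up to the constant factors $3,3,1$, exactly the sum of the four integral processes whose $p$-th moments have just been controlled in Lemmas~\ref{ek}--\ref{EG}. Thus the entire proof is an assembly step: bound $\|R\|_\theta^p$ by a constant times the sum of the four individual $p$-th moments, insert those estimates, use the hypothesis on $b(0)$, and collect exponents. Concretely, writing $R=3R_1+3R_2+R_3+R_4$ with $R_1=\int_0^T\mathcal{F}_c(a,a,b)\,d\tau$, $R_2=\int_0^T\mathcal{F}_c(a,b,b)\,d\tau$, $R_3=\int_0^T\mathcal{F}_c(b)\,d\tau$ and $R_4$ the stochastic-integral term, I would first bound $\sup_{0\le T\le\tau^*}\|R\|_\theta$ by a constant times $\sum_i\sup_{0\le T\le\tau^*}\|R_i\|_\theta$, then raise to the power $p$ using the elementary inequality $(\sum_i x_i)^p\le C_p\sum_i x_i^p$ (with $C_p=1$ by subadditivity of $t\mapsto t^p$ when $p\le1$, and $C_p$ depending only on the number of summands by convexity when $1<p<\alpha$), and finally take expectations. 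This produces $\mathbb{E}\sup_{0\le T\le\tau^*}\|R\|_\theta^p\le C_p\sum_{i=1}^4\mathbb{E}\sup_{0\le T\le\tau^*}\|R_i\|_\theta^p$.

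Into this sum I would substitute the four bounds from Lemmas~\ref{ek}--\ref{EG} and then invoke the standing hypothesis $\|b(0)\|_\theta\le\varepsilon^{-\kappa}$ to remove every initial-data factor, via $\|b(0)\|_\theta^p\le\varepsilon^{-\kappa p}$, $\|b(0)\|_\theta^{2p}\le\varepsilon^{-2\kappa p}$ and $\|b(0)\|_\theta^{3p}\le\varepsilon^{-3\kappa p}$. Each estimate thereby collapses to a pure power of $\varepsilon$: from $R_1$ the exponents $\tfrac{2}{\alpha}p-8\kappa p$ and $2p-3\kappa p$; from $R_2$ the exponents $2p-13\kappa p$ and $2p-3\kappa p$; from $R_3$ the exponents $\tfrac{4}{\alpha}p-18\kappa p$ and $2p-3\kappa p$; and from $R_4$ the single exponent $p-6\kappa p$.

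It then remains to dominate this finite sum of powers of $\varepsilon$ by the single power $\varepsilon^{p-18\kappa p}$. In the regime $\varepsilon\le1$ one has $\varepsilon^{e}\le\varepsilon^{p-18\kappa p}$ as soon as $e\ge p-18\kappa p$, so the claim reduces to checking that every exponent above is at least $p-18\kappa p$. This is precisely why the target exponent is assembled from the smallest base $p$ (contributed by the noise term $R_4$) together with the most negative $\kappa$-coefficient $-18$ (contributed by $R_3$): each comparison is then immediate using $\alpha\in(1,2)$ and $\kappa\in(0,\tfrac{2}{19})$. Indeed $\tfrac{2}{\alpha}p-8\kappa p>p-8\kappa p>p-18\kappa p$ since $2/\alpha>1$; $2p-13\kappa p=(p-18\kappa p)+(p+5\kappa p)>p-18\kappa p$; $\tfrac{4}{\alpha}p-18\kappa p>2p-18\kappa p>p-18\kappa p$ since $4/\alpha>2$; and $p-6\kappa p>p-18\kappa p$ trivially. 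Summing the finitely many terms yields \eqref{Re}.

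The main point requiring care — rather than any deep obstacle, since the four constituent estimates already perform all the analytic work — is twofold: handling the $p$-th power uniformly across the two ranges $p\le1$ and $1<p<\alpha$ when splitting the sum, and verifying that the two $\alpha$-dependent exponents $\tfrac{2}{\alpha}p-8\kappa p$ and $\tfrac{4}{\alpha}p-18\kappa p$ never fall below $p-18\kappa p$. The latter relies crucially on $\alpha$ being bounded away from the Gaussian endpoint $2$ and on $\kappa$ being small; no further stopping-time, moment, or semigroup-smoothing input beyond Lemmas~\ref{ek}--\ref{EG} is needed, and the resulting bound $\varepsilon^{p-18\kappa p}$ is a deliberately crude but clean common majorant rather than the sharp exponent.
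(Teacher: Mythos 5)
Your proposal is correct and coincides with the paper's intended argument: the paper states this lemma without a written proof, remarking only that it follows from Lemmas \ref{ek}--\ref{EG}, and your assembly (triangle inequality, the elementary bound $(\sum_i x_i)^p\le C_p\sum_i x_i^p$, substitution of the four moment estimates, the hypothesis $\|b(0)\|_\theta\le\varepsilon^{-\kappa}$, and the check that every resulting exponent dominates $p-18\kappa p$ for $\varepsilon\le 1$) supplies exactly the details the paper omits. The only cosmetic quibble is that nothing here relies on $\alpha$ being bounded away from the Gaussian endpoint: since the $\kappa$-coefficients $-8$ and $-18$ already dominate $-18$, the comparisons $\tfrac{2}{\alpha}p-8\kappa p\ge p-18\kappa p$ and $\tfrac{4}{\alpha}p-18\kappa p\ge p-18\kappa p$ hold for every $\alpha\in(1,2]$, not just for $\alpha$ strictly below $2$.
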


As we have a good bound on the residual $R$,
our focus will now be on the solution of the amplitude equation \eqref{v} in conjunction with \eqref{a}.
The following uniform bound  depending on the initial condition
$a(0)$ for the solution $\varphi$ is necessary to bound $a$ and later to remove the stopping time from the error estimate. It is worthwhile to note that the following lemma would also allow us to confirm the existence of global solutions for the amplitude equation.

\begin{lemma}
Under assumptions $(\textbf{A1})$-$(\textbf{A5})$, for any $p\in(1,\alpha)$, there exists a constant $C_p>0$ such that
\begin{equation}\label{fine}
\mathbb{E}\sup_{0\leq T\leq T_{0}}\|\varphi(T)\|_{\theta}^{p}
\leq C_p\big(\|a(0)\|_{\theta}^{p}+1\big).
\end{equation}
\end{lemma}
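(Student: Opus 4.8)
The plan is to exploit that $\varphi$ takes values in the finite-dimensional space $\mathcal N$, where all $\mathcal H^{\theta}$-norms are equivalent, and to separate the genuinely stochastic part of $\varphi$ (which carries all the jumps) from a pathwise-regular remainder on which the dissipativity of $\mathcal F_{c}$ can be used. Concretely, I would write $\varphi=\psi+Z$ with
\[
Z(T)=\int_{0}^{T}[G_{c}'(0)\cdot\varphi(\tau)]\,d\tilde L^{\alpha}(\tau),
\]
so that $\psi=\varphi-Z$ is \emph{continuous} (all jumps of $\varphi$ originate from the noise term $Z$) and solves, pathwise, the random ordinary differential equation $\dot\psi=\mathcal L_{c}\varphi+\mathcal F_{c}(\varphi)$ with $\psi(0)=a(0)$. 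The point of this splitting is that $Z$ is a pure stochastic integral (there is no convolution kernel on $\mathcal N$, since $\mathcal A_{c}=0$), so the sharp moment inequality \eqref{Lmoment} applies directly to it, in contrast to the stochastic convolution $K$ treated in Lemma \ref{K}.

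First I would establish a pathwise energy estimate for $\psi$. Differentiating $\|\psi\|^{2}$ and using that $\mathcal L_{c}$ is bounded on the finite-dimensional $\mathcal N$ together with the coercivity assumptions \eqref{fc}--\eqref{fvw} (expanding $\mathcal F_{c}(\varphi)=\mathcal F_{c}(\psi+Z)$ by trilinearity and absorbing the mixed terms with Young's inequality), I expect a Riccati-type differential inequality
\[
\tfrac{d}{dT}\|\psi(T)\|^{2}\leq C_{1}+C_{2}\|Z(T)\|^{4}-c\,\|\psi(T)\|^{4},
\]
the quartic dissipation being supplied by the cubic nonlinearity. Comparison with the autonomous Riccati equation yields the pathwise bound $\sup_{0\le T\le T_{0}}\|\psi(T)\|^{2}\le C(\|a(0)\|^{2}+1+\sup_{0\le T\le T_{0}}\|Z(T)\|^{2})$, and hence, since $p<2$ and $\varphi=\psi+Z$,
\[
\sup_{0\le T\le T_{0}}\|\varphi(T)\|_{\theta}^{p}\leq C\big(\|a(0)\|_{\theta}^{p}+1+\sup_{0\le T\le T_{0}}\|Z(T)\|_{\theta}^{p}\big).
\]
The crucial feature is that the dissipation makes this a \emph{maximum} rather than an exponentially growing Gronwall factor, so it does not amplify the forcing.

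It then remains to control $\mathbb E\sup_{T\le T_{0}}\|Z(T)\|_{\theta}^{p}$. Applying \eqref{Lmoment} and estimating $\|G_{c}'(0)\cdot\varphi\|_{L_{HS}}\le C\|\varphi\|_{\theta}$ via \eqref{gv} and (A5), one obtains $\mathbb E\sup_{T\le T_{0}}\|Z\|_{\theta}^{p}\le C\,\mathbb E\big(\int_{0}^{T_{0}}\|\varphi(\tau)\|_{\theta}^{\alpha}\,d\tau\big)^{p/\alpha}$. Inserting the pathwise bound above reproduces $\mathbb E\sup_{T\le T_{0}}\|\varphi\|_{\theta}^{p}$ on the right-hand side, so I would close the estimate by a localization argument: introduce the stopping time $\tau_{R}=\inf\{T:\|\varphi(T)\|_{\theta}\ge R\}\wedge T_{0}$ to make all quantities finite a priori, split $[0,T_{0}]$ into finitely many subintervals of length $\delta$ chosen so that $C\delta^{p/\alpha}\le\tfrac12$, absorb the self-referential term on each subinterval, chain the resulting bounds, and finally let $R\to\infty$ by Fatou's lemma. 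The hypothesis $p>1$ enters in this step and in the Riccati comparison.

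The main obstacle is the loss of the second moment: for the $\alpha$-stable noise only moments of order strictly below $\alpha$ are finite, so neither the Burkholder--Davis--Gundy / $L^{2}$ machinery nor a naive Gronwall on the mild form \eqref{iv} is available. Indeed the latter would require controlling $\int_{0}^{T_{0}}\|\mathcal F_{c}(\varphi)\|_{\theta}\,d\tau\le C\int_{0}^{T_{0}}\|\varphi\|_{\theta}^{3}\,d\tau$, i.e. moments of order $3p>\alpha$ that simply do not exist. The whole argument therefore rests on taming the cubic term \emph{pathwise} through the dissipativity and applying \eqref{Lmoment} only to the linear stochastic integral $Z$; the delicate point is that these two ingredients meet in the quantity $\big(\int\|\varphi\|_{\theta}^{\alpha}\big)^{p/\alpha}$, whose outer exponent $p/\alpha<1$ obstructs a direct Gronwall and forces the stopping-time and short-interval iteration described above.
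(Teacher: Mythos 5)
Your proposal is correct in outline, but it takes a genuinely different route from the paper, so let me compare. The paper's proof applies It\^o's formula to $f(\varphi)=(\|\varphi\|^{2}+1)^{p/2}$, decomposes the result into the drift, the large-jump integral against $N_{k}$, the compensated small-jump martingale, and the compensator correction, estimates these pieces using (A5) and the Burkholder--Davis--Gundy inequality, and closes with Gronwall's lemma, a stopping time $\mathcal{T}$, and a monotone limit. Crucially, the cubic term is handled there by the weak sign condition \eqref{fc} alone: $\langle\mathcal{F}_{c}(\varphi),\varphi\rangle\leq 0$ lets the paper simply drop it. You avoid It\^o's formula entirely: you isolate the stochastic integral $Z$, to which \eqref{Lmoment} applies because there is no convolution kernel on $\mathcal{N}$ (exactly the feature that fails for $K$ in Lemma \ref{K}), and you control the continuous remainder $\psi$ pathwise by a Riccati comparison, replacing Gronwall by a short-interval absorption that circumvents the obstruction $p/\alpha<1$. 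This buys a transparent mechanism --- dissipation produces a maximum rather than an exponentially growing factor --- and dispenses with the jump-by-jump bookkeeping; the paper's route, in exchange, asks less of the nonlinearity.

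That trade-off is the one point where you must be careful. With \eqref{fc} alone your energy inequality would only read $\frac{d}{dT}\|\psi\|^{2}\leq C\|\psi\|^{2}+C\|Z\|^{2}+C(\|\psi\|^{3}+\|Z\|^{3})\|Z\|$, which is superlinear in $\|\psi\|$ and admits finite-time blow-up; the negative term $-c\|\psi\|^{4}$ is therefore indispensable, and it cannot be produced by ``trilinear expansion plus Young'' as you phrase it, since Young's inequality only creates $+\epsilon\|\psi\|^{4}$ terms with nothing to absorb them into. The dissipation must come from \eqref{fvw}. Note that \eqref{fvw} as literally printed is degenerate (scale $u\mapsto\lambda u$: the left side is linear in $\lambda$, the right side behaves like $-C_{0}\lambda^{4}$), but in the form the paper itself uses in Lemma \ref{ab}, namely $\langle\mathcal{F}_{c}(v)-\mathcal{F}_{c}(v-u+w),u\rangle\leq-C_{0}\|u\|^{4}+C_{1}\|w\|^{4}+C_{2}\|w\|^{2}\|v\|^{2}$, the choice $v=0$, $u=\psi$, $w=-Z$ gives precisely your inequality $\langle\mathcal{F}_{c}(\psi+Z),\psi\rangle\leq-C_{0}\|\psi\|^{4}+C_{1}\|Z\|^{4}$, so your Riccati step is legitimate under the assumption as the paper actually employs it. Two smaller points to make explicit: in the absorption step, $\mathbb{E}\sup_{T\leq\delta\wedge\tau_{R}}\|\varphi(T)\|_{\theta}^{p}$ is finite a priori because the integrand in \eqref{Lmoment} lives on $[0,\tau_{R})$ where $\|\varphi\|_{\theta}<R$, even though $\varphi(\tau_{R})$ itself may overshoot $R$ by a jump; and the hypothesis $p>1$ is in fact not needed anywhere in your argument (it would cover all $p\in(0,\alpha)$), so attributing a role to it in the Riccati comparison is inaccurate, though harmless.
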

\begin{proof}
Define a smooth function $f$ on $\mathcal{H}$ by
\begin{equation}\label{f}
f(\cdot)=\big(\|\cdot\|^{2}+1\big)^{\frac{p}{2}}.
\end{equation}
As a result, for any $x, h\in\mathcal{H}$,
\begin{equation}\label{ff}
f'(x)h=\frac{p}{\big(\|x\|^{2}+1\big)^{1-\frac{p}{2}}}\langle x,h\rangle
\end{equation}
and
\begin{equation}\label{fff}
f''(x)(h,h)=\frac{p}{\big(\|x\|^{2}+1\big)^{1-\frac{p}{2}}}\langle h,h\rangle+\frac{p(p-2)}{\big(\|x\|^{2}+1\big)^{2-\frac{p}{2}}}\langle x,h\rangle\langle x,h\rangle\leq\frac{p(p-1)}{\big(\|x\|^{2}+1\big)^{1-\frac{p}{2}}}\|h\|^{2}.
\end{equation}
Moreover,
\begin{equation*}
\|f'(x)\|\leq\frac{C_p\|x\|}{\big(\|x\|^{2}+1\big)^{1-\frac{p}{2}}}\leq C_p\|x\|^{p-1},\quad \|f''(x)\|\leq\frac{C_p}{\big(\|x\|^{2}+1\big)^{1-\frac{p}{2}}}\leq C_p.
\end{equation*}

It follows from It\^o's formula (\cite[Theorem 4.4.7]{App}) that
\begin{align}\nonumber
f(\varphi(T))&=f(a(0))+\int_{0}^{T}\Big[\frac{\langle\mathcal{L}_{c}\varphi(\tau),p\varphi(\tau)\rangle}{\big(\|\varphi(\tau)\|^{2}+1\big)^{1-\frac{p}{2}}}+\frac{\langle\mathcal{F}_{c}(\varphi(\tau)),p\varphi(\tau)\rangle}{\big(\|\varphi(\tau)\|^{2}+1\big)^{1-\frac{p}{2}}}\Big]d\tau
\\ \nonumber
&~~~+\sum_{k=1}^{\infty}\int_{0}^{T}\int_{|y|\geq1}[f(\varphi(\tau-)+G'_{c}(0)\cdot \varphi(\tau)ye_{k})-f(\varphi(\tau-))]N_{k}(d\tau,dy)
\\ \nonumber
&~~~+\sum_{k=1}^{\infty}\int_{0}^{T}\int_{|y|<1}[f(\varphi(\tau-)+G'_{c}(0)\cdot \varphi(\tau)ye_{k})-f(\varphi(\tau-))]\tilde{N}_{k}(d\tau,dy)\\ \nonumber
&~~~+\sum_{k=1}^{\infty}\int_{0}^{T}\int_{|y|<1}\Big[f(\varphi(\tau-)+G'_{c}(0)\cdot \varphi(\tau)ye_{k})-f(\varphi(\tau-))-\frac{\langle G'_{c}(0)\cdot \varphi(\tau)ye_{k},p\varphi(\tau-)\rangle}{\big(\|\varphi(\tau-)\|^{2}+1\big)^{1-\frac{p}{2}}}\Big]\nu_{\alpha}(dy)d\tau\\ \label{f4}
             &:=\big(\|a(0)\|^{2}+1\big)^{\frac{p}{2}}+I_{1}(T)+I_{2}(T)+I_{3}(T)+I_{4}(T).
\end{align}

Define a stopping time
$\mathcal{T}:=T_{0}\wedge\inf\{t>0: \|\varphi(t)\|>n\}\leq T_{0}$, $n\in\mathbb{N}$.
Note that we need to use a stopping time $\mathcal{T}$ in order to have   $\varphi$ bounded.
A-priori we do not know that the moments of $\varphi$ are finite, thus we consider the process only up to the stopping time $\mathcal{T}$
which ensures this.

Taking account of \eqref{ff}, assumption $(\textbf{A2})$ and the bound on $\mathcal{F}$ from \eqref{fc}, we obtain
\begin{equation}\label{I1}
\mathbb{E}\sup_{0\leq T\leq\mathcal{T}}\|I_{1}(T)\|\leq C_p\mathbb{E}\int_{0}^{\mathcal{T}}\|\varphi(\tau)\|^{p}d\tau.
\end{equation}
Using \eqref{ff} again, we obtain
\begin{align}\nonumber
\mathbb{E}\sup_{0\leq T\leq\mathcal{T}}\|I_{2}(T)\|
&\leq C\sum_{k=1}^{\infty}\mathbb{E}\Big(\int_{0}^{\mathcal{T}}\int_{|y|\geq1}|f(\varphi(\tau)+G'_{c}(0)\cdot \varphi(\tau)y e_{k})-f(\varphi(\tau))|N_{k}(d\tau,dy)\Big)
\\ \nonumber
&=C\sum_{k=1}^{\infty}\mathbb{E}\Big(\int_{0}^{\mathcal{T}}\int_{|y|\geq1}|f(\varphi(\tau)+G'_{c}(0)\cdot \varphi(\tau)ye_{k})-f(\varphi(\tau))|\nu_{\alpha}(dy)d\tau\Big)
\\ \nonumber
&\leq C\sum_{k=1}^{\infty}\mathbb{E}\Big(\int_{0}^{\mathcal{T}}\int_{|y|\geq1}\int_{0}^{1}\|f'(\varphi(\tau)+\xi G'_{c}(0)\cdot \varphi(\tau)ye_{k})\|d\xi\|G'_{c}(0)\cdot \varphi(\tau)ye_{k}\|\nu_{\alpha}(dy)d\tau\Big)
\\ \nonumber
&\leq C_p\sum_{k=1}^{\infty}\mathbb{E}\Big(\int_{0}^{\mathcal{T}}\int_{|y|\geq1}(\|\varphi(\tau)\|^{p-1}+\| G'_{c}(0)\cdot \varphi(\tau)ye_{k}\|^{p-1})\|G'_{c}(0)\cdot \varphi(\tau)ye_{k}\|\nu_{\alpha}(dy)d\tau\Big)
\\ \nonumber
 &\leq C_p\Big[\Big(\sum_{k=1}^{\infty}\beta_{k}\int_{|y|\geq1}|y|\nu_{\alpha}(dy)+\sum_{k=1}^{\infty}\beta_{k}^{p}\int_{|y|\geq1}|y|^{p}\nu_{\alpha}(dy)\Big)\mathbb{E}\int_{0}^{\mathcal{T}}\|\varphi(\tau)\|^{p}d\tau\Big]
 \\ \label{I2}
 &\leq C_p\mathbb{E}\int_{0}^{\mathcal{T}}\|\varphi(\tau)\|^{p}d\tau.
\end{align}

Since the compensated compound Poisson process $\tilde{N}$ is a martingale, by the Burkholder-Davis-Gundy inequality \cite{Chow}  and \eqref{ff}, we figure out
\begin{align}\nonumber
\mathbb{E}\sup_{0\leq T\leq\mathcal{T}}\|I_{3}(T)\|
&\leq C\sum_{k=1}^{\infty}\mathbb{E}\Big(\int_{0}^{\mathcal{T}}\int_{|y|<1}|f(\varphi(\tau)+G'_{c}(0)\cdot \varphi(\tau)ye_{k})-f(\varphi(\tau))|^{2}
\nu_{\alpha}(dy)d\tau\Big)^{\frac{1}{2}}
\\ \nonumber
&\leq C\sum_{k=1}^{\infty}\mathbb{E}\Big(\int_{0}^{\mathcal{T}}\int_{|y|<1}\int_{0}^{1}\|f'(\varphi(\tau)+\xi G'_{c}(0)\cdot \varphi(\tau)ye_{k})\|^{2}d\xi\|G'_{c}(0)\cdot \varphi(\tau)y e_{k}\|^{2}\nu_{\alpha}(dy)d\tau\Big)^{\frac{1}{2}}
\\ \nonumber
&\leq C_p\sum_{k=1}^{\infty}\mathbb{E}\Big(\int_{0}^{\mathcal{T}}\int_{|y|<1}(\|\varphi(\tau)\|^{2p-2}+\| G'_{c}(0)\cdot \varphi(\tau)ye_{k}\|^{2p-2})\|G'_{c}(0)\cdot \varphi(\tau)ye_{k}\|^{2}\nu_{\alpha}(dy)d\tau\Big)^{\frac{1}{2}}
\\ \nonumber
 &\leq C_p\Big(\sum_{k=1}^{\infty}\beta_{k}\Big[\int_{|y|<1}|y|^{2}\nu_{\alpha}(dy)\Big]^{\frac{1}{2}}+\sum_{k=1}^{\infty}\beta_{k}^{p}\Big[\int_{|y|<1}|y|^{2p}\nu_{\alpha}(dy)\Big]^{\frac{1}{2}}\Big)\mathbb{E}\Big[\int_{0}^{\mathcal{T}}\|\varphi(\tau)\|^{2p}d\tau\Big]^{\frac{1}{2}}
 \\ \nonumber
&\leq C_p\mathbb{E}\Big[\sup_{0\leq \tau\leq\mathcal{T}}\|\varphi(\tau)\|^{p}\int_{0}^{\mathcal{T}}\|\varphi(\tau)\|^{p}d\tau\Big]^{\frac{1}{2}}
\\ \label{I3}
 &\leq C_p\mathbb{E}\sup_{0\leq \tau\leq\mathcal{T}}\|\varphi(\tau)\|^{p}+C_p\mathbb{E}\int_{0}^{\mathcal{T}}\sup_{0\leq s\leq \tau}\|\varphi(s)\|^{p}d\tau,
\end{align}
where we used Young's inequality in the last inequality.
The Taylor's expansion and \eqref{ff}-\eqref{fff} imply that

\begin{align}\nonumber
\mathbb{E}\sup_{0\leq T\leq\mathcal{T}}\|I_{4}(T)\|
&\leq C\sum_{k=1}^{\infty}\mathbb{E}\int_{0}^{\mathcal{T}}\int_{|y|<1}\Big|f(\varphi(\tau)+G'_{c}(0)\cdot \varphi(\tau)ye_{k})-f(\varphi(\tau))-\frac{\langle G'_{c}(0)\cdot \varphi(\tau)ye_{k},p\varphi(\tau)\rangle}{\big(\|\varphi(\tau)\|^{2}+1\big)^{1-\frac{p}{2}}}\Big|\nu_{\alpha}(dy)d\tau\\ \nonumber
&\leq C_p\sum_{k=1}^{\infty}\mathbb{E}\int_{0}^{\mathcal{T}}\int_{|y|<1}\frac{\|G'_{c}(0)\cdot \varphi(\tau)ye_{k}\|^{2}}{\big(\|\varphi(\tau)\|^{2}+1\big)^{1-\frac{p}{2}}}\nu_{\alpha}(dy)d\tau\\ \nonumber
&\leq C_p\sum_{k=1}^{\infty}\beta_{k}^{2}\int_{|y|<1}|y|^{2}\nu_{\alpha}(dy)\mathbb{E}\int_{0}^{\mathcal{T}}\|\varphi(\tau)\|^{p}d\tau\\ \label{I4}
&\leq C_p\mathbb{E}\int_{0}^{\mathcal{T}}\|\varphi(\tau)\|^{p}d\tau.
\end{align}

Combining estimates \eqref{f4}-\eqref{I4} yields
\begin{equation*}
\mathbb{E}\sup_{0\leq t\leq\mathcal{T}}\|\varphi(t)\|^{p}\leq C_p\big(\|a(0)\|^{p}+1\big)+C_p\mathbb{E}\int_{0}^{\mathcal{T}}\sup_{0\leq s\leq t}\|\varphi(s)\|^{p}dt.
\end{equation*}
By an application of Gronwall's lemma, we thus derive
\begin{equation*}
\mathbb{E}\sup_{0\leq t\leq\mathcal{T}}\|\varphi(t)\|^{p}
\leq C_p\big(\|a(0)\|^{p}+1\big)e^{C_p\mathcal{T}}
\leq C_p\big(\|a(0)\|^{p}+1\big)e^{C_pT_0}.
\end{equation*}
As the equation above holds for any radius $n\in\mathcal{N}$ in the definition of the stopping time $\mathcal{T}$ we can pass to the monotone limit to obtain
\begin{equation*}
\mathbb{E}\sup_{0\leq T\leq T_{0}}\|\varphi(T)\|^{p}\leq C_p\big(\|a(0)\|^{p}+1\big).
\end{equation*}
Finally, recall that the norm in $\mathcal{H}^\theta$
and the norm in $\mathcal{H}$ are equivalent on $\mathcal{N}$.
\end{proof}

The next step now is to remove the error from the equation for $a$ to obtain the amplitude equation. We show an error estimate between $a$ and the solution $\varphi$ of the amplitude equation.

\begin{lemma}\label{ab}
Thanks to the use of $(\textbf{A1})$-$(\textbf{A5})$ and $\|b(0)\|_{\theta}\leq\varepsilon^{-\kappa}$, for any $p\in(1,\alpha)$, there exists a constant $C_p>0$ such that
\begin{equation*}
\mathbb{E}\sup_{0\leq T\leq \tau^{*}}\|a(T)-\varphi(T)\|_{\theta}^{p}\leq C_p\varepsilon^{p-18\kappa p}.
\end{equation*}
\end{lemma}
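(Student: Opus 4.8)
The plan is to run an It\^o energy estimate on the difference $e:=a-\varphi$ and to show that it is driven only by the residual $R$ from \eqref{R}, whose size is already controlled by \eqref{Re}. Subtracting \eqref{iv} from \eqref{ra} gives
\[
e(T)=\int_{0}^{T}\big[\mathcal{L}_{c}e(\tau)+\mathcal{F}_{c}(a(\tau))-\mathcal{F}_{c}(\varphi(\tau))\big]d\tau+\int_{0}^{T}G_{c}'(0)\cdot e(\tau)\,d\tilde{L}^{\alpha}(\tau)+R(T).
\]
Since $R$ is not a martingale, I would work with $w:=e-R$, which satisfies the same equation with the forcing $R$ removed, and apply It\^o's formula (\cite[Theorem 4.4.7]{App}) to $f(w)=(\|w\|^{2}+1)^{p/2}$, following verbatim the structure of the proof of \eqref{fine}: this produces a drift term together with jump terms of the same three types treated there. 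Throughout I use that all $\mathcal{H}^{\theta}$-norms are equivalent on the finite-dimensional space $\mathcal{N}$.

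The decisive step is the drift. Its dangerous part is $\langle\mathcal{F}_{c}(a)-\mathcal{F}_{c}(\varphi),w\rangle$, which I split, via $w=e-R$, into $\langle\mathcal{F}_{c}(a)-\mathcal{F}_{c}(\varphi),e\rangle$ and $-\langle\mathcal{F}_{c}(a)-\mathcal{F}_{c}(\varphi),R\rangle$. For the first piece I substitute $a=\varphi+e$, expand by trilinearity, and invoke the structural sign conditions \eqref{fc}, \eqref{fu} and \eqref{fvw}, which are designed precisely so that this quantity is dissipative, i.e.\ bounded by $-C_{0}\|e\|^{4}$ up to terms that are non-positive or absorbable into this dissipation. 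This is the crux: the naive Lipschitz bound $\|\mathcal{F}_{c}(a)-\mathcal{F}_{c}(\varphi)\|_{\theta}\leq C(\|a\|_{\theta}^{2}+\|\varphi\|_{\theta}^{2})\|e\|_{\theta}$ would, on $\{T\leq\tau^{*}\}$ where $\|a\|_{\theta}\leq\varepsilon^{-\kappa}$, only yield a Gronwall constant of order $e^{C\varepsilon^{-2\kappa}}$, which is useless, whereas the sign furnished by \eqref{fc}--\eqref{fvw} keeps the effective Gronwall coefficient harmless. For the second piece I use \eqref{F}, the cut-off $\|a\|_{\theta}\leq\varepsilon^{-\kappa}$ from \eqref{tao}, the moment bound \eqref{fine} for $\varphi$, and Young's inequality to absorb $\|e\|$-powers into the dissipation; because each such term carries a factor $\|R\|_{\theta}$, the resulting Gronwall coefficient is in fact a positive power of $\varepsilon$.

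The stochastic and compensator terms are handled exactly as in the proof of \eqref{fine}: assumption (A5) gives $\|G_{c}'(0)\cdot e\|_{L_{HS}}\leq C\|e\|$, so that, using the finiteness of $\int_{|y|\geq1}|y|^{p}\nu_{\alpha}(dy)$ and $\int_{|y|<1}|y|^{2}\nu_{\alpha}(dy)$, the Burkholder--Davis--Gundy inequality for the compensated part, Taylor's formula for $f$ and Young's inequality, one bounds $\mathbb{E}\sup_{0\leq T\leq\tau^{*}}(I_{2}+I_{3}+I_{4})$ by $\tfrac{1}{2}\mathbb{E}\sup_{0\leq T\leq\tau^{*}}\|w\|^{p}$ (absorbed on the left) plus $C\int_{0}^{T_{0}}\mathbb{E}\sup_{0\leq s\leq\tau\wedge\tau^{*}}\|e(s)\|^{p}\,d\tau$. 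Collecting the estimates and passing through the auxiliary stopping time that keeps $w$ bounded (as in \eqref{fine}), I obtain
\[
\mathbb{E}\sup_{0\leq T\leq\tau^{*}}\|w(T)\|_{\theta}^{p}\leq C_{p}\int_{0}^{T_{0}}\mathbb{E}\sup_{0\leq s\leq\tau\wedge\tau^{*}}\|e(s)\|_{\theta}^{p}\,d\tau .
\]
Since $e=w+R$ gives $\mathbb{E}\sup\|e\|_{\theta}^{p}\leq C_{p}\mathbb{E}\sup\|w\|_{\theta}^{p}+C_{p}\mathbb{E}\sup\|R\|_{\theta}^{p}$, the pure forcing $\mathbb{E}\sup\|R\|_{\theta}^{p}\leq C_{p}\varepsilon^{p-18\kappa p}$ from \eqref{Re} enters, and Gronwall's lemma yields $\mathbb{E}\sup_{0\leq T\leq\tau^{*}}\|e\|_{\theta}^{p}\leq C_{p}\varepsilon^{p-18\kappa p}$, as claimed.

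The main obstacle is the drift estimate: one must combine \eqref{fc}, \eqref{fu} and \eqref{fvw} so that the cubic difference paired with $e$ is genuinely dissipative rather than merely Lipschitz-bounded, since only this prevents a Gronwall constant blowing up like $e^{C\varepsilon^{-2\kappa}}$. A secondary difficulty is that the low moments $p<\alpha$ of the $\alpha$-stable noise rule out a single second-moment estimate and force the systematic splitting into large and small jumps together with the finite-$p$ moment inequality \eqref{Lmoment}, exactly as in the preceding lemmas.
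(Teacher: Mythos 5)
Your plan follows the paper's proof in every structural respect: the correction $w:=e-R$ (which is, up to sign, the paper's error variable $\varphi-a+R$) to avoid It\^o-differentiating $R$, an It\^o estimate on a regularized $p$-th power of the corrected error, the use of the structural condition \eqref{fvw} so that the cubic difference paired with the error is dissipative rather than merely Lipschitz (your remark that a naive Lipschitz bound would produce a useless Gronwall constant of order $e^{C\varepsilon^{-2\kappa}}$ is exactly the right diagnosis), the three jump-term estimates modeled on the proof of \eqref{fine}, and the closing Gronwall step combined with \eqref{Re}.

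There is, however, one concrete gap that breaks the argument as written: the regularization constant in your weight function. You take $f(w)=(\|w\|^{2}+1)^{p/2}$ ``verbatim'' from the proof of \eqref{fine}, whereas the paper takes $f(w)=(\|w\|^{2}+\delta_\varepsilon)^{p/2}$ with $\delta_\varepsilon=\varepsilon^{2}$, and this difference is decisive. It\^o's formula starts from $f(w(0))=f(0)=1$, and $f\geq1$ pointwise; since for $p<2$ the only usable comparison is $\|w\|^{p}\leq f(w)$ (one cannot have $\|w\|^{p}\leq C\,(f(w)-1)$, because $f(w)-1\sim\tfrac{p}{2}\|w\|^{2}\ll\|w\|^{p}$ as $w\to0$), every bound your scheme can produce for $\mathbb{E}\sup\|w\|^{p}$ is at least $1$. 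Your displayed inequality $\mathbb{E}\sup_{0\leq T\leq\tau^{*}}\|w(T)\|_{\theta}^{p}\leq C_{p}\int_{0}^{T_{0}}\mathbb{E}\sup_{0\leq s\leq\tau\wedge\tau^{*}}\|e(s)\|_{\theta}^{p}\,d\tau$ has silently discarded this additive $1$ coming from $f(w(0))$; restoring it, Gronwall yields only $\mathbb{E}\sup\|e\|^{p}\leq C_{p}$, not $C_{p}\varepsilon^{p-18\kappa p}$, so the claimed rate is never reached. This is precisely why the paper chooses $\delta_\varepsilon=\varepsilon^{2}$: then $f(w(0))=\varepsilon^{p}\leq\varepsilon^{p-18\kappa p}$ is of the correct order, and the price --- lower-bounding the denominators by $\delta_\varepsilon^{1-\frac{p}{2}}$, which injects factors $\varepsilon^{p-2}$ into the drift and compensator terms as in \eqref{L1} and \eqref{L4} --- is paid for by the accompanying powers of $\|R\|$ controlled through \eqref{Re}. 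With that single substitution (and a recheck of your Young-inequality absorptions against the new $\varepsilon^{p-2}$ factors), your argument goes through and coincides with the paper's.
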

\begin{proof}
For the proof we derive an equation for the error $a-\varphi$ and proceed similarly that for the bound on $\varphi$. But as $R$
(defined in \eqref{R}) is not differentiable in the It\^o-sense, we first substitute $g:=a-R$. Clearly, we have
\begin{equation*}
g(T)=a(0)+\int_{0}^{T}\mathcal{L}_{c}(g(\tau)+R(\tau))d\tau+\int_{0}^{T}\mathcal{F}_{c}(g(\tau)+R(\tau))d\tau+\int_{0}^{T}G'_{c}(0)\cdot (g(\tau)+R(\tau))d\tilde{L}^{\alpha}(\tau).
\end{equation*}
Defining the error $e:=\varphi-g=\varphi-a+R$, we get
\begin{equation*}
e(T)=\int_{0}^{T}\mathcal{L}_{c}e(\tau)d\tau-\int_{0}^{T}\mathcal{L}_{c}R(\tau)d\tau
+\int_{0}^{T}\mathcal{F}_{c}(\varphi(\tau))d\tau-\int_{0}^{T}\mathcal{F}_{c}(\varphi(\tau)-e(\tau)+R(\tau))d\tau
+\int_{0}^{T}G'_{c}(0)\cdot (e(\tau)-R(\tau))d\tilde{L}^{\alpha}(\tau).
\end{equation*}
Let $f$ be the smooth function on $\mathcal{H}$ given by
\begin{equation*}
f(\cdot)=\big(\|\cdot\|^{2}+\delta_\varepsilon\big)^{\frac{p}{2}},\quad \text{where}~~ \delta_\varepsilon=\varepsilon^{2}.
\end{equation*}
For any $x, h\in\mathcal{H}$,
\begin{equation*}
f'(x)h=\frac{p}{\big(\|x\|^{2}+\delta_\varepsilon\big)^{1-\frac{p}{2}}}\langle x,h\rangle
\end{equation*}
and
\begin{equation*}
f''(x)(h,h)=\frac{p}{\big(\|x\|^{2}+\delta_\varepsilon\big)^{1-\frac{p}{2}}}\langle h,h\rangle+\frac{p(p-2)}{\big(\|x\|^{2}+\delta_\varepsilon\big)^{2-\frac{p}{2}}}\langle x,h\rangle\langle x,h\rangle\leq\frac{p(p-1)}{\big(\|x\|^{2}+\delta_\varepsilon\big)^{1-\frac{p}{2}}}\|h\|^{2}.
\end{equation*}
 Applying It\^o's formula to compute
\begin{align}\nonumber
&f(e(T))\\ \nonumber
&=\delta_\varepsilon^{\frac{p}{2}}+\int_{0}^{T}\frac{\langle\mathcal{L}_{c}e(\tau),pe(\tau)\rangle}{\big(\|e(\tau)\|^{2}+\delta_\varepsilon\big)^{1-\frac{p}{2}}}d\tau-\int_{0}^{T}\frac{\langle\mathcal{L}_{c}R(\tau),pe(\tau)\rangle}{\big(\|e(\tau)\|^{2}+\delta_\varepsilon\big)^{1-\frac{p}{2}}}d\tau\\ \nonumber
&~~~+\int_{0}^{T}\frac{\langle\mathcal{F}_{c}(\varphi(\tau))-\mathcal{F}_{c}(\varphi(\tau)-e(\tau)+R(\tau)),pe(\tau)\rangle}{\big(\|e(\tau)\|^{2}+\delta_\varepsilon\big)^{1-\frac{p}{2}}}d\tau\\ \nonumber
&~~~+\sum_{k=1}^{\infty}\int_{0}^{T}\int_{|y|\geq1}[f(e(\tau-)+G'_{c}(0)\cdot (e(\tau)-R(\tau))ye_{k})-f(e(\tau-))]N_{k}(d\tau,dy)\\ \nonumber
&~~~+\sum_{k=1}^{\infty}\int_{0}^{T}\int_{|y|<1}[f(e(\tau-)+G'_{c}(0)\cdot (e(\tau)-R(\tau))ye_{k})-f(e(\tau-))]\tilde{N}_{k}(d\tau,dy)\\ \nonumber
&~~~+\sum_{k=1}^{\infty}\int_{0}^{T}\int_{|y|<1}\Big[f(e(\tau-)+G'_{c}(0)\cdot (e(\tau)-R(\tau))ye_{k})-f(e(\tau-))-\frac{\langle G'_{c}(0)\cdot (e(\tau)-R(\tau))ye_{k},pe(\tau-)\rangle}{\big(\|e(\tau-)\|^{2}+\delta_\varepsilon\big)^{1-\frac{p}{2}}}\Big]\nu_{\alpha}(dy)d\tau.
\end{align}
We derive
\begin{equation}\label{L0}
\int_{0}^{T}\frac{\langle\mathcal{L}_{c}e(\tau),pe(\tau)\rangle}{\big(\|e(\tau)\|^{2}+\delta_\varepsilon\big)^{1-\frac{p}{2}}}d\tau\leq C_{p}\int_{0}^{T}\|e(\tau)\|^{p}d\tau,
\end{equation}
and
\begin{align}\nonumber
	-\int_{0}^{T}\frac{\langle\mathcal{L}_{c}R(\tau),pe(\tau)\rangle}{\big(\|e(\tau)\|^{2}+\delta_\varepsilon\big)^{1-\frac{p}{2}}}d\tau
	&\leq C_{p}\int_{0}^{T}\frac{\|R(\tau)\|\|e(\tau)\|}{\big(\|e(\tau)\|^{2}+\delta_\varepsilon\big)^{1-\frac{p}{2}}}d\tau\leq C_{p}\int_{0}^{T}\|e(\tau)\|^{p-1}\|R(\tau)\|d\tau\\ \label{L}
	&\leq C_{p}\int_{0}^{T}\|e(\tau)\|^{p}d\tau+C_{p}\int_{0}^{T}\|R(\tau)\|^{p}d\tau.
\end{align}
By condition \eqref{fvw}, Young's inequality makes sure that
\begin{align}\nonumber
\int_{0}^{T}\frac{\langle\mathcal{F}_{c}(\varphi(\tau))-\mathcal{F}_{c}(\varphi(\tau)-e(\tau)+R(\tau)),pe(\tau)\rangle}{\big(\|e(\tau)\|^{2}+\delta_\varepsilon\big)^{1-\frac{p}{2}}}d\tau&\leq C_{p}\int_{0}^{T}\frac{\|R(\tau)\|^{4}}{\big(\|e(\tau)\|^{2}+\delta_\varepsilon\big)^{1-\frac{p}{2}}}d\tau+C_{p}\int_{0}^{T}\frac{\|\varphi(\tau)\|^{2}\|R(\tau)\|^{2}}{\big(\|e(\tau)\|^{2}+\delta_\varepsilon\big)^{1-\frac{p}{2}}}d\tau\\\label{L1}
&\leq C_{p}\int_{0}^{T}\|R(\tau)\|^{4}\delta_\varepsilon^{\frac{p}{2}-1}d\tau+C_{p}\int_{0}^{T}\|\varphi(\tau)\|^{2}\|R(\tau)\|^{2}\delta_\varepsilon^{\frac{p}{2}-1}d\tau.
\end{align}

The stochastic term  is bounded as follows. We examine

\begin{align}\nonumber
&\mathbb{E}\sup_{0\leq t\leq\tau^{*}\wedge{T}}\Big|\sum_{k=1}^{\infty}\int_{0}^{t}\int_{|y|\geq1}[f(e(\tau-)+G'_{c}(0)\cdot (e(\tau)-R(\tau))ye_{k})-f(e(\tau-))]N_{k}(d\tau,dy)\Big|\\ \nonumber
&\leq C\sum_{k=1}^{\infty}\mathbb{E}\Big(\int_{0}^{\tau^{*}\wedge{T}}\int_{|y|\geq1}|f(e(\tau)+G'_{c}(0)\cdot (e(\tau)-R(\tau))ye_{k})-f(e(\tau))|N_{k}(d\tau,dy)\Big)\\ \nonumber
&=C\sum_{k=1}^{\infty}\mathbb{E}\Big(\int_{0}^{\tau^{*}\wedge{T}}\int_{|y|\geq1}|f(e(\tau)+G'_{c}(0)\cdot (e(\tau)-R(\tau))ye_{k})-f(e(\tau))|\nu_{\alpha}(dy)d\tau\Big)\\ \nonumber
&\leq C\sum_{k=1}^{\infty}\mathbb{E}\Big(\int_{0}^{\tau^{*}\wedge{T}}\int_{|y|\geq1}\int_{0}^{1}\|f'(e(\tau)+\xi G'_{c}(0)\cdot(e(\tau)-R(\tau))(\tau)ye_{k})\|d\xi\|G'_{c}(0)\cdot (e(\tau)-R(\tau))ye_{k}\|\nu_{\alpha}(dy)d\tau\Big)\\ \nonumber
&\leq C_p\sum_{k=1}^{\infty}\mathbb{E}\Big(\int_{0}^{\tau^{*}\wedge{T}}\int_{|y|\geq1}(\|e(\tau)\|^{p-1}+\| G'_{c}(0)\cdot(e(\tau)-R(\tau))ye_{k}\|^{p-1})\|G'_{c}(0)\cdot (e(\tau)-R(\tau))ye_{k}\|\nu_{\alpha}(dy)d\tau\Big)\\ \nonumber
&\leq C_p\Big[\sum_{k=1}^{\infty}\beta_{k}\int_{|y|\geq1}|y|\nu_{\alpha}(dy)\mathbb{E}\int_{0}^{\tau^{*}\wedge{T}}\|e(\tau)\|^{p-1}\|e(\tau)-R(\tau)\|d\tau+\sum_{k=1}^{\infty}\beta_{k}^{p}\int_{|y|\geq1}|y|^{p}\nu_{\alpha}(dy)\mathbb{E}\int_{0}^{\tau^{*}\wedge{T}}\|e(\tau)-R(\tau)\|^{p}d\tau\Big]\\ \nonumber
&\leq C_p\mathbb{E}\int_{0}^{\tau^{*}\wedge{T}}[\|e(\tau)\|^{p}+\|e(\tau)\|^{p-1}\|R(\tau)\|]d\tau+C_p\mathbb{E}\int_{0}^{\tau^{*}\wedge{T}}[\|e(\tau)\|^{p}+\|R(\tau)\|^{p}]d\tau\\ \label{L2}
&\leq C_p\mathbb{E}\int_{0}^{\tau^{*}\wedge{T}}\|e(\tau)\|^{p}d\tau+C_p\mathbb{E}\int_{0}^{\tau^{*}\wedge{T}}\|R(\tau)\|^{p}d\tau,
\end{align}
where we used Young's inequality in the last inequality.
By Burkholder-Davis-Gundy inequality,
\allowdisplaybreaks
\begin{eqnarray}\nonumber
\lefteqn{\mathbb{E}\sup_{0\leq t\leq\tau^{*}\wedge{T}}\Big|\sum_{k=1}^{\infty}\int_{0}^{t}\int_{|y|<1}[f(e(\tau-)+G'_{c}(0)\cdot (e(\tau)-R(\tau))ye_{k})-f(e(\tau-))]\tilde{N}_{k}(d\tau,dy)\Big|}
\\ \nonumber
&\leq& C\sum_{k=1}^{\infty}\mathbb{E}\Big(\int_{0}^{\tau^{*}\wedge{T}}\int_{|y|<1}|f(e(\tau)+G'_{c}(0)\cdot (e(\tau)-R(\tau))ye_{k})-f(e(\tau))|^{2}
\nu_{\alpha}(dy)d\tau\Big)^{\frac{1}{2}}  \\ \nonumber
&\leq& C\sum_{k=1}^{\infty}\mathbb{E}\Big(\int_{0}^{\tau^{*}\wedge{T}}\int_{|y|<1}\int_{0}^{1}\|f'(e(\tau)+\xi G'_{c}(0)\cdot (e(\tau)-R(\tau))ye_{k})\|^{2}d\xi\|G'_{c}(0)\cdot(e(\tau)-R(\tau))ye_{k}\|^{2}\nu_{\alpha}(dy)d\tau\Big)^{\frac{1}{2}}\\ \nonumber
&\leq& C_p\sum_{k=1}^{\infty}\mathbb{E}\Big(\int_{0}^{\tau^{*}\wedge{T}}\int_{|y|<1}(\|e(\tau)\|^{2p-2}+\| G'_{c}(0)\cdot (e(\tau)-R(\tau))ye_{k}\|^{2p-2})\|G'_{c}(0)\cdot (e(\tau)-R(\tau))ye_{k}\|^{2}\nu_{\alpha}(dy)d\tau\Big)^{\frac{1}{2}}\\ \nonumber
&\leq& C_p\Big(\sum_{k=1}^{\infty}\beta_{k}\Big[\int_{|y|<1}|y|^{2}\nu_{\alpha}(dy)\Big]^{\frac{1}{2}}\mathbb{E}\Big[\int_{0}^{\tau^{*}\wedge{T}}\|e(\tau)\|^{2p-2}\|e(\tau)-R(\tau)\|^{2}d\tau\Big]^{\frac{1}{2}} \\ \nonumber
&& +\sum_{k=1}^{\infty}\beta_{k}^{p}\Big[\int_{|y|<1}|y|^{2p}\nu_{\alpha}(dy)\Big]^{\frac{1}{2}}\mathbb{E}\Big[\int_{0}^{\tau^{*}\wedge{T}}\|e(\tau)-R(\tau)\|^{2p}d\tau\Big]^{\frac{1}{2}}\Big)\\ \nonumber
&\leq& C_p\mathbb{E}\Big[\int_{0}^{\tau^{*}\wedge{T}}[\|e(\tau)\|^{2p}+\|e(\tau)\|^{2p-2}\|R(\tau)\|^{2}]d\tau\Big]^{\frac{1}{2}}+C_p\mathbb{E}\Big[\int_{0}^{\tau^{*}\wedge{T}}[\|e(\tau)\|^{2p}+\|R(\tau)\|^{2p}]d\tau\Big]^{\frac{1}{2}}\\ \nonumber
&\leq& C_p\mathbb{E}\Big[\int_{0}^{\tau^{*}\wedge{T}}\|e(\tau)\|^{2p}d\tau\Big]^{\frac{1}{2}}+C_p\mathbb{E}\Big[\int_{0}^{\tau^{*}\wedge{T}}\|R(\tau)\|^{2p}d\tau\Big]^{\frac{1}{2}}\\ \label{L3}
&\leq& C_p\mathbb{E}\sup_{0\leq T\leq\tau^{*}\wedge{T}}\|e(T)\|^{p}+C_p\int_{0}^{{T}}\mathbb{E}\sup_{0\leq r\leq\tau^{*}\wedge\tau}\|e(r)\|^{p}d\tau+C_p\mathbb{E}\Big[\int_{0}^{\tau^{*}\wedge{T}}\|R(\tau)\|^{2p}d\tau\Big]^{\frac{1}{2}}.
\end{eqnarray}
The Taylor's expansion has been successfully used to monitor
\begin{align}\nonumber
&\mathbb{E}\sup_{0\leq t\leq\tau^{*}\wedge{T}}\Big|\sum_{k=1}^{\infty}\int_{0}^{t}\int_{|y|<1}\Big[f(e(\tau-)+G'_{c}(0)\cdot (e(\tau)-R(\tau))ye_{k})-f(e(\tau-))-\frac{\langle G'_{c}(0)\cdot (e(\tau)-R(\tau))ye_{k},pe(\tau-)\rangle}{\big(\|e(\tau)\|^{2}+\delta_\varepsilon\big)^{1-\frac{p}{2}}}\Big]\nu_{\alpha}(dy)d\tau\Big|\\ \nonumber
&\leq C\sum_{k=1}^{\infty}\mathbb{E}\int_{0}^{\tau^{*}\wedge{T}}\int_{|y|<1}\Big|f(e(\tau)+G'_{c}(0)\cdot (e(\tau)-R(\tau))ye_{k})-f(e(\tau))-\frac{\langle G'_{c}(0)\cdot (e(\tau)-R(\tau))ye_{k},pe(\tau)\rangle}{\big(\|e(\tau)\|^{2}+\delta_\varepsilon\big)^{1-\frac{p}{2}}}\Big|\nu_{\alpha}(dy)d\tau\\ \nonumber
&\leq C_p\sum_{k=1}^{\infty}\mathbb{E}\int_{0}^{\tau^{*}\wedge{T}}\int_{|y|<1}\frac{\|G'_{c}(0)\cdot (e(\tau)-R(\tau))ye_{k}\|^{2}}{\big(\|e(\tau)\|^{2}+\delta_\varepsilon\big)^{1-\frac{p}{2}}}\nu_{\alpha}(dy)d\tau\\ \nonumber
&\leq C_p\sum_{k=1}^{\infty}\beta_{k}^{2}\int_{|y|<1}|y|^{2}\nu_{\alpha}(dy)\mathbb{E}\int_{0}^{\tau^{*}\wedge{T}}\frac{\|e(\tau)-R(\tau)\|^{2}}{\big(\|e(\tau)\|^{2}+\delta_\varepsilon\big)^{1-\frac{p}{2}}}d\tau\\ \nonumber
&\leq C_p\mathbb{E}\int_{0}^{\tau^{*}\wedge{T}}\frac{\|e(\tau)\|^{2}}{\big(\|e(\tau)\|^{2}+\delta_\varepsilon\big)^{1-\frac{p}{2}}}d\tau+C_p\mathbb{E}\int_{0}^{\tau^{*}\wedge{T}}\frac{\|R(\tau)\|^{2}}{\big(\|e(\tau)\|^{2}+\delta_\varepsilon\big)^{1-\frac{p}{2}}}d\tau\\ \label{L4}
&\leq C_p\mathbb{E}\int_{0}^{\tau^{*}\wedge{T}}\|e(\tau)\|^{p}d\tau+C_p\mathbb{E}\int_{0}^{\tau^{*}\wedge{T}}\|R(\tau)\|^{2}\delta_\varepsilon^{\frac{p}{2}-1}d\tau.
\end{align}
Therefore, it should be evident from collecting together \eqref{Re}, \eqref{fine} and \eqref{L0}-\eqref{L4} that
\begin{equation*}
\mathbb{E}\sup_{0\leq t\leq\tau^{*}\wedge{T}}\|e(t)\|^{p}\leq C_p\varepsilon^{p-18\kappa p}+C_p\int_{0}^{{T}}\mathbb{E}\sup_{0\leq r\leq\tau^{*}\wedge\tau}\|e(r)\|^{p}d\tau.
\end{equation*}
The use of Gronwall's lemma allows us to identify
\begin{equation*}
\mathbb{E}\sup_{0\leq T\leq\tau^{*}}\|e(T)\|_{\theta}^{p}\leq C_p\varepsilon^{p-18\kappa p},
\end{equation*}
where we used that the norm in $\mathcal{H}^\theta$
and the norm in $\mathcal{H}$ are equivalent on $\mathcal{N}$.
Moreover, because of \eqref{Re},
\begin{equation*}
\mathbb{E}\sup_{0\leq T\leq\tau^{*}}\|a(T)-\varphi(T)\|_{\theta}^{p}\leq\mathbb{E}\sup_{0\leq T\leq\tau^{*}}\|e(T)\|_{\theta}^{p}+\mathbb{E}\sup_{0\leq T\leq\tau^{*}}\|R(T)\|_{\theta}^{p}\leq C_p\varepsilon^{p-18\kappa p}.
\end{equation*}
\end{proof}
\begin{remark}
It is worthwhile to note that due to Lemma \ref{ab} and \eqref{fine}, for any $p\in(0,\alpha)$ and $\kappa\in(0,\frac{1}{18})$,
\begin{equation}\label{at}
\mathbb{E}\sup_{0\leq T\leq\tau^{*}}\|a(T)\|_{\theta}^{p}\leq\mathbb{E}\sup_{0\leq T\leq\tau^{*}}\|a(T)-\varphi(T)\|_{\theta}^{p}+\mathbb{E}\sup_{0\leq T\leq\tau^{*}}\|\varphi(T)\|_{\theta}^{p}\leq C_p(\|a(0)\|_{\theta}^{p}+1).
\end{equation}
\end{remark}

A remarkable find of Lemma \ref{ab} is the following bound on $\mathcal{R}_c(T)=\varepsilon[a(T)-\varphi(T)]$:
\begin{corollary}
For any $p\in(1,\alpha)$, there exists a constant $C_p>0$ such that
\begin{equation}\label{RR}
\mathbb{E}\sup_{0\leq T\leq\tau^{*}}\|\mathcal{R}_c(T)\|_{\theta}^{p}\leq C_{p}\varepsilon^{2p-18\kappa p}.
\end{equation}
\end{corollary}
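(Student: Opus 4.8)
The plan is to observe that the corollary is an immediate consequence of Lemma \ref{ab} together with the definition of $\mathcal{R}_c$, so that no new estimate is required. First I would unpack the definition. From the decomposition \eqref{huaR} we have $\mathcal{R}(T)=\varepsilon[a(T)-\varphi(T)+b(T)]$ with $a(T),\varphi(T)\in\mathcal{N}$ and $b(T)\in\mathcal{S}$. Applying the orthogonal projector $\mathcal{P}_c$ onto $\mathcal{N}$ and using $\mathcal{P}_c b(T)=0$ yields $\mathcal{R}_c(T)=\mathcal{P}_c\mathcal{R}(T)=\varepsilon[a(T)-\varphi(T)]$, which is exactly the expression recorded just before the statement.

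Next, since $\varepsilon$ is a deterministic constant and the norm $\|\cdot\|_{\theta}$ is positively homogeneous of degree one, I would pull the scalar factor out of the supremum and the expectation, giving
\[
\mathbb{E}\sup_{0\leq T\leq\tau^{*}}\|\mathcal{R}_c(T)\|_{\theta}^{p}
=\varepsilon^{p}\,\mathbb{E}\sup_{0\leq T\leq\tau^{*}}\|a(T)-\varphi(T)\|_{\theta}^{p}.
\]
Finally, I would invoke the bound of Lemma \ref{ab}, valid for $p\in(1,\alpha)$ under $(\textbf{A1})$--$(\textbf{A5})$ and $\|b(0)\|_{\theta}\leq\varepsilon^{-\kappa}$, namely $\mathbb{E}\sup_{0\leq T\leq\tau^{*}}\|a(T)-\varphi(T)\|_{\theta}^{p}\leq C_p\varepsilon^{p-18\kappa p}$, to conclude
\[
\mathbb{E}\sup_{0\leq T\leq\tau^{*}}\|\mathcal{R}_c(T)\|_{\theta}^{p}
\leq \varepsilon^{p}\cdot C_p\varepsilon^{p-18\kappa p}=C_p\varepsilon^{2p-18\kappa p}.
\]

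There is essentially no analytic obstacle here: all the difficulty—controlling the residual $R$ through Lemmas \ref{ek}--\ref{EG}, the uniform bound \eqref{fine} on $\varphi$, and the It\^o/Gronwall argument for the error $e=\varphi-a+R$—was already discharged in proving Lemma \ref{ab}. The only things worth double-checking are the exponent bookkeeping, i.e. that multiplying $\varepsilon^{p-18\kappa p}$ by the prefactor $\varepsilon^{p}$ indeed yields the claimed $\varepsilon^{2p-18\kappa p}$, and that the restriction $p\in(1,\alpha)$ (rather than the wider $p\in(0,\alpha)$ appearing in the earlier lemmas) is correctly inherited from Lemma \ref{ab}, since that is the range in which the It\^o-formula argument with $f=(\|\cdot\|^{2}+\delta_\varepsilon)^{p/2}$ produces the convexity bound \eqref{fff}.
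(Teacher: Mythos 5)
Your proof is correct and follows essentially the same route as the paper, which presents the corollary as an immediate consequence of Lemma \ref{ab} via the identity $\mathcal{R}_c(T)=\varepsilon[a(T)-\varphi(T)]$ — precisely your projection step ($\mathcal{P}_c b=0$) followed by pulling out the factor $\varepsilon^{p}$. The exponent bookkeeping $\varepsilon^{p}\cdot\varepsilon^{p-18\kappa p}=\varepsilon^{2p-18\kappa p}$ and the inheritance of the range $p\in(1,\alpha)$ from Lemma \ref{ab} are both handled correctly.
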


As Lemma \ref{IJ} indicates that $I$ and $J$ are uniformly small, but up to now we only verified an $L^p$-bound on $K$. But in order to show that the stopping time $\tau^*$ is large, it is important to  bound
\begin{equation*}
K(T)=\frac{1}{\varepsilon}\int_{0}^{T}e^{\varepsilon^{-2}(T-\tau)\mathcal{A}_{s}}G_{s}(\varepsilon a(\tau)+\varepsilon b(\tau))d\tilde{L}^{\alpha}(\tau)
\end{equation*}
uniformly in time.
The moment inequality \eqref{Lmoment} is unfortunately not available here, as the integrand in $K(T)$ depends on $T$. We will use again the Riesz-Nagy-trick.

\begin{lemma}\label{M}
Assume the setting of Lemma \ref{ab}. For any $p\in(0,\alpha)$, there exists a constant $C_p>0$ such that
\begin{equation}\label{MT}
\mathbb{E}\sup_{0\leq T\leq\tau^{*}}\|K(T)\|_{\theta}^{p}\leq C_{p}\varepsilon^{-\kappa p}.
\end{equation}
\end{lemma}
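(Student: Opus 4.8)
The plan is to follow the same Riesz--Nagy route as in the proof of Lemma \ref{K}, but now to extract the supremum in time rather than a time integral. The obstruction to applying the moment inequality \eqref{Lmoment} directly is that the convolution kernel $e^{\varepsilon^{-2}(T-\tau)\mathcal{A}_s}$ depends on the running upper limit $T$. The key observation is that, in the $\|\cdot\|_\theta$-norm, $\{e^{\varepsilon^{-2}s\mathcal{A}_s}\}_{s\geq0}$ is a contraction semigroup on $\mathcal{P}_s\mathcal{H}^\theta$ for every $\varepsilon>0$, since its eigenvalues $e^{-\varepsilon^{-2}s\lambda_k}$, $k\geq n+1$, are bounded by $1$. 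Hence the dilation theorem of Riesz--Nagy applies with constants uniform in $\varepsilon$, and the maximal inequality for stochastic convolutions from \cite{BH09} yields, for any predictable $\Phi$,
\[
\mathbb{E}\sup_{0\leq T\leq T_0}\Big\|\int_0^T e^{\varepsilon^{-2}(T-\tau)\mathcal{A}_s}\Phi(\tau)\,d\tilde L^\alpha(\tau)\Big\|_\theta^p \leq C\,\mathbb{E}\Big(\int_0^{T_0}\|\Phi(\tau)\|_{L_{HS}}^\alpha\,d\tau\Big)^{p/\alpha}.
\]

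First I would remove the stopping time exactly as in Lemma \ref{K}: inserting the indicator $\mathds{1}_{[0,\tau^*]}$ makes the integrand predictable and, since $K(T)=\tilde K(T)$ for $T\leq\tau^*$ where $\tilde K$ carries the indicator, one has $\sup_{0\leq T\leq\tau^*}\|K(T)\|_\theta \leq \sup_{0\leq T\leq T_0}\|\tilde K(T)\|_\theta$. Applying the inequality above with $\Phi(\tau)=\mathds{1}_{[0,\tau^*]}(\tau)\,\tfrac1\varepsilon G_s(\varepsilon a(\tau)+\varepsilon b(\tau))$ reduces the claim to estimating $C\,\mathbb{E}(\int_0^{T_0}\mathds{1}_{[0,\tau^*]}\|\tfrac1\varepsilon G_s(\varepsilon a+\varepsilon b)\|_{L_{HS}}^\alpha\,d\tau)^{p/\alpha}$. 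Using $\|G_s(u)\|_{L_{HS}}\leq\|G(u)\|_{L_{HS}}$ together with \eqref{g} (valid since $\|\varepsilon a+\varepsilon b\|_\theta\leq r$ on $[0,\tau^*]$ for small $\varepsilon$), the prefactor $\tfrac1\varepsilon$ cancels the $\varepsilon$ inside $G_s$, leaving $C\,\mathbb{E}(\int_0^{T_0}\mathds{1}_{[0,\tau^*]}\|a(\tau)+b(\tau)\|_\theta^\alpha\,d\tau)^{p/\alpha}$.

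It then remains to split $\|a+b\|_\theta^\alpha\leq C(\|a\|_\theta^\alpha+\|b\|_\theta^\alpha)$ and use subadditivity of $x\mapsto x^{p/\alpha}$ (as $p<\alpha$). For the $a$-part I would invoke the definition of $\tau^*$ in \eqref{tao}, which forces $\|a(\tau)\|_\theta\leq\varepsilon^{-\kappa}$ on $[0,\tau^*]$, giving $C(\varepsilon^{-\alpha\kappa}T_0)^{p/\alpha}=C\varepsilon^{-\kappa p}$; this is the dominant contribution and the source of the claimed exponent. For the $b$-part the crucial point is that Corollary \ref{Co} already furnishes $\mathbb{E}(\int_0^{T_0}\mathds{1}_{[0,\tau^*]}\|b(\tau)\|_\theta^\alpha\,d\tau)^{p/\alpha}\leq C\varepsilon^{2p-6\kappa p}$, since this quantity appears as the middle term of the chain of inequalities establishing Corollary \ref{Co}. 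As $\kappa<\tfrac{2}{19}$ we have $2p-6\kappa p\geq -\kappa p$, so the $b$-contribution is of lower order. Collecting the two bounds gives $\mathbb{E}\sup_{0\leq T\leq\tau^*}\|K(T)\|_\theta^p\leq C_p\varepsilon^{-\kappa p}$.

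The main obstacle is the first step: justifying a genuinely uniform-in-time maximal estimate for the convolution $K$ despite the $T$-dependent kernel and the merely finite $p$-th moments ($p<\alpha$) of the $\alpha$-stable integrator. This is exactly where the Riesz--Nagy dilation is indispensable, as it embeds the contraction semigroup into a unitary group on an enlarged Hilbert space, turning the convolution into an ordinary stochastic integral to which the maximal inequality applies with a constant independent of $\varepsilon$. Once that reduction is in place, the remainder is bookkeeping with the stopping-time bounds and the already-established estimate of Corollary \ref{Co}.
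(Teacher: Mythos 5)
Your proposal is correct and follows essentially the same route as the paper's own proof: the Riesz--Nagy dilation (turning the $T$-dependent convolution kernel into a unitary group times a projection so that the moment inequality \eqref{Lmoment} applies), insertion of the indicator $\mathds{1}_{[0,\tau^*]}$ to handle the stopping time, the bound \eqref{g} on $G_s$, and the split into the $a$-part (bounded by $\varepsilon^{-\kappa p}$ via the definition of $\tau^*$) and the $b$-part (bounded by $\varepsilon^{2p-6\kappa p}$ via the middle term of Corollary \ref{Co}). The only cosmetic difference is that you phrase the dilation step as an appeal to the maximal inequality of \cite{BH09}, whereas the paper carries out the dilation explicitly inside the proof, but the underlying mechanism is identical.
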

\begin{proof}
There exists a Hilbert space $\tilde{\mathcal{H}}$ and a unitary strongly continuous group $\{U(t)\}_{t\in\mathbb{R}}$ on $\tilde{\mathcal{H}}$ such that $\mathcal{H}$ embeds isometrically into $\tilde{\mathcal{H}}$ and the contraction semigroup $e^{t\mathcal{A}}$ on $\mathcal{H}$ is a projection of $U(t)$, i.e., $\mathcal{P}U(t)=e^{t\mathcal{A}}$ on $\mathcal{H}$ for all $t\geq0$, $\mathcal{P}$ being the orthogonal projection from $\tilde{\mathcal{H}}$ onto $\mathcal{H}$. The moment inequality \eqref{Lmoment} and Riesz-Nagy theorem recognize that
\begin{align*}
\mathbb{E}\sup_{0\leq T\leq\tau^{*}}\|K(T)\|_{\theta}^{p}&\leq\mathbb{E}\sup_{0\leq T\leq T_{0}}\|\frac{1}{\varepsilon}\int_{0}^{T}e^{\varepsilon^{-2}(T-\tau)\mathcal{A}_{s}}\mathds{1}_{[0, \tau^{*}]}(\tau)G_{s}(\varepsilon a(\tau)+\varepsilon b(\tau))d\tilde{L}^{\alpha}(\tau)\|_{\theta}^{p}\\
&=\mathbb{E}\sup_{0\leq T\leq T_{0}}\|\frac{1}{\varepsilon}\int_{0}^{T}\mathcal{P}U_\varepsilon(T)U_\varepsilon(-\tau)\mathds{1}_{[0, \tau^{*}]}(\tau)G_{s}(\varepsilon a(\tau)+\varepsilon b(\tau))d\tilde{L}^{\alpha}(\tau)\|_{\theta}^{p}\\
 &\leq C_{p}\mathbb{E}\Big(\frac{1}{\varepsilon^{\alpha}}\int_{0}^{T_{0}}\|U_\varepsilon(-\tau)\mathds{1}_{[0, \tau^{*}]}(\tau)G_{s}(\varepsilon a(\tau)+\varepsilon b(\tau))\|_{L_{HS}}^{\alpha}d\tau\Big)^{\frac{p}{\alpha}}\\
&\leq C_{p}\mathbb{E}\Big(\int_{0}^{T_{0}}\mathds{1}_{[0, \tau^{*}]}(\tau)\|a(\tau)+ b(\tau)\|_{\theta}^{\alpha}d\tau\Big)^{\frac{p}{\alpha}}\\
 &\leq C_{\alpha,p}\mathbb{E}\Big[\int_{0}^{T_{0}}\mathds{1}_{[0, \tau^{*}]}(\tau)\|a(\tau)\|_{\theta}^{\alpha}d\tau\Big]^{\frac{p}{\alpha}}+C_{\alpha,p}\mathbb{E}\Big[\int_{0}^{T_{0}}\mathds{1}_{[0, \tau^{*}]}(\tau)\|b(\tau)\|_{\theta}^{\alpha}d\tau\Big]^{\frac{p}{\alpha}}.
\end{align*}
Notice that when $\tau\leq\tau^{*}$, the first term on the right-hand side is bounded by $C\varepsilon^{-\kappa p}$ uniformly in time. Consequently, we observe from Corollary \ref{Co} that
\begin{equation*}
\mathbb{E}\sup_{0\leq T\leq\tau^{*}}\|K(T)\|_{\theta}^{p}\leq C_{p}\varepsilon^{-\kappa p}.
\end{equation*}
\end{proof}

Furthermore, by the definition $b(T)=Q(T)+I(T)+J(T)+K(T)$, Remark \ref{Q}, Lemma \ref{IJ} and Lemma \ref{M} enable us to gain a detailed understanding of the bound on $b$.
\begin{lemma}\label{B}
With assumptions $(\textbf{A1})$-$(\textbf{A5})$ and $\|b(0)\|_{\theta}\leq\varepsilon^{-\kappa}$, for any $p\in(0,\alpha)$, there exists a constant $C_p>0$ such that
\begin{equation}\label{BB}
\mathbb{E}\sup_{0\leq T\leq\tau^{*}}\|b(T)\|_{\theta}^{p}\leq C_{p}\varepsilon^{-\kappa p}.
\end{equation}
\end{lemma}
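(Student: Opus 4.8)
The plan is to exploit the decomposition $b(T)=Q(T)+I(T)+J(T)+K(T)$ from \eqref{QIJK} and simply assemble the four bounds already established, so that the only genuine work—the uniform-in-time control of the stochastic convolution $K$—has already been completed in Lemma \ref{M} via the Riesz-Nagy trick.

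First I would apply the elementary inequality $(x_1+x_2+x_3+x_4)^p\leq C_p(x_1^p+x_2^p+x_3^p+x_4^p)$, valid for all $p>0$ and nonnegative $x_i$, together with the triangle inequality, to obtain
\begin{equation*}
\sup_{0\leq T\leq\tau^{*}}\|b(T)\|_{\theta}^{p}\leq C_p\Big(\sup_{0\leq T\leq\tau^{*}}\|Q(T)\|_{\theta}^{p}+\sup_{0\leq T\leq\tau^{*}}\|I(T)\|_{\theta}^{p}+\sup_{0\leq T\leq\tau^{*}}\|J(T)\|_{\theta}^{p}+\sup_{0\leq T\leq\tau^{*}}\|K(T)\|_{\theta}^{p}\Big).
\end{equation*}
Taking expectations and inserting the four estimates—Remark \ref{Q} for $Q$, the bounds \eqref{I} and \eqref{J} of Lemma \ref{IJ} for $I$ and $J$, and \eqref{MT} of Lemma \ref{M} for $K$—gives
\begin{equation*}
\mathbb{E}\sup_{0\leq T\leq\tau^{*}}\|b(T)\|_{\theta}^{p}\leq C_p\big(\|b(0)\|_{\theta}^{p}+\varepsilon^{2p-2\kappa p}+\varepsilon^{2p-6\kappa p}+\varepsilon^{-\kappa p}\big).
\end{equation*}

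Next I would invoke the hypothesis $\|b(0)\|_{\theta}\leq\varepsilon^{-\kappa}$ to replace the first term by $\varepsilon^{-\kappa p}$. It then remains to identify the dominant contribution as $\varepsilon\to0$: since $\kappa\in(0,\frac{2}{19})$ one has $2-2\kappa>0$ and $2-6\kappa>0$, so the terms $\varepsilon^{2p-2\kappa p}$ and $\varepsilon^{2p-6\kappa p}$ tend to $0$, whereas $\varepsilon^{-\kappa p}\to\infty$ and hence dominates them for small $\varepsilon$. Absorbing all constants into $C_p$ then yields the claimed bound $\mathbb{E}\sup_{0\leq T\leq\tau^{*}}\|b(T)\|_{\theta}^{p}\leq C_{p}\varepsilon^{-\kappa p}$.

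The only subtlety worth recording is that all four supremum-in-time bounds are taken up to the same stopping time $\tau^{*}$, so they combine directly without any further truncation; in particular, no part of this argument reintroduces the difficulty of large jumps, which was already absorbed into Lemma \ref{M}. Thus I expect no real obstacle here beyond bookkeeping of the exponents, the essential step—the uniform-in-time bound on $K$—having been carried out earlier.
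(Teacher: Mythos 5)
Your proposal is correct and matches the paper's proof exactly: the paper states that Lemma \ref{B} follows from the decomposition $b=Q+I+J+K$ in \eqref{QIJK} combined with Remark \ref{Q}, Lemma \ref{IJ}, Lemma \ref{M}, and the hypothesis $\|b(0)\|_{\theta}\leq\varepsilon^{-\kappa}$, which is precisely your argument. Your bookkeeping of the exponents (the terms from $I$ and $J$ vanish as $\varepsilon\to0$, so the $Q$ and $K$ contributions of order $\varepsilon^{-\kappa p}$ dominate) is also correct.
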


Before continuing, we construct a subset of $\Omega$, which enjoys nearly full probability.

\begin{definition}\label{taoxin}
For $\kappa\in(0,\frac{1}{18})$, from the definition of $\tau^{*}$ as in \eqref{tao}, define the set $\Omega^{*}\subset\Omega$ of all $\omega\in\Omega$ such that all these estimates
\begin{equation*}
\sup_{0\leq T\leq\tau^{*}}\|a(T)\|_{\theta}<\varepsilon^{-\kappa},~~~\sup_{0\leq T\leq\tau^{*}}\|b(T)\|_{\theta}<\varepsilon^{-2\kappa},~~~\sup_{0\leq T\leq\tau^{*}}\|\mathcal{R}_c(T)\|_{\theta}<\varepsilon^{2-19\kappa}~~~\text{and}~~~\|\mathcal{R}_s(T)\|_{L^p([0,\tau^{*}];\mathcal{H}^{\theta})}<\varepsilon^{3-7\kappa }
\end{equation*}
hold.
\end{definition}
\begin{lemma}
The set $\Omega^{*}$ in Definition \ref{taoxin} has approximately probability $1$.
\end{lemma}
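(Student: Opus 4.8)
The plan is to bound the probability of the complement $(\Omega^{*})^{c}$ and show it tends to $0$ as $\varepsilon\to0$, which is equivalent to $\mathbb{P}(\Omega^{*})\to1$. First I would fix any exponent $p\in(1,\alpha)$, so that the moment bounds \eqref{at}, \eqref{RR}, \eqref{BB} and Corollary \ref{Co} are simultaneously at my disposal, and record the two decompositions $\mathcal{R}_{c}(T)=\varepsilon[a(T)-\varphi(T)]$ and $\mathcal{R}_{s}(T)=\varepsilon b(T)$, which follow from \eqref{huaR} together with $a-\varphi\in\mathcal{N}$ and $b\in\mathcal{S}$. I would also observe that the initial-data hypothesis $\|u(0)\|_{\theta}\leq\varepsilon^{1-\kappa/2}$ together with the orthogonality of $\mathcal{P}_{c},\mathcal{P}_{s}$ on $\mathcal{H}^{\theta}$ forces $\|a(0)\|_{\theta}\leq\varepsilon^{-\kappa/2}$ and $\|b(0)\|_{\theta}\leq\varepsilon^{-\kappa/2}\leq\varepsilon^{-\kappa}$, so the standing assumptions of Lemmas \ref{ab} and \ref{B} are met.

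Then I would dispatch the four defining events of $\Omega^{*}$ one by one with Markov's inequality. For the amplitude bound, \eqref{at} gives
\[
\mathbb{P}\Big(\sup_{0\leq T\leq\tau^{*}}\|a(T)\|_{\theta}\geq\varepsilon^{-\kappa}\Big)\leq\varepsilon^{\kappa p}\,\mathbb{E}\sup_{0\leq T\leq\tau^{*}}\|a(T)\|_{\theta}^{p}\leq C_{p}(\|a(0)\|_{\theta}^{p}+1)\varepsilon^{\kappa p}\leq C_{p}\varepsilon^{\kappa p/2}.
\]
For the fast-mode bound, \eqref{BB} gives
\[
\mathbb{P}\Big(\sup_{0\leq T\leq\tau^{*}}\|b(T)\|_{\theta}\geq\varepsilon^{-2\kappa}\Big)\leq\varepsilon^{2\kappa p}\,C_{p}\varepsilon^{-\kappa p}=C_{p}\varepsilon^{\kappa p},
\]
and for $\mathcal{R}_{c}$, \eqref{RR} gives
\[
\mathbb{P}\Big(\sup_{0\leq T\leq\tau^{*}}\|\mathcal{R}_{c}(T)\|_{\theta}\geq\varepsilon^{2-19\kappa}\Big)\leq\varepsilon^{-(2-19\kappa)p}\,C_{p}\varepsilon^{(2-18\kappa)p}=C_{p}\varepsilon^{\kappa p}.
\]
For the last event I would use $\mathcal{R}_{s}=\varepsilon b$ and raise the norm to the $p$-th power, so that $\mathbb{E}\|\mathcal{R}_{s}\|_{L^{p}([0,\tau^{*}];\mathcal{H}^{\theta})}^{p}=\varepsilon^{p}\,\mathbb{E}\int_{0}^{\tau^{*}}\|b(\tau)\|_{\theta}^{p}\,d\tau\leq C_{p}\varepsilon^{(3-6\kappa)p}$ by Corollary \ref{Co}; Markov then yields $\mathbb{P}\big(\|\mathcal{R}_{s}\|_{L^{p}([0,\tau^{*}];\mathcal{H}^{\theta})}\geq\varepsilon^{3-7\kappa}\big)\leq C_{p}\varepsilon^{\kappa p}$.

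A union bound over these four estimates gives $\mathbb{P}((\Omega^{*})^{c})\leq C_{p}(\varepsilon^{\kappa p/2}+\varepsilon^{\kappa p})\to0$ as $\varepsilon\to0$, hence $\mathbb{P}(\Omega^{*})\to1$. There is no deep obstacle in this lemma, as it merely repackages the earlier moment estimates through Chebyshev; but a few points need care. Each Markov bound leaves exactly the surplus power $\varepsilon^{\kappa p}$ (or $\varepsilon^{\kappa p/2}$ for the $a$-term), which is positive for every $\kappa>0$, while the underlying smallness of the moment bounds themselves rests on the inherited restriction $\kappa<\tfrac{1}{18}$ (so that exponents such as $1-18\kappa$ in Lemma \ref{ab} and \eqref{Re} stay positive). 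The factor $\|a(0)\|_{\theta}^{p}$ appearing in \eqref{at} must be absorbed using the initial-data assumption $\|a(0)\|_{\theta}\leq\varepsilon^{-\kappa/2}$. Finally, the identification $\mathcal{R}_{s}=\varepsilon b$ is what lets me invoke the $L^{p}$-in-time bound of Corollary \ref{Co} for the last condition rather than a uniform-in-time bound, the latter being unavailable precisely because of the large jumps of the $\alpha$-stable noise.
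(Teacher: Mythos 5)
Your proposal is correct and follows essentially the same route as the paper: a union bound over the four defining events of $\Omega^{*}$, Chebyshev/Markov's inequality applied to each, and the previously established moment bounds \eqref{at}, \eqref{BB}, \eqref{RR} and Corollary \ref{Co}, with the same identifications $\mathcal{R}_c=\varepsilon[a-\varphi]$, $\mathcal{R}_s=\varepsilon b$ and the same use of the initial-data hypothesis to absorb $\|a(0)\|_{\theta}^{p}$. Your write-up is in fact slightly tidier than the paper's (you choose the Chebyshev exponent equal to the $p$ of Corollary \ref{Co} for the $\mathcal{R}_s$-event and keep the expectation explicit, where the paper uses a separate exponent $q$ and omits the expectation in a display), but this is a cosmetic difference, not a different argument.
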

\begin{proof}
It is natural to consider
\begin{align*}
\mathbb{P}(\Omega^{*})
\geq & 1-\mathbb{P}(\sup_{0\leq T\leq\tau^{*}}\|a(T)\|_{\theta}\geq\varepsilon^{-\kappa})-\mathbb{P}(\sup_{0\leq T\leq\tau^{*}}\|b(T)\|_{\theta}\geq\varepsilon^{-2\kappa})
\\ &
-\mathbb{P}(\sup_{0\leq T\leq\tau^{*}}\|\mathcal{R}_c(T)\|_{\theta}\geq\varepsilon^{2-19\kappa})-\mathbb{P}(\|\mathcal{R}_s(T)\|_{L^p([0,\tau^{*}];\mathcal{H}^{\theta})}>\varepsilon^{3-7\kappa }).
\end{align*}
Admittedly, Chebychev's inequality, Corollary \ref{Co}  and \eqref{at}-\eqref{BB} illustrate
\begin{align*}
\mathbb{P}(\Omega^{*})
\geq &
1-(\varepsilon^{-\kappa})^{-q}\mathbb{E}\sup_{0\leq T\leq\tau^{*}}\|a(T)\|_{\theta}^{q}
-(\varepsilon^{-2\kappa})^{-q}\mathbb{E}\sup_{0\leq T\leq\tau^{*}}\|b(T)\|_{\theta}^{q}
\\ & -(\varepsilon^{2-19\kappa})^{-q}\mathbb{E}\sup_{0\leq T\leq\tau^{*}}\|\mathcal{R}_c(T)\|_{\theta}^{q}-(\varepsilon^{3-7\kappa})^{-q}\|\mathcal{R}_s(T)\|_{L^p([0,\tau^{*}];\mathcal{H}^{\theta})}^{q}
\\
\geq & 1
-C\varepsilon^{\kappa q}\varepsilon^{-\frac{1}{2}\kappa q}-C\varepsilon^{2\kappa q}\varepsilon^{-\kappa q}
-C\varepsilon^{(19\kappa-2)q}\varepsilon^{(2-18\kappa)q}-C\varepsilon^{(7\kappa-3)q}\varepsilon^{(3-6\kappa)q}
\\ & \underset{\varepsilon\rightarrow0}{\longrightarrow}1.
\end{align*}
\end{proof}
Lastly, at the level of the present considerations it is relevant to point out that $\|a\|<\varepsilon^{-\kappa}$ on $[0,T_{0}]$ with probability almost $1$. Let us finally prove our main theorem.

\begin{proof}[Proof of Theorem \ref{MR}]
As the definition of $\Omega^{*}$ and $\tau^{*}$ suggest
\begin{equation*}
\Omega^{*}\subseteq\Big\{\sup_{0\leq T\leq\tau^{*}}\|a(T)\|_{\theta}<\varepsilon^{-\kappa},~~\sup_{0\leq T\leq\tau^{*}}\|b(T)\|_{\theta}<\varepsilon^{-2\kappa}\Big\}\subseteq\{\tau^{*}=T_{0}\}\subseteq\Omega.
\end{equation*}
This permits us to obtain on $\Omega^{*}$ that
\begin{equation*}
\sup_{0\leq T\leq T_{0}}\|\mathcal{R}_c(T)\|_{\theta}=\sup_{0\leq T\leq\tau^{*}}\|\mathcal{R}_c(T)\|_{\theta}<\varepsilon^{2-19\kappa}~~~\text{and}~~~\|\mathcal{R}_s(T)\|_{L^p([0,T_0];\mathcal{H}^{\theta})}=\|\mathcal{R}_s(T)\|_{L^p([0,\tau^{*}];\mathcal{H}^{\theta})}< \varepsilon^{3-7\kappa },
\end{equation*}
such that
\begin{equation*}
\mathbb{P}\big(\sup_{0\leq T\leq T_{0}}\|\mathcal{R}_c(T)\|_{\theta}\geq\varepsilon^{2-19\kappa}\big)\leq1-\mathbb{P}(\Omega^{*})\underset{\varepsilon\rightarrow0}{\longrightarrow}0~~~\text{and}~~~\mathbb{P}\big(\|\mathcal{R}_s(T)\|_{L^p([0,T_0];\mathcal{H}^{\theta})}\geq\varepsilon^{3-7\kappa }\big)\leq1-\mathbb{P}(\Omega^{*})\underset{\varepsilon\rightarrow0}{\longrightarrow}0,
\end{equation*}
respectively. Recalling representation \eqref{huaR} of $\mathcal{R}$, the proof is finished.
\end{proof}
\begin{remark}
The uniqueness in the previous theorem should be understood by choosing a version of the solution, i.e., by changing it on null sets.
\end{remark}


\section{Examples and applications}
In this section, we provide two examples to corroborate our analytical results.
The first example is the Ginzburg-Landau equation,
which is an effective amplitude equation for the description of pattern forming systems close to the first instability.
Consider the following stochastic Allen-Cahn equation (real Ginzburg-Landau equation), subject to Dirichlet boundary conditions,
with linear multiplicative noise on the domain $D=[0,\pi]$ of the type
\begin{equation}\label{you}
\partial_{t}u(t)=(\partial_{x}^{2}+1)u(t)+\gamma\varepsilon^{2}u(t)-u^{3}(t)+\varepsilon^{\frac{2}{\alpha}}u(t)\partial_{t}Q^{1/2}L^{\alpha}(t),
\end{equation}
where $\mathcal{A}:=\partial_{x}^{2}+1$, $\mathcal{L}:=\gamma\mathcal{I}$, $\mathcal{F}(u):=-u^{3}$, and $G(u)= u Q^{1/2}$, the multiplication operator combined with the covariance operator $Q$ defined below.

We scale the linear term to be close to bifurcation by choosing $\gamma\varepsilon^{2}$ and choose a noise strength of order $\varepsilon^{\frac{2}{\alpha}}$ with the index of stability $\alpha\in(1,2)$. In this case both the noise and the linear (in)stability will survive in the amplitude equation.

The noise $L^{\alpha}(t)$ is a cylidrical $\alpha$-stable L\'evy process on some stochastic basis $(\Omega,\mathcal{F},\{\mathcal{F}_t\}_{t\in\mathbb{R}},\mathbb{P})$ defined via
\begin{equation*}
L^{\alpha}(t)=\sum_{k=1}^{\infty} L_{k}^{\alpha}(t)e_{k},~~~~~~t\geq0.
\end{equation*}
and the covariance operator $Q$ is defined by $Qe_k=\beta^2_k e_k$ where  $\{\beta_{k}\}_{k=1}^{\infty}$ is one given sequence of positive numbers satisfying
\[
 \sum_{k=1}^\infty \beta_k <\infty\quad
 \text{ and }\quad
 \sum_{k=1}^\infty \beta_k^2 k^2 <\infty.
\]
Let $\mathcal{H}=L^{2}([0,\pi])$ be the Hilbert space of all square integrable real-valued functions defined on the interval $[0,\pi]$ we are going to work in.
Denote $H^1_{0}([0,\pi])$ as the Sobolev space of functions with square integrable derivatives that satisfy Dirichlet boundary conditions.

The existence and uniqueness of global mild solutions (i.e., $\tau_{ex}=\infty$) for equation \eqref{you} based on a Galerkin approximation is standard, so we won't go into detail here.

The eigenvalues of $-\mathcal{A}=-\partial_{x}^{2}-1$ are calculated accurately to be $\lambda_{k}=k^{2}-1$, $k=1,2,...$, and then $\lambda_{k} \to \infty$ for $k\to \infty$. The associated eigenvectors are $e_{k}(x)=({2}{\pi}^{-1})^{1/2}\sin(kx)$
and the dominant space is $\mathcal{N}=\text{span}\{e_{1}\}$.
Hence assumptions $(\textbf{A1})$ is valid.

Arguably, assumption $(\textbf{A2})$ is true for example for any $\theta>\frac{1}{2}$
and $\sigma=0$. As for the norm in $\mathcal{H}^{\theta/2}=H_0^\theta$, we then have
$\|uv\|_{H_0^\theta}\leq C\|u\|_{\theta}\|v\|_{H_0^\theta}$.
For simplicity, we will fix $\theta=1$.

It should be noted, that on the one-dimensional space $\mathcal{N}$
the $H_0^{\theta}$-norm is just a multiple of the $\mathcal{H}$-norm.
Since $\mathcal{F}$ is a standard cubic nonlinearity, for $u,w\in\mathcal{N}$,
\begin{equation*}
\langle\mathcal{F}_{c}(u),u\rangle=-\int_{0}^{\pi}u^{4}(x)dx\leq0,\,\,\,\,\langle\mathcal{F}_{c}(u,u,w),w\rangle=-\int_{0}^{\pi}u^{2}(x)\omega^{2}(x) dx\leq0,
\end{equation*}
and condition \eqref{fvw} holds for some positive constants $C_{0}$, $C_{1}$ and $C_{2}$, and thus assumption $(\textbf{A3})$ is true.

The Hilbert-Schmidt operator
$G:\mathcal{H}^{1}\rightarrow L_{HS}(\mathcal{H},\mathcal{H}^{1})$
satisfies $G(0)=0$
and
\begin{equation*}
\|G(u)\|_{L_{HS}}^2
=\sum_{k=1}^{\infty}\|u\cdot Q^{1/2} e_{k}\|_{H^1_0}^{2}
\leq C\sum_{k=1}^{\infty} \beta_k^2 \|u\|_{H^1_0}^{2}\|e_{k}\|_{H^1_0}^{2}
\leq C\sum_{k=1}^{\infty} k^2\beta_k^2 \|u\|_{H^1_0}^{2} <\infty.
\end{equation*}
In addition, $G'(u)\cdot v= Q^{1/2}v$ and $G''(u)=0$.
So assumptions $(\textbf{A4})$ and  $(\textbf{A5})$ follow immediately.

Under our main assumptions, the stochastic Allen-Cahn equation \eqref{you} is well approximated by the amplitude equation
\begin{equation*}
\partial_{T}\varphi(T)=\mathcal{L}_{c}\varphi(T)+\mathcal{F}_{c}(\varphi(T))+[G'_{c}(0)\cdot \varphi(T)]\partial_{T}\tilde{L}^{\alpha}(T)\\
              =\mathcal{P}_{c}[\gamma\varphi(T)-\varphi^{3}(T)]+\mathcal{P}_{c}\varphi(T)Q^{1/2}\partial_{T}\tilde{L}^{\alpha}(T),
\end{equation*}
where $\varphi\in\mathcal{N}$
is determined by the rescaled solution $u(t,x)\approx \varepsilon\varphi(\varepsilon^{2}t, x)$ of \eqref{you}.

To be more specific, we calculate the amplitude equation for the actual amplitude of
$\varphi=\phi\sin(\cdot)$.
We have
$\mathcal{P}_{c}\varphi=\frac{2}{\pi}\int_{0}^{\pi}\phi\sin^{2}(y)dy\sin(\cdot)=\phi\sin(\cdot)$
and
$\mathcal{P}_{c}\mathcal{F}(\varphi)=-\phi^{3}\frac{2}{\pi}\int_{0}^{\pi}\sin(y)\sin^{3}(y)dy\sin(\cdot)=-\frac{3}{4}\phi^{3}\sin(\cdot)$.
Moreover,
\begin{equation*}
\mathcal{P}_{c}\varphi(T)Q^{1/2}\partial_{T}\tilde{L}^{\alpha}(T)
=\phi(T)\sum_{k=1}^{\infty}\beta_k \mathcal{P}_{c}[\sin(\cdot)e_{k}]\partial_{T}\tilde{L}_{k}^{\alpha}(T)
=\phi(T)\sum_{k=1}^{\infty}\delta_{k}\beta_k\partial_{T}\tilde{L}_{k}^{\alpha}(T)\sin(\cdot),
\end{equation*}
where
$\mathcal{P}_{c}[\sin(x)e_{k}(x)]=\mathcal{P}_{c}[\sqrt{\frac{2}{\pi}}\sin(x)\sin(kx)]=(\frac{2}{\pi})^{\frac{3}{2}}\int_{0}^{\pi}\sin^{2}(y)\sin(ky)dy\sin(\cdot)=\delta_{k}\sin(\cdot)$
with
\begin{eqnarray*}
\delta_{k}:=\left\{\begin{array}{l}
(\frac{2}{\pi})^{\frac{3}{2}}\frac{2(\cos(k\pi)-1)}{k(k^{2}-4)},\,\,\,k\neq2,\\
0,\,\,\,\,\,\,\,\,\,\,\,\,\,\,\,\,\,\,\,\,\,\,\,\,\,\,\,\,\,\,\,\,\,\,\,k=2.
 \end{array}\right.
\end{eqnarray*}
Hence the amplitude equation for \eqref{you} is
\begin{equation}\label{amp}
\partial_{T}\phi(T)
=[\gamma\phi(T)-\frac{3}{4}\phi^{3}(T)]+\phi(T)\partial_{T} \sum_{k=1}^{\infty}\beta_k\delta_{k}\tilde{L}_{k}^{\alpha}(T),
\end{equation}
where the deterministic part describes a forward-pitchfork bifurcation. The deterministic counterpart $\dot{\phi}=\gamma\phi-\frac{3}{4}\phi^{3}$ has either one or three fixed points depending on the value of the parameter $\gamma$. When $\gamma\leq0$, there is one stable fixed point at $\phi=0$.
When $\gamma>0$, there are three fixed points at $\phi=0$, $2\sqrt{\gamma/3}$ and $-2\sqrt{\gamma/3}$.

It is remarkable that in the driving L\'evy process all infinitely many one-dimensional $\alpha$-stable L\'evy processes $L_k^{\alpha}$ contribute to the noise in the amplitude equation.
By means of Monte Carlo Simulation, several trajectories of stochastic system \eqref{amp} approach the unique stable equilibrium state $\phi=0$ with the negative bifurcation parameter $\gamma=-0.05$ and the noise intensity $\delta=0.1$. The decrease of the index of stability from $\alpha=1.9$ to $\alpha=1.1$  leads to the increase of the number of the big jumps, as shown in Figure \ref{Fig2}. According to Figure \ref{Fig3},  several trajectories of stochastic system \eqref{amp} close to the two stable equilibrium states near $\phi=1$ and $\phi=-1$ for  the bifurcation parameter $\gamma=0.4$ and the noise intensity $\delta=0.05$. When  the index of stability varies from $\alpha=1.8$ to $\alpha=1.2$, the number of the big jumps increases with decreasing $\alpha$. If we compare  Figure \ref{Fig2} and Figure \ref{Fig3}, stochastic system \eqref{amp} has two stable equilibrium states and one unstable equilibrium state for $\gamma=0.4$ , and only one stable equilibrium state for $\gamma=-0.05$, the change of the number and the stability of equilibrium states exhibits an interesting stochastic bifurcation phenomenon.


\begin{figure}
\begin{center}
\begin{minipage}{3.2in}
\leftline{(i)}
\includegraphics[width=3.2in]{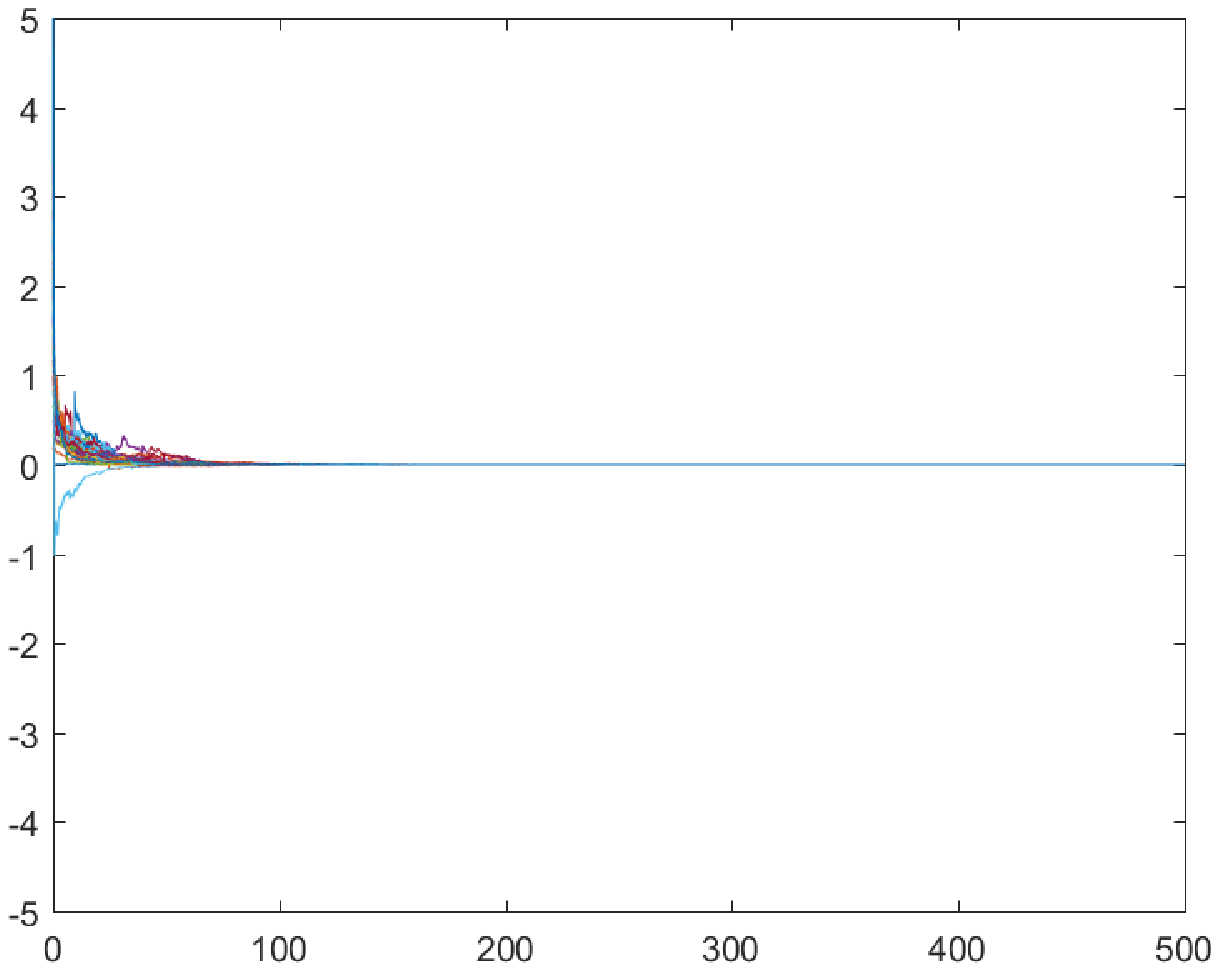}
\end{minipage}
\hfill
  \begin{minipage}{3.2in}
\leftline{(ii)}
\includegraphics[width=3.2in]{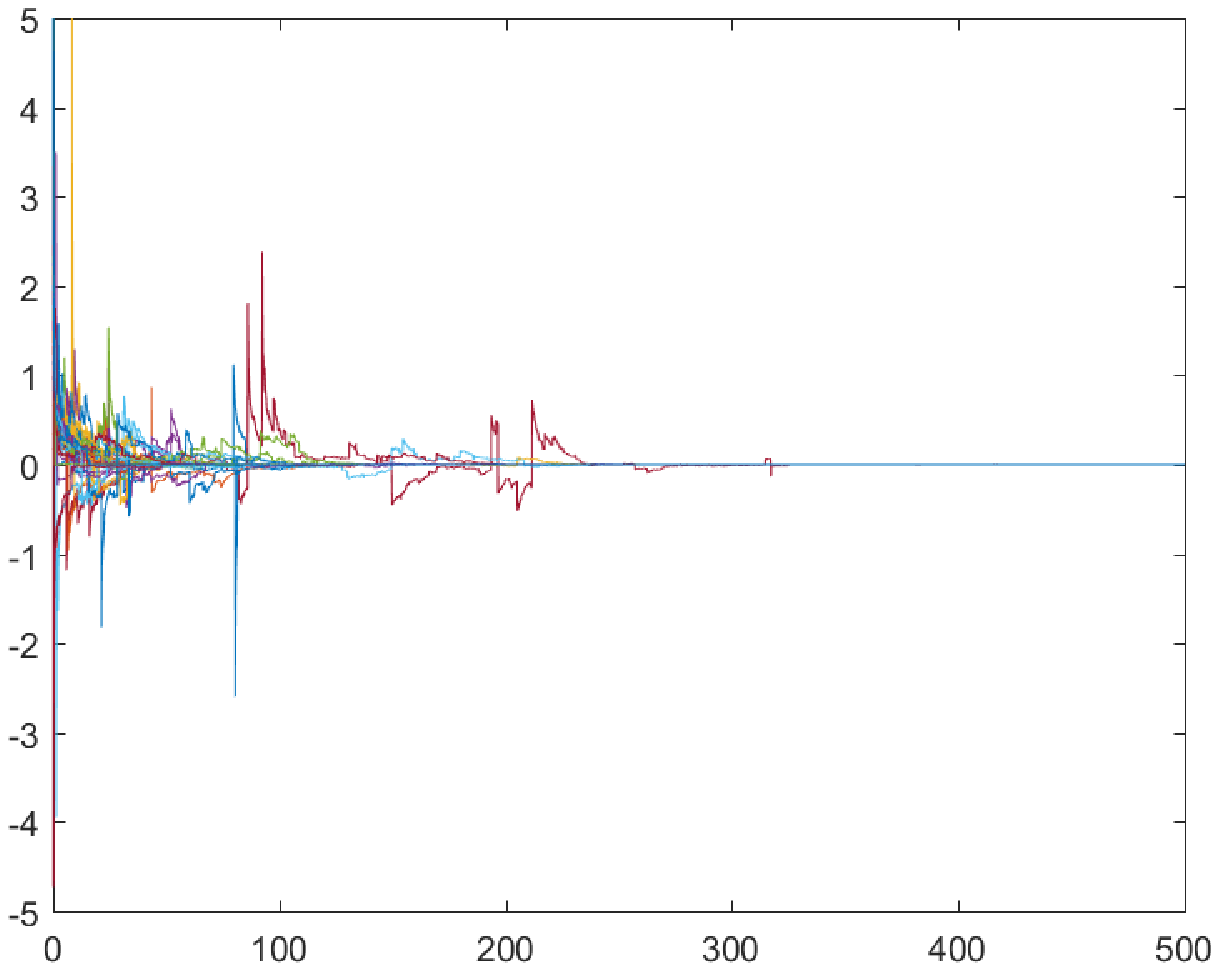}
\end{minipage}
\caption{Use Monte Carlo Simulation to  simulate 50 trajectories of stochastic system \eqref{amp} for the bifurcation parameter $\gamma=-0.05$ and the noise intensity $\delta=0.1$: (i)  the index of stability $\alpha=1.9$; (ii) the index of stability $\alpha=1.1$.
For small $\alpha$ we also see large jumps that lead to large error terms in the estimates.}\label{Fig2}
\end{center}
\end{figure}

\begin{figure}
\begin{center}
\begin{minipage}{3.2in}
\leftline{(a)}
\includegraphics[width=3.2in]{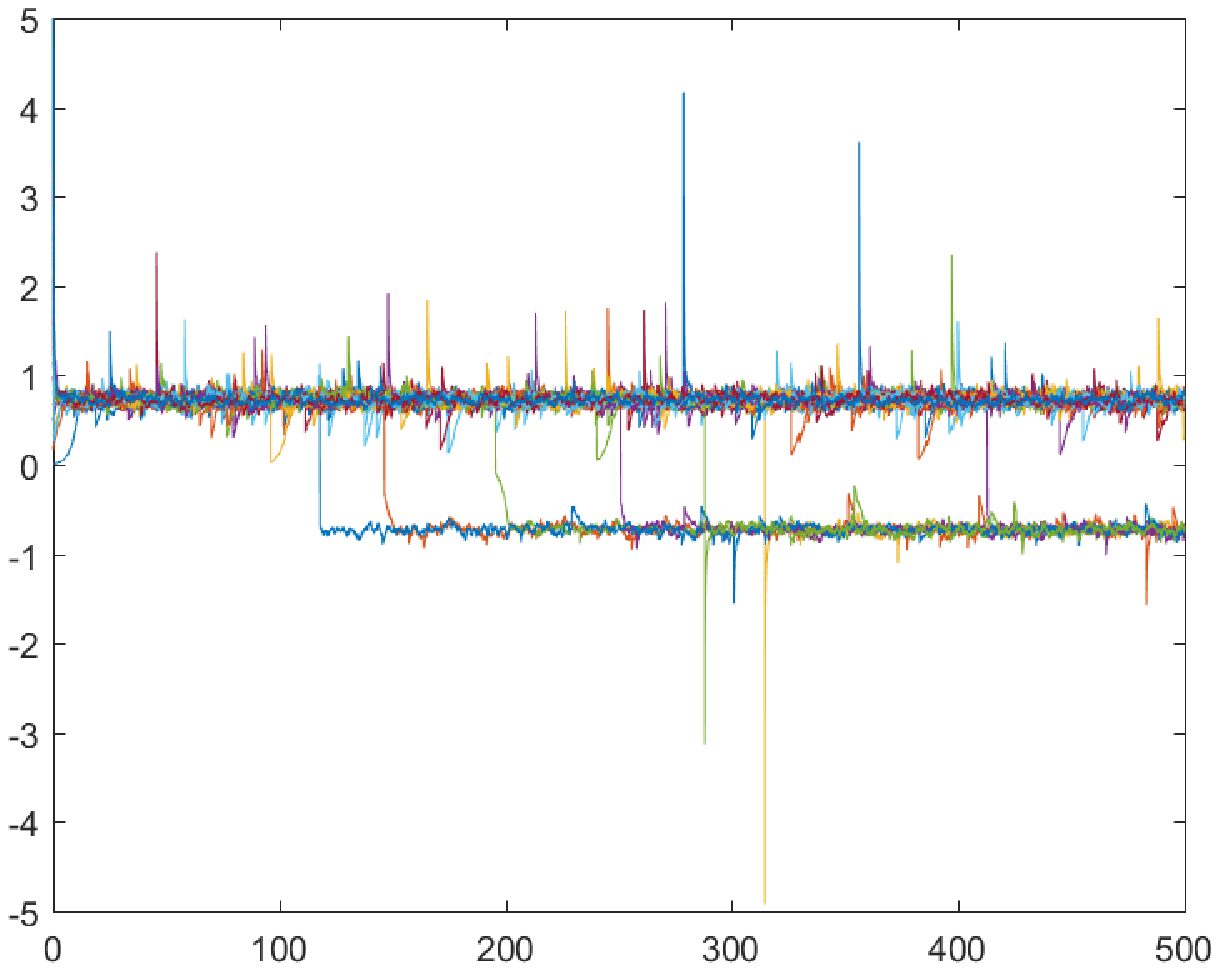}
\end{minipage}
\hfill
  \begin{minipage}{3.2in}
\leftline{(b)}
\includegraphics[width=3.2in]{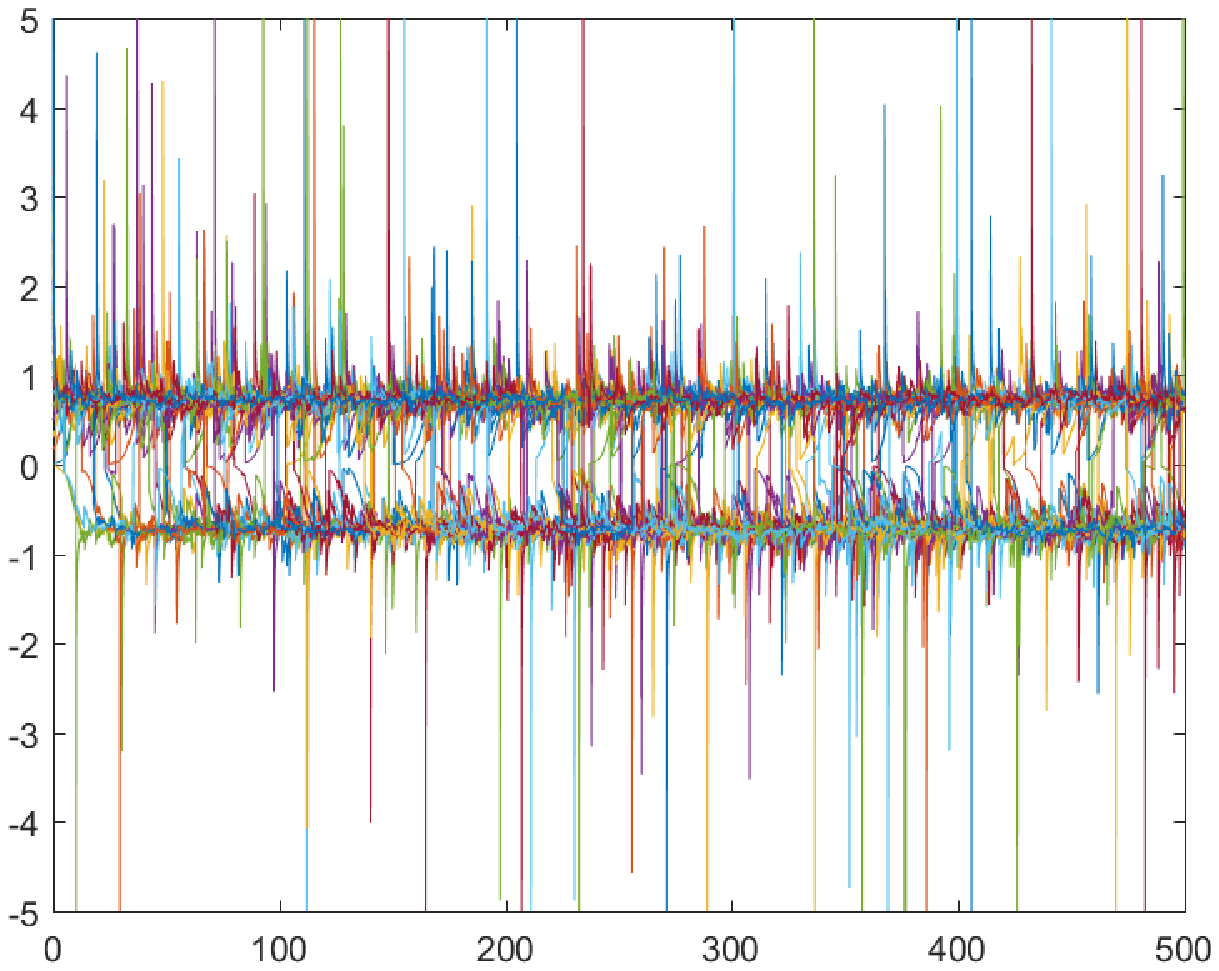}
\end{minipage}
\caption{ Use Monte Carlo Simulation to  simulate 50 trajectories of stochastic system \eqref{amp} for the bifurcation parameter $\gamma=0.4$ and the noise intensity $\delta=0.05$: (a)  the index of stability $\alpha=1.8$; (b) the index of stability $\alpha=1.2$.
Again, we see the large jumps that lead to large error terms that cannot be controlled uniformly in time. }\label{Fig3}
\end{center}
\end{figure}

The second example, which we will discuss very briefly,
is the following surface growth model
\begin{equation}\label{Xu}
\partial_{t}u(t)=-\Delta^{2}u(t)-\mu\Delta u(t)+\nabla\cdot(|\nabla u(t)|^{2}\nabla u(t))+\varepsilon^{\frac{2}{\alpha}}u(t)\partial_{t}Q^{1/2}L^{\alpha}(t),
\end{equation}
subject to periodic boundary conditions on the interval $[0,2\pi]$. In order to get close to the change of stability, we consider $\mu=1+\varepsilon^{2}\gamma$. Therefore,
\begin{equation*}
\mathcal{A}=-\Delta^{2}-\Delta,\,\,\,\,\mathcal{L}=-\gamma\Delta\,\,\,\,\text{and}\,\,\,\,\mathcal{F}(u)=\nabla\cdot(|\nabla u|^{2}\nabla u).
\end{equation*}
One can check that assumptions $(\textbf{A1})$-$(\textbf{A5})$ are satisfied.
The eigenvalues of $-\mathcal{A}=\Delta^{2}+\Delta$ are $\lambda_{k}=k^{4}-k^{2}$, $k=1,2,...$, and then $\lim_{k\to \infty}\lambda_{k}=\infty$.
Consider for $k\in\mathbb{N}$ the eigenfunctions
$e_{k}(x)=\frac{1}{\sqrt{\pi}}\sin(kx)$ and $e_{-k}(x)=\frac{1}{\sqrt{\pi}}\cos(kx)$,
and $e_0(x)=1/\sqrt{2\pi}$.
We obtain $\mathcal{N}=\text{span}\{e_{1},\ e_{-1}\}$
and we will work in the space $\mathcal{H}=L^{2}([0,2\pi])$. And the space $\mathcal{H}^{\theta/4}$ is the standard Sobolev space $H^\theta_{per}$.

Furthermore, if $u=\gamma_{1}\sin+\gamma_2\cos \in\mathcal{N}$, then $\mathcal{F}_{c}(u)=-\frac{3}{4}|\gamma|^{2}(\gamma_1 \sin, \gamma_2\cos)$
and $\langle\mathcal{F}_{c}(u),u\rangle=-\frac{3\pi}{4}|\gamma|^{4}\leq0$.
Moreover, for $\theta=\sigma$,
\begin{equation*}
\|\mathcal{F}(u)\|_{L^{2}}=\|\partial_{x}(\partial_{x}u)^{3}\|_{L^{2}}\leq C\|(\partial_{x}u)^{3}\|_{1}\leq C\|\partial_{x}u\|_{H^1}^{3}\leq C\|u\|_{H^2}^{3}.
\end{equation*}
As before,  $L^{\alpha}$ is a cylindrical L\'evy process and the covariance operator is
$Q^{1/2}$ with $Qe_k=\beta_k^2e_k$.
We suppose $\sum_k\beta_k <\infty$ and $\sum_k\beta_k^2 k^4 <\infty$, so that the assumptions on $G$ are satisfied.

Utilizing the approximation $u(t,x) \approx \varepsilon\varphi(\varepsilon^{2}t, x)$
of \eqref{Xu}, the amplitude equation takes the form
\begin{equation*}
\partial_{T}\varphi(T)=\mathcal{L}_{c}\varphi(T)+\mathcal{F}_{c}(\varphi(T))+[G'_{c}(0)\cdot \varphi(T)]\partial_{T}\tilde{L}^{\alpha}(T)\\
              =\mathcal{P}_{c}[-\mu\Delta\varphi(T)+\nabla\cdot(|\nabla \varphi(T)|^{2}\nabla \varphi(T))]+\mathcal{P}_{c}\varphi(T)\partial_{T}Q^{1/2}\tilde{L}^{\alpha}(T).
\end{equation*}

Suppose that $\phi(T)= (\gamma_1(T)\sin,\gamma_2(T)\cos)$
we can reduce the previous system to ($j=1,2$)
\[
d\gamma_j = [\mu\gamma_j - \frac34 |\gamma|^2 \gamma_j]dT  + \gamma_j d \hat{Z}^\alpha_j
\]
where the driving $\alpha$-stable L\'evy process $\hat{Z}^\alpha$ depends only on $L^\alpha_0,\ L^\alpha_2,\ L^\alpha_{-2}$.


\section{Conclusions and future challenges}
In this work, we analysed a class of stochastic partial differential equations of the form \eqref{GinzburgLandau}
driven by cylindrical $\alpha$-stable L\'evy processes with $\alpha\in(1,2)$ in fractional Sobolev spaces. By utilizing a separation of time-scales,
we explored the  dynamics of the solution $u(t)$ to equation \eqref{GinzburgLandau} on the natural slow time-scale of order $\varepsilon^{-2}$ in the limit $\varepsilon\rightarrow0$.
Here $(\ref{GinzburgLandau})$ was reduced to slow dynamics
on a dominant pattern coupled to dynamics on a fast time scale, which provided an effective tool for the qualitative analysis of the dynamical behaviors.
Our main result in Theorem \ref{MR} stated that near the change of stability, i.e., for small $\varepsilon>0$, the dynamics of \eqref{GinzburgLandau} is well approximated by the amplitude equation \eqref{v} under appropriate conditions.
In order to obtain the error estimates of the approximation result, we introduced the moment inequality \eqref{Lmoment}. The accuracy for those estimations were quantified by $p$-moment with $p\in(0,\alpha)$. The amplitude equation offered a benefit of dimension reduction in characterizing the qualitative properties of stochastic dynamics and detecting rigorously the stochastic bifurcation.

 Let us comment here briefly on possible extensions of those results. We focused on the case of infinite dimensional multiplicative noise with $G(0)=0$. It should be a straightforward modification of our
 analysis to treat additive noise
 or noise with $G(0)\not=0$.
 We have a slighly different scaling of the noise in that case,
 but the general result will be similar.
 The L\'evy noise $L^{\alpha}$ was assumed to be $\alpha$-stable and symmetric in this paper. Quantifying SPDEs driven by general L\'evy processes and  examining the impact of noise on system's dynamics would be of particular useful for scientific computation and further analysis. Moreover, it would be interesting to extend the present considerations to SPDEs on unbounded domains that are intensely studied over the past few years, cf. \cite{BB,BBS}. Such studies are currently in progress and will be reported in future publications.
 \medskip

 \textbf{DATA AVAILABILITY}

The data that support the findings of this study are openly
available in GitHub, Ref. \cite{Y21}.

\medskip
{\it Acknowledgements.}\, The authors are happy to thank Guido Schneider, Haitao Xu and Jinqiao Duan for fruitful discussions on dynamical
systems and stochastic differential equations driven by L\'evy motions. Moreover, we thank Markus Riedle for pointing out reference \cite{Rnew}.






\renewcommand{\theequation}{\thesection.\arabic{equation}}
\setcounter{equation}{0}

\section*{References}


\end{document}